\newtheorem{theorem}{Theorem}[section]
\newtheorem{lemma}[theorem]{Lemma}
\newtheorem{proposition}[theorem]{Proposition}
\theoremstyle{definition}
\newtheorem{definition}[theorem]{Definition}
\theoremstyle{remark}
\newtheorem{remark}[theorem]{Remark}
\newcommand{\nnb}{\nonumber}
\newcommand \del \partial
\newcommand \be {\begin{equation}}
\newcommand \ee {\end{equation}}
\newcommand \bes {\begin{equation*}}
\newcommand \ees {\end{equation*}}
\numberwithin{equation}{section}
\def\ub{\underline{u}}
\def\Lb{\underline{L}}
\def\Cb{\underline{C}}
\def\Eb{\underline{E}}
\def\Qb{\underline{Q}}
\def\fb{\underline{f}}
\def\Pb{\underline{P}}
\def\po{\overline{\partial}}
\def\Eo{\overline{E}}
\def\io{\overline{i}}
\def\jo{\overline{j}}
\def\deo{\overline{\delta}}
\def\p{\partial}
\newcommand{\di}{\mathrm{d}} 
\newcommand \bei  {\begin{itemize}}
\newcommand \eei {\end{itemize}}
\begin{document}
\title[Faddeev model with large data]{Global existence of smooth solution to evolutionary Faddeev model with short-pulse data}

\vspace{10mm}
\author[S. Luo]{Shaoying Luo}
\address{School of Statistics and Data Science, Ningbo University of Technoligy, Ningbo 315000, China.}\email{ luoshaoying@nbut.edu.cn}

\author[J. Wang]{Jinhua Wang}
\address{School of Mathematical Sciences, Xiamen University, Xiamen 361005, China.}\email{ wangjinhua@xmu.edu.cn}

\author[C. Wei]{Changhua Wei}
\address{Corresponding author: Department of Mathematics, Zhejiang Sci-Tech University, Hangzhou, 310018, China.\quad Nanbei Lake Institute for Artificial Intelligence in Medicine, Jiaxing, 314300, China.}\email{chwei@zstu.edu.cn}

\date{}

\begin{abstract}
This paper is concerned with the Cauchy problem of the evolutionary Faddeev model, a system that maps from the Minkowski space $\mathbb{R}^{1+3}$ to the unit sphere $\mathbb{S}^2$. The model is a system of nonlinear wave equations whose nonlinearities exhibit a null structure and include semilinear terms, quasilinear terms, and the unknowns themselves. By considering a class of large initial data (in energy norm) of the short pulse type, we prove that the evolutionary Faddeev model admits a globally smooth solution via energy estimates. The main result is achieved through the selection of appropriate multipliers that are specially adapted to the geometry of the system.
\end{abstract}

\maketitle
	\noindent{\bf Key words and phrases}: Evolutionary Faddeev model; global solution; short-pulse; multipliers; energy estimates.
\vskip 3mm

\noindent{\bf 2010 Mathematics Subject Classification}: 35L40, 35L65.

\tableofcontents


\section{Introduction}\label{sec:1}

In quantum field theory, the Faddeev model \cite{Fad1, Fad2} was proposed to model the elementary heavy particles by topological solitons, and may be viewed as a constrained or refined Skyrme model \cite{C-M}. It can be identified with a map $\mathbf{n}$ from the Minkowski spacetime $(\mathbb R^{1+n}, g)$ to the unit sphere $\mathbb S^2$, where $g = diag(-1, 1, ... ,1)$ is the standard Minkowski metric.

Denote an arbitrary point in $\mathbb R^{1+n}$ by $(t,x) = (x^\alpha),  0\leq \alpha \leq n$. The Lagrangian density governing the evolution of the fields $\mathbf{n}=\mathbf{n}(x^{\alpha})$ is given by
\be\label{1-L}
\mathcal{L}(\mathbf{n})= -\frac{1}{2}\p_\alpha \mathbf{n} \cdot \p^\alpha \mathbf{n}- \frac{1}{4}\left(\p_\alpha \mathbf{n} \wedge \p_\beta \mathbf{n}\right) \cdot (\p^\alpha \mathbf{n} \wedge \p^\beta \mathbf{n}),
\ee
where the repeated indices means summation over their ranges in the whole paper, $\wedge$ denotes the exterior product.

The Faddeev model can be characterized variationally as critical points of the action integral
 \begin{equation}\label{Action}
 	\mathcal{A}(\mathbf{n})=\int_{\mathbb R\times \mathbb R^n}\mathcal{L(\mathbf{n})}dtdx.
 \end{equation}

 The associated Euler–Lagrange equations of motion are
\be\label{1-EL}
\mathbf{n} \wedge \p_\alpha\p^\alpha \mathbf{n}-\left(\p_\alpha [ \mathbf{n} \cdot ( \p^\alpha \mathbf{n} \wedge \p^\beta \mathbf{n})]\right) \p_\beta \mathbf{n} = 0.
\ee
See Faddeev \cite{Fad1, Fad2, Fad3} and Lin and Yang \cite{L-Y} and references therein.

The Faddeev model holds significantly importance in the area of quantum field theory and presents a multitude of intriguing and challenging mathematical problems. For instance, one can refer to the works cited in \cite{C-M, Esteban, M-1, M-2, Riv, Ryb, Skyrme, Ward}. Researchers such as Lin and Yang \cite{L-Y-1, L-Y-2, L-Y-3, L-Y, L-Y-4} and Faddeev \cite{Fad3} have delved into a wealth of fascinating results in the mathematical exploration of the static Faddeev model. However, when it comes to the evolutionary Faddeev model, which is characterized by its unusual quasi-linear wave equations that feature null and double null structures, along with semilinear and quasilinear terms, there are only a few findings. To the best of our knowledge, using Klainerman's vector field method, Lei, Lin, and Zhou \cite{Lei} were the first to demonstrate global well-posedness results in Sobolev spaces for the evolutionary Faddeev model, under the condition that the initial data are sufficiently small, smooth, and have compact support. Without relying on the assumption of compact support, Dong and Lei \cite{Dong-Lei} revisited the two-dimensional Faddeev model and managed to obtain global existence of smooth solutions through an enhanced $L^2$ estimate. Geba et al. \cite{Geba} showed that for the 2+1-dimensional equivariant Faddeev model, small initial data in critical Besov spaces evolve into global solutions that scatter. Creek \cite{Creek} and Geba and Grillakis \cite{Geba-2} established the global well-posedness of the (1+2)-dimensional equivariant Faddeev model for large data. Most recently, Zha, Liu, and Zhou \cite{Zha} investigated the global nonlinear stability of geodesic solutions of the evolutionary Faddeev model, which represents a class of nontrivial and large solutions.

In this paper, we focus on the evolutionary Faddeev model with a specific class of large initial data, characterized by the short pulse type in terms of energy norm. The concept of this kind of large initial data was pioneered by Christodoulou in his groundbreaking work \cite{Ch D}, where he established the global existence of solutions to the Einstein vacuum equations. This foundational work was subsequently expanded upon by Klainerman and Rodnianski \cite{K-R}, who introduced the concept of relaxed propagation estimates.
Within the domain of quasilinear wave equations, Christodoulou \cite{Ch D}, followed by Speck \cite{Speck} and Speck et al. \cite{Speck2}, delineated a geometric mechanism for shock formation through an in-depth examination of the corresponding Lorentzian metric. Miao and Yu \cite{M-Y} explored a quasilinear wave equation with cubic nonlinearity in the context of large data, which defies the double null condition in Minkowski geometry, and uncovered a similar mechanism for shock formation. In a recent development, when the equation exhibits a double null structure, Wang and Wei \cite{W-W} demonstrated the global existence of smooth solutions to the relativistic membrane equation with short-pulse initial data.
Furthermore, Ding, Xin, and Yin \cite{Ding1, Ding2, Ding3, Ding4, Ding5} investigated the global existence of various quasilinear wave equations in two, three, and four dimensions, providing further insights into the behavior of these equations under large initial data conditions.

\subsection{Main results}

Since $\mathbf{n}$: $\mathbb R^{1+n} \rightarrow \mathbb S^2$ is the map from the Minkowski space to the unit sphere in $\mathbb R^3$, we denote
\be\label{1-n}
\mathbf{n} = (\cos\theta \cos\phi, \cos\theta \sin\phi, \sin\theta ).
\ee
Substituting \eqref{1-n} into \eqref{1-L}, the Lagrangian density becomes \bes
\mathcal{L}(\theta, \phi) = -\frac{1}{2} Q(\theta, \theta) - \frac{1}{2} \cos^2 \theta \, Q(\phi, \phi) -\frac{1}{4} \cos^2 \theta \, Q^{\alpha\beta}(\theta, \phi) Q_{\alpha\beta}(\theta, \phi)
\ees
where the null forms take
\be\label{null Q}
Q(\xi, \chi) = g^{\alpha\beta} \p_\alpha \xi \p_\beta \chi = \p^\alpha \xi \p_\alpha \chi
\ee
and
\bes
Q_{\alpha\beta}(\xi, \chi) = \p_\alpha \xi \p_\beta \chi - \p_\beta \xi \p_\alpha \chi.
\ees

The Euler Lagrange equations are given by
\be\label{Eq}
\left\{
  \begin{aligned}
  &\Box \theta = -\frac{1}{2} \sin (2\theta) \, Q(\phi, \phi) + \frac{1}{4} \sin (2\theta) \,  Q^{\alpha\beta}(\theta, \phi) Q_{\alpha\beta}(\theta, \phi) + \frac{1}{2} \cos^2 \theta Q^{\alpha\beta} (\phi, Q_{\alpha\beta} (\theta, \phi) ), \\
  &\Box \phi = \sin^2 \theta \, \Box \phi + \sin (2\theta) \, Q(\theta, \phi) + \frac{1}{2} \cos^2 \theta Q^{\alpha\beta} (\theta, Q_{\alpha\beta} (\phi, \theta)),
  \end{aligned}
\right.
\ee
where $\Box=-\partial_t^2+\sum_{i=1}^3\partial_{x^{i}}^2$.

Let $r = \sqrt{(x^1)^2 + (x^2)^2+ (x^3)^2}$ and $\omega$ denote the usual radial and angular coordinates on $\mathbb R^3$. $\nabla$ stands for the spatial derivatives $\p_i, i = 1, 2, 3.$ Denote the null coordinates of the Minkowski metric $g$ by
\bes
u = \frac{1}{2} (t - r), \quad \ub = \frac{1}{2} (t + r).
\ees
We use $C_u$ and $\Cb_{\ub}$ to denote the level surface of $u$ and $\ub$, respectively. The intersection of $C_u$ and $\Cb_{\ub}$ is a 2-sphere with radius $\ub-u$, denoted by $S_{u, \ub}$.
In the context of Minkowski space, the double null foliation is characterized by the outgoing null foliation  $\{C_u|u \in \mathbb R\}$ and the incoming null foliation $\{\Cb_{\ub}|\ub \in \mathbb R\}$. The corresponding null vectors are
\bes
L = \p_{\ub} = \p_t + \p_r, \quad \Lb = \p_u = \p_t - \p_r.
\ees

Our initial data is similar to that in \cite{M-P, W-W}. We divide $\Sigma_1:=\{t=1\}$ into three parts (see Figure 1)
\begin{figure}
	\centering
	\includegraphics[width=6.0in]{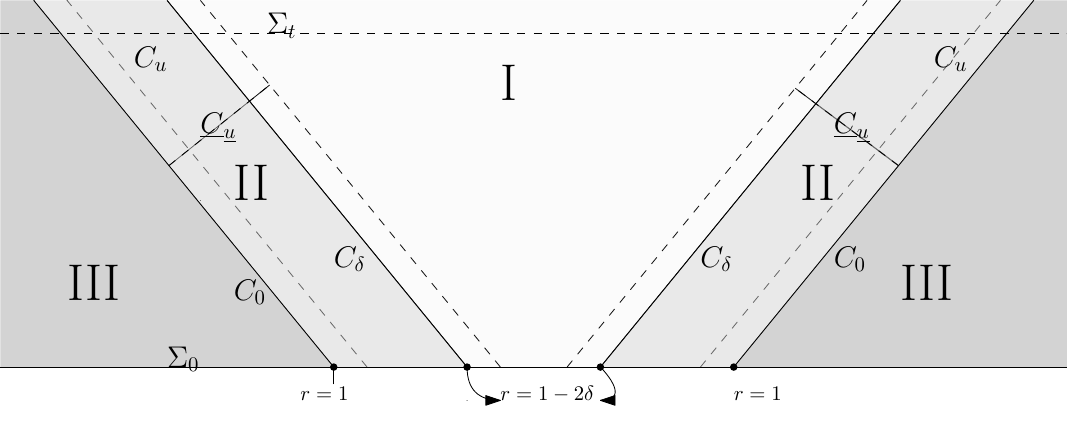}
	\caption{The foliation}
	\label{fig:foliation}
\end{figure}

\bes
\{t=1\} = B_{1-2\delta } \cup  (B_1 - B_{1-2\delta } ) \cup (\mathbb R^3 - B_1),
\ees
where $B_r$ denotes the ball centred at the origin with radius $r$ and $\delta$ a small positive constant which will be determined later.

Prescribed \eqref{Eq} with the following initial data
\bes
(\theta, \phi, \p_t \theta, \p_t \phi)|_{t=1} = (\theta_0, \phi_0, \theta_1, \phi_1),
\ees
which satisfies

On $B_{1-2\delta } \cup (\mathbb R^3 - B_1)$, $\theta_i =\phi_i = 0\ \ (i=0, 1)$.

On $B_1 - B_{1-2\delta }$, $\theta_i (r, \omega), \phi_i (r, \omega)\ \ ( i=0, 1)$ are smooth functions supported in $ r \in ( 1-2\delta, 1)$ and satisfy the constraints
\be\label{Con1}
\Vert \slashed{\nabla}^l (\delta \p_r)^k(\theta_1 +\p_r \theta_0)\Vert _{L^{\infty}(B_1 - B_{1-2\delta})} + \Vert \slashed{\nabla}^l (\delta \p_r)^k(\phi_1 +\p_r \phi_0)\Vert _{L^{\infty}(B_1 - B_{1-2\delta})} \leq C_{k,l}\delta,
\ee
\be\label{Con2}
\begin{split}
\Vert \slashed{\nabla}^l & \, (\delta \p_r)^k \theta_0 \Vert _{L^{\infty}(B_1 - B_{1-2\delta})} + \delta \Vert \slashed{\nabla}^l (\delta \p_r)^{k-1} \theta_1 \Vert _{L^{\infty}(B_1 - B_{1-2\delta})} \\
+ & \Vert \slashed{\nabla}^l (\delta \p_r)^k \phi_0 \Vert _{L^{\infty}(B_1 - B_{1-2\delta})} + \delta \Vert \slashed{\nabla}^l (\delta \p_r)^{k-1} \phi_1 \Vert _{L^{\infty}(B_1 - B_{1-2\delta})} \leq \hat{C}_{k,l}\delta,
\end{split}
\ee
\be\label{Con3}
\Vert \slashed{\nabla}^l (\p_t + \p_r)^k (\delta \p_r)^m \theta \Vert _{L^{\infty}(B_1 - B_{1-2\delta})} + \Vert \slashed{\nabla}^l (\p_t + \p_r)^k (\delta \p_r)^m \phi \Vert _{L^{\infty}(B_1 - B_{1-2\delta})} \leq C_{k,l,m}\delta,
\ee
where $\slashed{\nabla}$ denotes the covariant derivative (associated to the induced metric from the flat Lorentzian metric) on the sphere with constant $t$ and $r$, and $m, k, l$ are non-negative integers.

In the following, we always use $f \lesssim g$ to denote $f \leq Cg$ for some universal and positive constant $C$ and $f \sim g$ to denote $f \lesssim g$ and $g \lesssim f$.

\begin{remark}
During the propagation of our initial data, the hyperbolicity of \eqref{Eq} can be maintained since we have $| \slashed{\nabla} \theta | \lesssim \delta^{\frac{3}{4}} $, $| L \theta | \lesssim \delta$, $| \Lb \theta | \lesssim 1$ and $| \slashed{\nabla} \phi | \lesssim \delta^{\frac{3}{4}} $, $| L \phi | \lesssim \delta$, $| \Lb \phi | \lesssim 1$ according to Lemma \ref{Sobolev}.
\end{remark}

We now formulate the main result of our paper.
\begin{theorem}\label{main-theorem}
There exists a positive constant $\delta_0$, such that the Cauchy problem \eqref{Eq} with the short pulse initial data \eqref{Con1}-\eqref{Con3} admits a unique and globally smooth solution in $[1, +\infty ) \times \mathbb R^3$ providing that $0 < \delta < \delta_0$.
\end{theorem}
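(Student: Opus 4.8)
The plan is to run a continuity/bootstrap argument in the spacetime slab swept out by the outgoing null cones $C_u$ issuing from the support of the pulse, which we foliate by the double null structure $(u,\ub)$ of the Minkowski metric. Outside this slab the data vanish, so by finite speed of propagation and uniqueness of the trivial solution $\mathbf n\equiv(1,0,0)$ it suffices to control $(\theta,\phi)$ inside it. First I would introduce a hierarchy of anisotropic weighted energies on $C_u$ and $\Cb_{\ub}$ that encode the short-pulse scaling of \eqref{Con1}--\eqref{Con3}: each good null derivative $L=\p_{\ub}$ applied to $\theta$ or $\phi$ is accompanied by a gain of $\delta$, each angular derivative $\slashed{\nabla}$ by a gain of roughly $\delta^{3/4}$, while the bad derivative $\Lb=\p_u$ carries no gain. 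One then posits bootstrap bounds for these energies up to a fixed commutation order $N$, consistent with the initial data, and aims to improve them strictly.

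The second step recovers pointwise information: feeding the bootstrap energy bounds into the Klainerman--Sobolev inequality on the spheres $S_{u,\ub}$ (Lemma~\ref{Sobolev}) yields $|\slashed{\nabla}\theta|,|\slashed{\nabla}\phi|\lesssim\delta^{3/4}$, $|L\theta|,|L\phi|\lesssim\delta$, $|\Lb\theta|,|\Lb\phi|\lesssim 1$, as recorded in the Remark. Consequently $\sin(2\theta)$ and $\sin^2\theta$ are $O(\delta^{3/4})$ while $\cos^2\theta\sim 1$, so \eqref{Eq} is a small perturbation of $\Box$. The quasilinear term $\sin^2\theta\,\Box\phi$ is moved to the left-hand side, so that the principal operator in the $\phi$-equation becomes $\cos^2\theta\,\Box$; since $\cos^2\theta$ stays uniformly close to $1$, this perturbs the characteristic cones only mildly and does not disturb the flat double null foliation used throughout, i.e. hyperbolicity is preserved.

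The core is the energy estimate, and here the geometry of the target $\mathbb S^2$ dictates the multiplier. Writing the round metric in the coordinates \eqref{1-n} as $d\theta^2+\cos^2\theta\,d\phi^2$, the correct energy weights the $\phi$-equation by $\cos^2\theta$: one tests the $\theta$-equation against a multiplier $\sim w(u,\ub)\,\Lb\theta$ and the $\phi$-equation against $\sim w(u,\ub)\cos^2\theta\,\Lb\phi$, with the scalar weight $w$ chosen to make the flux integrals across $C_u$ and $\Cb_{\ub}$ have favorable signs; the extra terms generated by $L(\cos^2\theta)$ are quadratic in small quantities and get absorbed. Commuting \eqref{Eq} with the fields $\{L,\Lb,\Omega\}$, every nonlinear right-hand side is a sum of null forms $Q(\cdot,\cdot)$ and double null forms $Q_{\alpha\beta}(\cdot,\cdot)$, possibly nested as in $Q^{\alpha\beta}(\phi,Q_{\alpha\beta}(\theta,\phi))$, times bounded functions of $\theta$. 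The decisive algebraic fact is that $Q(\xi,\chi)=-\tfrac12\bigl(L\xi\,\Lb\chi+\Lb\xi\,L\chi\bigr)+\slashed{\nabla}\xi\cdot\slashed{\nabla}\chi$ contains no $\Lb\xi\,\Lb\chi$ term, so every nonlinear factor carries at least one $L$ or one angular derivative, hence at least one power of $\delta$ (or $\delta^{3/4}$); the antisymmetric forms $Q_{\alpha\beta}$ gain even more. Inserting the bootstrap pointwise bounds into the resulting spacetime error integrals and applying Gr\"onwall along the foliation produces energy bounds that strictly improve the bootstrap constants once $0<\delta<\delta_0$.

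The step I expect to be hardest is controlling the top-order error terms while keeping everything $o(1)$ in $\delta$: one must verify that the commutators of the quasilinear operator $\cos^2\theta\,\Box$ with $\{L,\Lb,\Omega\}$, and the highest-order pieces of the nested null forms such as $Q^{\alpha\beta}(\theta,Q_{\alpha\beta}(\phi,\theta))$, do not feed back into the energy identity with coefficients that are merely $O(1)$; this is precisely what forces the $\cos^2\theta$-adapted multiplier and the specific allocation of $\delta$-weights among $L$, $\Lb$, $\slashed{\nabla}$. Once the bootstrap closes, the energies are bounded uniformly on every $C_u$; combining this with local well-posedness and a standard continuation criterion extends the solution to all of $[1,+\infty)\times\mathbb R^3$, and uniqueness for this quasilinear system is classical.
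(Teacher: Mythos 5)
Your proposal captures the broad philosophy of the paper (bootstrap on $\delta$-anisotropic null energies, $\cos^2\theta$-symmetrization of the system, exploitation of the null and double null structure), but it has two genuine gaps. First, the reduction to ``the slab swept out by the outgoing null cones issuing from the support of the pulse'' is not valid. Finite speed of propagation only forces the solution to vanish for $u<0$ and for $\ub<1-\delta$; the forward domain of influence of the annulus $B_1\setminus B_{1-2\delta}$ contains the entire interior region $\{u\ge\delta\}$, where the solution is generically nonzero. Your single $(u,\ub)$-foliated bootstrap cannot close there: the Region~II argument crucially uses that $u$ ranges over an interval of length $\delta$ (every $\di u$-integration costs only a factor $\delta$), which fails once $u$ is unbounded. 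The paper instead first shows that the data induced on $C_\delta$ is small with improved decay, $|\Lb Z^k(\theta,\phi)|\lesssim\delta^{3/4}|\ub|^{-1}$, by integrating an ODE along $L$ on $C_\delta$, and then solves a separate small-data Goursat problem in the interior with multiplier $\p_t$, the Klainerman--Sobolev inequality and $t$-decay. Without some version of this second stage your argument does not produce a global solution on all of $[1,\infty)\times\mathbb R^3$.

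Second, your multiplier $w(u,\ub)\,\Lb$ (with the $\cos^2\theta$ weight) is not adequate for the quasilinear structure. A $\Lb$-type multiplier alone does not control the outgoing flux $\Vert L(\cdot)\Vert_{L^2(C_u)}$, which is needed to close the hierarchy (the paper must estimate $\Eb_{\le N}$ before $E_{\le N}$ precisely because of the borderline term $\frac1r L\phi_{k,l}\,\Lb\phi_{k,l}$), so both an $L$-type and a $\Lb$-type multiplier are required. More importantly, a scalar weight cannot fix the sign of the boundary terms generated by the quasilinear principal parts $g_i^{\alpha\beta}\p_\alpha\p_\beta$: the multipliers must be causal for the effective metric $g(\p\phi)$, which forces the corrections $\tilde L=L+((L\theta)^2+(L\phi)^2)\Lb$ and $\tilde\Lb=\Lb+((\Lb\theta)^2+(\Lb\phi)^2)L$ used in the paper. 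Finally, the order-$(k{+}2)$ cross terms $h_1^{\alpha\beta}\p_\alpha\p_\beta\phi_{k,l}$ and $h_2^{\alpha\beta}\p_\alpha\p_\beta\theta_{k,l}$ cannot be absorbed by smallness; they are eliminated only by an exact cancellation using $h_1^{\alpha\beta}=h_2^{\beta\alpha}$ together with applying the \emph{same} vector field $\tilde L$ (resp.\ $\tilde\Lb$) to both equations. You correctly flag the top-order terms as the hardest step, but attributing their resolution solely to the $\cos^2\theta$ weight and the $\delta$-allocation leaves the argument incomplete.
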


\subsection{Strategy of the Proof}

Due to the finite speed of propagation of wave equations, \eqref{Eq} has a trivial solution in the region III. Then our proof focus on the global existence of the large solution in Region II and small solution in Region I.




\subsubsection{Multipliers adapted to the Faddeev geometry}

In the large data region II, our principal strategy for proving global existence mainly depends on energy estimates. The selection of suitable multipliers is important, as it significantly simplifies our calculations. Specifically, in the context of the Faddeev model—a system of quasilinear wave equations—the multipliers $L$ and
$\Lb$ must be chosen with great care. Their role is to generate energies of a definite sign, which is essential for effectively managing the intrinsic complexities of the system’s dynamics.

Our choice of multipliers is inspired by \cite{W-W}. We rewrite the first equation of \eqref{Eq} as
\begin{align}\label{multi}
& \, \Box \theta  - \cos^2 \theta \big(\p^\alpha \phi \, \p^\beta \phi - g^{\alpha \beta} Q(\phi, \phi)\big)\p_\alpha \p_\beta \theta  -\cos^2 \theta \big( g^{\alpha \beta} Q(\theta, \phi)- \p^\alpha \theta \, \p^\beta \phi \big)  \p_\alpha \p_\beta  \phi \nnb \\
= & \, -\frac{1}{2} \sin (2\theta) \, Q(\phi, \phi) + \frac{1}{4} \sin (2\theta) \,  Q^{\alpha\beta}(\theta, \phi) Q_{\alpha\beta}(\theta, \phi) \nnb \\
&+ \, \cos^2 \theta \Big( (\,\p^\alpha \theta \, \p^\beta \p_\beta \theta \, - \p^\beta \theta \, \p^\alpha \p_\beta \theta ) \p_\alpha   \phi + (\,\p^\beta \theta \, \p^\alpha \p_\beta \phi \, - \p^\alpha \theta \, \p^\beta \p_\beta \phi ) \p_\alpha   \theta \Big).
\end{align}
 In order to find suitable multipliers, we can rewrite \eqref{multi} as a geometric wave equation
\bes
\Box_{g(\p \phi)} \theta=N(\theta, \phi)
\ees
where
\be
g^{\alpha\beta}(\p \phi) = g^{\alpha\beta} - \cos^2 \theta \big(\p^\alpha \phi \, \p^\beta \phi - g^{\alpha \beta} Q(\phi, \phi)\big).
\ee
In the subsequent discussion, we refer to this as the geometric aspect of the Faddeev model. The natural selection for the multipliers is $(-2D\ub, -2Du)$, which are null vectors and are defined by
\bes
-2D\ub = -2 g^{\alpha\beta}(\p \phi) \p_\alpha \ub \, \p_\beta, \quad -2Du = -2 g^{\alpha\beta}(\p \phi) \p_\alpha u \, \p_\beta.
\ees

However, the aforementioned multipliers introduce a multitude of additional higher-order terms, complicating the analysis. To streamline the computations, we opt to use the leading parts of the expressions $-2D\ub$ and $-2Du $ , which are $ L + (L \phi)^2 \Lb$ and $ \Lb + (\Lb \phi)^2 L$ as our multipliers. We can verify that these modified multipliers are causal with respect to $g(\p \phi)$, which is crucial for ensuring the positivity of the energies we aim to derive. This approach maintains the causal structure while simplifying the energy estimates.

By employing these leading parts as multipliers, we can perform a similar analysis on the equations that $\phi$ satisfies, leveraging the causality and positivity properties to our advantage. This methodological choice not only simplifies the algebraic complexity but also provides a clearer pathway to establishing the global well-posedness of the solutions in the context of the Faddeev model.

Unlike the scalar wave equation previously discussed, the Faddeev model consists of a system of coupled wave equations. The cross terms, such as $-\cos^2 \theta \big( g^{\alpha \beta} Q(\theta, \phi)- \p^\alpha \theta \, \p^\beta \phi \big)  \p_\alpha \p_\beta  \phi$ also contain the highest order derivatives, which pose anther challenge in the analysis. To achieve a certain symmetry that can help cancel out these highest order derivatives of order
$k+2$, we will employ the following two modified multiplier vector fields
\bes
\tilde{L} = L + ((L\theta)^2 + (L\phi)^2)\Lb, \quad \tilde{\Lb} = \Lb + ((\Lb\theta)^2 + (\Lb\phi)^2)L.
\ees

The selection of these multiplier vector fields is strategic, aiming to exploit the symmetries and structures within the Faddeev model to our advantage. By carefully constructing these multipliers, we can effectively manage the complexity introduced by the cross terms and the highest order derivatives, which is essential for proving global existence of smooth solutions in the short-pulse region II.

\subsubsection{A globally small solution in region I}

Based on the energy estimates in region II, we first demonstrate that the incoming energy is small. Leveraging the good derivatives of the solution, which exhibit smallness and better decay, we have $$ \big|\po (\p^l \Gamma^{k-l} (\theta,\phi))  \big|_{L^{\infty}(C_\delta)} \lesssim \delta^{\frac{3}{4}} |\ub|^{-2},\quad 0 \leq l \leq k,$$ and thanks to the zero initial data assumption on $S_{\delta,1-\delta}$,
the property of the bad derivatives of the solution can be improved to $$\big|\Lb (\p^l \Gamma^{k-l} (\theta,\phi))  \big|_{L^{\infty}(C_\delta)} \lesssim \delta^{\frac{3}{4}} |\ub|^{-1}, \quad 0 \leq l \leq k$$ by integrating the whole system in terms of the null coordinates along the direction $L$ on $C_{\delta}$.

Second, building on the favorable smallness and decay properties of the solution on $C_{\delta}$, we address the initial boundary value (Goursat) problem in region I. Given that the problem is inherently small, we can partition the space-time domain by hypersurfaces $\{t=const.\}$. By selecting $\partial_t$
as the multiplier and utilizing the Klainerman-Sobolev inequality, the null structure of the Faddeev model, and the smallness of the solution on the boundary, we are able to derive global energy estimates.

\subsection{Arrangement of the paper} Our paper is structured as follows: Section \ref{sec:2} presents the essential notations and lemmas required for our analysis. Section \ref{sec3} focuses on the proof of the global existence of large solutions in Region II. In Section \ref{sec4}, we elucidate the geometric interpretation of short-pulse data, which posits that the significant energy propagates outward while the energy along the incoming direction remains small, implying that the solution on $C_\delta$ is of small magnitude. Section \ref{sec5} is dedicated to demonstrating the global existence of the smooth solution emanates from the solution on $C_\delta$.

\section{Preliminaries}\label{sec:2}

\subsection{Notations}

Throughout this paper, we use Greek indices $\alpha, \beta, \ldots$ to denote values ranging from 0 to 3, and Latin letters $i, j, k, \ldots$ to denote values ranging from 1 to 3. Additionally, we employ Einstein's summation convention, which means that repeated upper and lower indices are summed over their ranges.
Introduce the Lorentz vector fields $\mathcal{Z} = \{\p, \Gamma \} = \{ \p_\mu, \Omega_{0i}, \Omega_{ij}, S\}$, where $\Omega_{0i} = t\p_i + x_i\p_t$ is the Lorentz boost, $ \Omega_{ij} = x_i\p_j - x_j\p_i$ is the rotation and $S = t\p_t + r \p_r$ is the scaling. If we denote $|\Omega \psi| = \sum |\Omega_{ij} \psi|$,  we have
\be\label{omega3}
|\Omega \psi| \sim r \,| \slashed{\nabla} \psi |.
\ee

Recall the two null vectors $L = \p_t + \p_r $ and $\Lb = \p_t - \p_r$. We will introduce a global frame induced by the projection of $\p_i (i=1,2,3)$ onto the tangent space of sphere $\mathbb S^2$, which is denoted by
\be\label{po}
\po_i = \p_i - \omega_i \omega^j \p_j =  \p_i - \omega_i \p_r, \quad \omega_i = \frac{x_i}{r}.
\ee
Noting that
\be\label{po-Omega}
\po_i = \frac{\omega^j}{r} \Omega_{ji}.
\ee
We define the ``good derivatives'' $\{\po \} = \{L, \po_1, \po_2,\po_3\}$, which spans the tangent space of the forward light cone $\{ t+r = constant\}$.

\subsection{Null forms}
In this subsection, we recall the definition of the null forms and their properties
\begin{definition}
A real-valued quadratic form $\mathcal{Q}$ defined on $\mathbb R^{1+n}$ is termed a null form if for any null vector $\xi \in \mathbb R^{1+n}$, it satisfies  $$\mathcal{Q}(\xi, \xi) = 0.$$
\end{definition}

 Denote the Lie bracket as
\bes
[Z, \mathcal{Q}] (\xi, \chi) = Z \mathcal{Q}(\xi, \chi) - \mathcal{Q} (Z\xi, \chi) - \mathcal{Q} (\xi, Z\chi),
\ees
where $Z \in \mathcal{Z}$ is an arbitrary Lorentz field.

It is easy to check that
\be\label{null1}
[\p, \mathcal{Q}] = 0, \quad [S, \mathcal{Q}] = -2\mathcal{Q},
\ee
and
\be\label{null2}
\begin{aligned}
& [\Omega_{0k}, Q] = 0, \quad [\Omega_{0k}, Q_{0i}] = Q_{ik}, \quad [\Omega_{0k}, Q_{ij}] = -\delta_{ik}Q_{0j} + \delta_{jk}Q_{0i},\\
& [\Omega_{kl}, Q] = 0, \quad [\Omega_{kl}, Q_{0i}] = -\delta_{ik}Q_{0l} + \delta_{il}Q_{0k}, \\
& [\Omega_{kl}, Q_{ij}] = -\delta_{ik}Q_{lj} + \delta_{il}Q_{kj} + \delta_{jk}Q_{li} - \delta_{jl}Q_{ki},
\end{aligned}
\ee
where $\delta_{ij}$ is the kronecker delta function.

Based on above relations, we can get
\be\label{null}
Z \mathcal{Q}(\xi, \chi) = \mathcal{Q} (Z\xi, \chi) + \mathcal{Q} (\xi, Z\chi) + \tilde{\mathcal{Q}}(\xi, \chi),
\ee
where $\tilde{\mathcal{Q}}$ is a linear combination of null forms.

The double null form in our equation takes $Q^{\alpha\beta} (\psi, Q_{\alpha\beta} (\xi, \chi))$. In fact, it contains cubic nonlinearities having better decay properties. The following lemma shows that the commutation of any Lorentz field with the double null form still preserves the double null structure.

\begin{lemma}
For any Lorentz field $Z \in \mathcal{Z}$,
\begin{align}
Z Q^{\alpha\beta} (\psi, Q_{\alpha\beta} (\xi, \chi)) = & \, Q^{\alpha\beta} (Z\psi, Q_{\alpha\beta} (\xi, \chi)) + Q^{\alpha\beta} (\psi, Q_{\alpha\beta} (Z\xi, \chi)) + Q^{\alpha\beta} (\psi, Q_{\alpha\beta} (\xi, Z\chi)) \nnb\\
& + c\, Q^{\alpha\beta} (\psi, Q_{\alpha\beta} (\xi, \chi)) \label{double},
\end{align}
where $c=-4$ when $Z=S$, and $c=0$ for other Lorentz fields.
\end{lemma}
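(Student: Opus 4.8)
The plan is to reduce the identity \eqref{double} to the basic commutation rules \eqref{null1}--\eqref{null2} by treating the double null form $Q^{\alpha\beta}(\psi, Q_{\alpha\beta}(\xi,\chi))$ as a contraction of two copies of the antisymmetric null form $Q_{\alpha\beta}$ with the factor $\psi$ in one slot. First I would write out $Z$ acting on the full expression using the Leibniz rule. Since $Q^{\alpha\beta}(\psi, \cdot)$ is built from $\p^\alpha \psi$ contracted against $\p^\beta$ of the second argument (modulo the antisymmetrization), the key observation is that the outer contraction $Q^{\alpha\beta}(\psi, R_{\alpha\beta})$ for an antisymmetric two-tensor $R_{\alpha\beta} = Q_{\alpha\beta}(\xi,\chi)$ is itself exactly a bilinear null-form-type pairing, and one should track how $Z$ distributes over both the $\psi$-contraction and the inner $Q_{\alpha\beta}(\xi,\chi)$.

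The core computation is: apply $Z$ and use \eqref{null} twice. Writing $R_{\alpha\beta} := Q_{\alpha\beta}(\xi,\chi)$, for the inner form we have $Z R_{\alpha\beta} = Q_{\alpha\beta}(Z\xi,\chi) + Q_{\alpha\beta}(\xi,Z\chi) + \tilde{Q}_{\alpha\beta}$ where $\tilde{Q}_{\alpha\beta}$ is the explicit correction from \eqref{null1}--\eqref{null2} (it is $-2Q_{\alpha\beta}$ for $Z=S$, a linear combination of $Q_{0i}, Q_{ij}$-components for boosts and rotations, and zero for translations). For the outer contraction I would establish the analogue of \eqref{null} for the pairing $\psi \mapsto Q^{\alpha\beta}(\psi, R_{\alpha\beta})$: that is, $Z\big(Q^{\alpha\beta}(\psi,R_{\alpha\beta})\big) = Q^{\alpha\beta}(Z\psi, R_{\alpha\beta}) + Q^{\alpha\beta}(\psi, ZR_{\alpha\beta}) + (\text{commutator terms involving contractions of } R)$, where the commutator terms come precisely from $[\Omega_{0k}, \cdot]$ and $[\Omega_{kl}, \cdot]$ acting on the index structure $g^{\alpha\beta}$ and $\p^\alpha \psi$. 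Then I would substitute the inner expansion into the outer one and collect terms: the three "good" terms $Q^{\alpha\beta}(Z\psi, R_{\alpha\beta})$, $Q^{\alpha\beta}(\psi, Q_{\alpha\beta}(Z\xi,\chi))$, $Q^{\alpha\beta}(\psi, Q_{\alpha\beta}(\xi,Z\chi))$ appear directly, and everything left over must be shown to be a multiple of the original double null form.

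\textbf{Main obstacle.} The delicate step is verifying that all the correction terms collapse back into $c\, Q^{\alpha\beta}(\psi, Q_{\alpha\beta}(\xi,\chi))$ rather than producing genuinely new structures. For $Z=S$ this is immediate by degree counting: each of the two $Q_{\alpha\beta}$ factors and the one contraction with $\p\psi$ contributes, and the homogeneities add, giving the scaling weight $-4$ (this matches $c=-4$). The harder case is the boosts $\Omega_{0k}$ and rotations $\Omega_{kl}$: here the individual commutators $[\Omega, Q_{\alpha\beta}]$ from \eqref{null2} produce terms like $-\delta_{ik}Q_{0j}+\delta_{jk}Q_{0i}$, and one must check that when these are contracted through $g^{\alpha\beta}$ against the other (unchanged) $Q_{\alpha\beta}$ factor and against $\p^\alpha\psi$, the Kronecker-delta contractions of the two antisymmetric tensors telescope to zero. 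This is the computation that forces $c=0$ for all non-scaling fields, and it relies crucially on the antisymmetry $Q_{\alpha\beta} = -Q_{\beta\alpha}$ together with the fact that the index swaps induced by $[\Omega,\cdot]$ are themselves antisymmetric in the relevant indices. I expect this cancellation to be a short but somewhat intricate index bookkeeping argument, best organized by treating the "raised-index" double null form $Q^{\alpha\beta}(\psi,R_{\alpha\beta})$ abstractly as $g^{\alpha\gamma}g^{\beta\delta}\p_\gamma\psi\, R_{\alpha\beta}\,\p_\delta(\cdot)$ — no, more precisely as the scalar $g^{\alpha\gamma}g^{\beta\delta} (\p_\alpha\psi\, R_{\gamma\delta} - \p_\gamma\psi\, R_{\alpha\delta})/(\ldots)$ — and checking Lorentz-invariance of this contraction directly, i.e. using that $\Omega$ acts on $g^{\alpha\beta}$ trivially (the metric is $\Omega$-invariant) so only the derivative-index shuffles survive, and those cancel against each other by antisymmetry.
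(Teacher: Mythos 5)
Your proposal is correct and follows essentially the same route as the paper: apply the commutation identity \eqref{null} twice (once to the outer pairing in $\psi$, once to the inner $Q_{\alpha\beta}(\xi,\chi)$), read off $c=-4$ for $Z=S$ from the two scaling commutators, and verify by explicit Kronecker-delta bookkeeping that the boost and rotation corrections cancel via the antisymmetry of $Q_{\alpha\beta}$ (together with the $\alpha,\beta$-symmetry of the double null form). The paper simply carries out the index computation you outline.
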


\begin{proof}
According to \eqref{null}, for any $Z \in \mathcal{Z}$, we have
\begin{align*}
Z Q^{\alpha\beta} (\psi, Q_{\alpha\beta} (\xi, \chi))
= & \, Q^{\alpha\beta} (Z\psi, Q_{\alpha\beta} (\xi, \chi)) + Q^{\alpha\beta} (\psi, Z Q_{\alpha\beta} (\xi, \chi)) + [Z, Q^{\alpha\beta}] (\psi, Q_{\alpha\beta} (\xi, \chi))\\
= & \, Q^{\alpha\beta} (Z\psi, Q_{\alpha\beta} (\xi, \chi)) + Q^{\alpha\beta} (\psi, Q_{\alpha\beta} (Z \xi, \chi)) + Q^{\alpha\beta} (\psi, Q_{\alpha\beta} (\xi, Z \chi))\\
& + Q^{\alpha\beta} (\psi, [Z, Q_{\alpha\beta}] (\xi, \chi)) + [Z, Q^{\alpha\beta}](\psi, Q_{\alpha\beta} (\xi, \chi)).
\end{align*}

If $Z = \p$ or $S$, \eqref{null} follows directly from \eqref{null1}.

If $Z = \Omega_{0k}$, we get
\begin{align*}
& Q^{\alpha\beta} (\psi, [\Omega_{0k}, Q_{\alpha\beta}] (\xi, \chi)) + [\Omega_{0k}, Q^{\alpha\beta}](\psi, Q_{\alpha\beta} (\xi, \chi))\\
= &\, 2 Q^{0i}(\psi, [\Omega_{0k}, Q_{0i}](\xi, \chi)) + 2 [\Omega_{0k}, Q^{0i}](\psi, Q_{0i} (\xi, \chi)) + Q^{ij}(\psi, [\Omega_{0k}, Q_{ij}](\xi, \chi))\\
&\, + [\Omega_{0k}, Q^{ij}] (\psi, Q_{ij}(\xi, \chi))\\
= &\, 2 Q^{0i} (\psi, Q_{ik} (\xi, \chi)) - 2 Q^{ik}(\psi, Q_{0i} (\xi, \chi)) + Q^{ij} (\psi, (-\delta_{ik}Q_{0j} + \delta_{jk}Q_{0i}) (\xi, \chi)) \\
&\, + (\delta^i_{k}Q^{0j} - \delta^j_{k}Q^{0i})(\psi, Q_{ij} (\xi, \chi))\\
= & \, 2 Q^{0i} (\psi, Q_{ik} (\xi, \chi)) - 2 Q^{ik}(\psi, Q_{0i} (\xi, \chi)) - Q^{kj} (\psi, Q_{0j}(\xi, \chi)) + Q^{ik} (\psi, Q_{0i}(\xi, \chi)) \\
&\, + Q^{0j} (\psi, Q_{kj} (\xi, \chi)) - Q^{0i} (\psi, Q_{ik} (\xi, \chi)) \\
= & \, 2 Q^{0i} (\psi, Q_{ik} (\xi, \chi)) - 2 Q^{ik}(\psi, Q_{0i} (\xi, \chi)) + Q^{jk} (\psi, Q_{0j}(\xi, \chi)) + Q^{ik} (\psi, Q_{0i}(\xi, \chi)) \\
&\, - Q^{0j} (\psi, Q_{jk} (\xi, \chi)) - Q^{0i} (\psi, Q_{ik} (\xi, \chi))  \\ = & \;0.
\end{align*}
Here, we used the fact that $Q_{\alpha\beta}$ is anti-symmetric and the double null form $Q^{\alpha\beta} (\psi, Q_{\alpha\beta} (\xi, \chi))$ is symmetric with respect to $\alpha$, $\beta$.

If $Z = \Omega_{kl}$, we can obtain
\begin{align*}
&Q^{\alpha\beta} (\psi, [\Omega_{kl}, Q_{\alpha\beta}] (\xi, \chi)) + [\Omega_{kl}, Q^{\alpha\beta}](\psi, Q_{\alpha\beta} (\xi, \chi))\\
= & \, 2 Q^{0i} (\psi, [\Omega_{kl}, Q_{0i}] (\xi, \chi)) + 2 [\Omega_{kl}, Q^{0i}](\psi, Q_{0i} (\xi, \chi)) + Q^{ij} (\psi, [\Omega_{kl}, Q_{ij}] (\xi, \chi))\\
&\, + [\Omega_{kl}, Q^{ij}](\psi, Q_{ij} (\xi, \chi)) \\
= &\, 2 Q^{0i} (\psi, (-\delta_{ik}Q_{0l} + \delta_{il}Q_{0k}) (\xi, \chi)) + 2 (-\delta^i_{k}Q^{0l} + \delta^i_{l}Q^{0k})(\psi, Q_{0i} (\xi, \chi))\\
&\,  + Q^{ij} (\psi, (-\delta_{ik}Q_{lj} + \delta_{il}Q_{kj} + \delta_{jk}Q_{li} - \delta_{jl}Q_{ki}) (\xi, \chi)) \\
&\, + (-\delta^i_{k}Q^{lj} + \delta^i_{l}Q^{kj} + \delta^j_{k}Q^{li} - \delta^j_{l}Q^{ki})(\psi, Q_{ij} (\xi, \chi))
\\
= & -2 Q^{0k} (\psi, Q_{0l} (\xi, \chi)) + 2 Q^{0l} (\psi, Q_{0k} (\xi, \chi)) - 2 Q^{0l} (\psi, Q_{0k} (\xi, \chi)) + 2 Q^{0k} (\psi, Q_{0l} (\xi, \chi)) \\
& - Q^{kj} (\psi, Q_{lj}(\xi, \chi)) + Q^{lj} (\psi, Q_{kj}(\xi, \chi)) + Q^{ik} (\psi, Q_{li}(\xi, \chi)) - Q^{il} (\psi, Q_{ki}(\xi, \chi))\\
& -Q^{lj} (\psi, Q_{kj}(\xi, \chi)) + Q^{kj} (\psi, Q_{lj}(\xi, \chi)) + Q^{li} (\psi, Q_{ik}(\xi, \chi)) - Q^{ki} (\psi, Q_{il}(\xi, \chi))
\\ = & \;0.
\end{align*}
The lemma follows by combining all the above calculations.
\end{proof}

Similar calculation leads to the following lemma.

\begin{lemma}
For any Lorentz field $Z \in \mathcal{Z}$,
\begin{align}
 & Z \big( Q^{\alpha\beta} (\psi, \zeta) Q_{\alpha\beta} (\xi, \chi) \big) \nnb \\
= & \, Q^{\alpha\beta} (Z\psi, \zeta) Q_{\alpha\beta} (\xi, \chi) + Q^{\alpha\beta} (\psi, Z\zeta) Q_{\alpha\beta} (\xi, \chi) + Q^{\alpha\beta} (\psi, \zeta) Q_{\alpha\beta} (Z\xi, \chi) \nnb \\
& + Q^{\alpha\beta} (\psi, \zeta) Q_{\alpha\beta} (\xi, Z\chi) + c\, Q^{\alpha\beta} (\psi, \zeta) Q_{\alpha\beta} (\xi, \chi),\label{double pro}
\end{align}
where $c=-4$ when $Z=S$, and $c=0$ for other Lorentz fields.
\end{lemma}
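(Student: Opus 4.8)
The plan is to mimic, almost verbatim, the proof of the preceding lemma, using that every $Z\in\mathcal{Z}$ is a first-order vector field and hence a derivation on products. First I would apply the Leibniz rule to write
\begin{equation*}
Z\big(Q^{\alpha\beta}(\psi,\zeta)Q_{\alpha\beta}(\xi,\chi)\big) = \big(ZQ^{\alpha\beta}(\psi,\zeta)\big)Q_{\alpha\beta}(\xi,\chi) + Q^{\alpha\beta}(\psi,\zeta)\big(ZQ_{\alpha\beta}(\xi,\chi)\big),
\end{equation*}
and then expand each of the two factors by \eqref{null}, which produces the commutators $[Z,Q^{\alpha\beta}]$ and $[Z,Q_{\alpha\beta}]$. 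After collecting terms, the four ``clean'' terms of \eqref{double pro} appear automatically, and the whole statement reduces to showing that the remainder
\begin{equation*}
R := [Z,Q^{\alpha\beta}](\psi,\zeta)\,Q_{\alpha\beta}(\xi,\chi) + Q^{\alpha\beta}(\psi,\zeta)\,[Z,Q_{\alpha\beta}](\xi,\chi)
\end{equation*}
equals $c\,Q^{\alpha\beta}(\psi,\zeta)Q_{\alpha\beta}(\xi,\chi)$ with the stated value of $c$.

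Next I would dispose of the cases of $Z$ one by one, exactly as in the previous proof. For $Z=\p_\mu$, the identity $[\p,\mathcal{Q}]=0$ from \eqref{null1} gives $R=0$, hence $c=0$. For $Z=S$, the relation $[S,\mathcal{Q}]=-2\mathcal{Q}$ from \eqref{null1}, applied to both $Q^{\alpha\beta}$ and $Q_{\alpha\beta}$, yields $R=(-2-2)\,Q^{\alpha\beta}(\psi,\zeta)Q_{\alpha\beta}(\xi,\chi)$, i.e. $c=-4$. For the boosts $Z=\Omega_{0k}$ and the rotations $Z=\Omega_{kl}$, I would split the contraction into its $0i$- and $ij$-parts (writing $Q^{\alpha\beta}(\cdots)=2Q^{0i}(\cdots)+Q^{ij}(\cdots)$) and substitute the commutator formulas \eqref{null2}. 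The resulting expression has exactly the same index structure as the sum shown to vanish in the proof of the preceding lemma, with the two free arguments now carried by $(\psi,\zeta)$ and $(\xi,\chi)$. Since that cancellation used only the antisymmetry $Q_{\alpha\beta}=-Q_{\beta\alpha}$ and the symmetry of the contracted product under $\alpha\leftrightarrow\beta$, it applies here as well, giving $R=0$ and $c=0$ in these cases. Combining the three cases proves \eqref{double pro}.

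I do not expect any genuine obstacle: the statement is a routine variant of the previous lemma. The only point requiring care is the sign bookkeeping when raising the time index in the split $Q^{\alpha\beta}Q_{\alpha\beta}=2Q^{0i}Q_{0i}+Q^{ij}Q_{ij}$ and matching it against the Kronecker-delta structure of \eqref{null2} for $\Omega_{0k}$ and $\Omega_{kl}$; once this is handled correctly the cancellation is immediate and the lemma follows.
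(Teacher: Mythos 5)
Your proposal is correct and is essentially the argument the paper intends: the paper gives no separate proof of this lemma, stating only that it follows by a "similar calculation" to the preceding one, and your Leibniz-rule expansion plus case-by-case treatment of $\p$, $S$, $\Omega_{0k}$, $\Omega_{kl}$ via \eqref{null1}--\eqref{null2} is exactly that calculation. Your observation that the $\Omega$-cancellations use only the antisymmetry of $Q_{\alpha\beta}$ and the symmetry of the contraction, so they carry over verbatim when the inner composition is replaced by a product, is the right justification.
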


The following lemma describes the good properties of null forms.
\begin{lemma}\label{null form}
For the null forms, we have
\be
| Q (\xi, \chi) | \lesssim | L \xi | \, |\Lb \chi | + | \Lb \xi | \, |L \chi | +  | \po \xi | \, |\po \chi |,
\ee
and
\be\label{Q_alpha}
| Q_{\alpha \beta} (\xi, \chi) | \lesssim | \po \xi | \, |\p \chi | + | \p \xi | \, |\po \chi |.
\ee
\end{lemma}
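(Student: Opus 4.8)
The plan is to prove Lemma \ref{null form} by a direct decomposition of the derivatives $\p_\alpha$ into null-frame components and exploiting the null condition $\mathcal{Q}(\xi,\xi)=0$. First I would recall that in the null frame $\{L,\Lb,e_1,e_2\}$ we may write, for any function $\psi$,
\bes
\p_\alpha \psi = -\tfrac{1}{2}(L\psi)\,\underline{m}_\alpha - \tfrac{1}{2}(\Lb\psi)\,m_\alpha + (e_A\psi)\,(e^A)_\alpha,
\ees
where $m_\alpha, \underline{m}_\alpha$ are the covectors dual to $L,\Lb$ (with $m_\alpha \underline{m}^\alpha = -2$, $m_\alpha m^\alpha = \underline{m}_\alpha \underline{m}^\alpha = 0$) and $(e^A)_\alpha$ are unit spacelike covectors tangent to $S_{u,\ub}$; equivalently one just notes $\p_t = \tfrac12(L+\Lb)$ and $\p_i = \omega_i \p_r + \po_i = \tfrac{\omega_i}{2}(L-\Lb) + \po_i$. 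Substituting this decomposition into the symbol of $Q$, i.e. into $g^{\alpha\beta}\p_\alpha\xi\,\p_\beta\chi$, and using that $g^{\alpha\beta}m_\alpha m_\beta = g^{\alpha\beta}\underline{m}_\alpha\underline{m}_\beta = 0$, every ``$LL$'' and ``$\Lb\Lb$'' product drops out, leaving only the cross term $L\xi\,\Lb\chi + \Lb\xi\,L\chi$ (with a harmless constant) and the angular term $\po_A\xi\,\po^A\chi = \slashed{g}^{AB}\po_A\xi\,\po_B\chi$; bounding each factor by absolute values gives the first inequality. The vanishing of the diagonal terms is exactly the content of the null condition, so no case analysis beyond this single algebraic identity is needed.

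For the second inequality, the form $Q_{\alpha\beta}(\xi,\chi)=\p_\alpha\xi\,\p_\beta\chi - \p_\beta\xi\,\p_\alpha\chi$ is antisymmetric, so when we plug in the decomposition $\p_\alpha = -\tfrac12 m_\alpha L - \tfrac12 \underline{m}_\alpha \Lb + (e^A)_\alpha e_A$ for both indices, the term proportional to $m_\alpha m_\beta\,(L\xi)(L\chi)$ and the one proportional to $\underline{m}_\alpha\underline{m}_\beta\,(\Lb\xi)(\Lb\chi)$ each cancel against themselves under antisymmetrization. What survives is always a product in which at least one of the two factors carries a ``good'' derivative $L$ or $e_A$, i.e. lies in $\po$: schematically the surviving pieces are $(L\xi)(\Lb\chi)$-type, $(L\xi)(e_A\chi)$-type, $(e_A\xi)(\Lb\chi)$-type, $(e_A\xi)(e_B\chi)$-type and their $\xi\leftrightarrow\chi$ counterparts, and in each the good derivative can be attached to whichever slot one prefers; estimating gives $|Q_{\alpha\beta}(\xi,\chi)|\lesssim |\po\xi|\,|\p\chi| + |\p\xi|\,|\po\chi|$. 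I would also invoke \eqref{omega3}, $|\Omega\psi|\sim r|\slashed\nabla\psi|$, only if I wanted to phrase $|\po\psi|$ in terms of Lorentz fields, but it is not strictly needed for the stated bound.

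The only mildly delicate point — and the place I would be most careful — is keeping track of the algebra so that the $m m$ and $\underline m\,\underline m$ contributions genuinely cancel rather than merely being ``expected'' to: in the first estimate this uses the null structure of $Q$ (two lightlike covectors have vanishing $g$-inner product with themselves), while in the second it uses the antisymmetry of $Q_{\alpha\beta}$. Everything else is routine Cauchy–Schwarz on the finitely many surviving frame components and the equivalence of $|\po\psi|$ with $|L\psi| + |\slashed\nabla\psi|$. So the main (small) obstacle is purely bookkeeping: writing the two null decompositions cleanly and reading off which of the sixteen index combinations survive, rather than any analytic difficulty.
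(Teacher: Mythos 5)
Your proposal is correct and follows essentially the same route as the paper: the paper likewise expands $Q$ directly via \eqref{null Q} and the decomposition \eqref{po}, and for $Q_{\alpha\beta}$ it writes $Q_{0i}$ and $Q_{ij}$ explicitly as combinations of the frame null forms $Q_{L\io}$, $Q_{\Lb\io}$, $Q_{\io\jo}$, $Q_{\Lb L}$ (equations \eqref{Q_0i}--\eqref{Q_ij}), from which the bound is read off because every surviving component carries at least one good derivative. Your covector/antisymmetry phrasing is just a more abstract packaging of the same computation, and the cancellations you flag as the delicate point are exactly the ones the paper verifies explicitly.
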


\begin{proof}
For the null form $Q$, it follows directly from \eqref{null Q} and \eqref{po}.

For null form $Q_{0i}$, we have
\begin{align*}
Q_{0i}(\xi, \chi) = & \,\p_t \xi \, \p_i \chi - \p_i \xi \,\p_t \chi   \\
= & \, \frac{1}{2}(L+\Lb) \xi \cdot (\po_i +\omega_i \p_r) \chi - (\po_i +\omega_i \p_r) \xi \cdot\frac{1}{2}(L+\Lb) \chi  \\
= & \, \frac{1}{2}(L+\Lb) \xi \, \po_i \chi - \frac{1}{2}\po_i \xi \, (L+\Lb) \chi + \frac{1}{4} \omega_i \Big( (L+\Lb) \xi \, (L- \Lb)\chi - (L- \Lb) \xi \, (L+\Lb) \chi \Big)  \nnb\\
= & \,  \frac{1}{2} \Big( Q_{L\io}(\xi, \chi) + Q_{\Lb \io}(\xi, \chi) \Big) + \frac{1}{2}  \omega_i Q_{\Lb L}(\xi, \chi),
\end{align*}
here $Q_{L\io}(\xi, \chi) = L \xi \, \po_i \chi - \po_i \xi \, L  \chi$ and $Q_{\Lb \io}(\xi, \chi) = \Lb \xi \, \po_i \chi - \po_i \xi \, \Lb  \chi$. For simplicity, we rewrite the above equality as
\be\label{Q_0i}
 Q_{0i} =  \frac{1}{2} \Big( Q_{L\io} + Q_{\Lb \io} + \omega_i Q_{\Lb L}\Big).
\ee
Similar calculation shows that
\be\label{Q_ij}
 Q_{ij} =   Q_{\io \, \jo} + \frac{1}{2} \omega_i  ( Q_{L \jo} - Q_{\Lb \jo} ) + \frac{1}{2} \omega_j  ( Q_{\Lb \io} - Q_{L \io}) .
\ee
The inequality \eqref{Q_alpha} directly follows from \eqref{Q_0i} and \eqref{Q_ij}.

\end{proof}

In the following, we transform the double null form in the frame of $\{L, \Lb, \po^{\, i}\}$. Denote $\p_a$, $\p_b$ to be any of the vectorfields $\{L, \Lb, \po^{\, i}\}$.

\begin{lemma} In terms of the frame $\{L, \Lb, \po^{\, i}\}$, we can get
\begin{align*}
Q^{\alpha\beta} (\psi, Q_{\alpha\beta} (\xi, \chi)) = & \, Q^{ab} (\psi, Q_{ab} (\xi, \chi)) - \frac{1}{r} ( L \psi + \Lb \psi ) \, Q_{\Lb L} (\xi, \chi)- \frac{1}{r} \po^{\,i}  \psi \big( Q_{L \io}(\xi, \chi) - Q_{\Lb \io}(\xi, \chi) \big).
\end{align*}
\end{lemma}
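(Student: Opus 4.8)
The plan is to prove the identity by a direct change of frame, using the two expansions \eqref{Q_0i} and \eqref{Q_ij} of $Q_{0i}$ and $Q_{ij}$ that were just obtained. First I would contract the Cartesian indices: since $Q_{00}=0$ and $Q_{\alpha\beta}$ is antisymmetric, the double null form equals, up to the overall normalisation, $-\,\p_t\psi\,\p_i Q_{0i}(\xi,\chi)+\p_i\psi\,\p_t Q_{0i}(\xi,\chi)+\p_i\psi\,\p_j Q_{ij}(\xi,\chi)$. Into the factors $Q_{0i}(\xi,\chi)$ and $Q_{ij}(\xi,\chi)$ I substitute \eqref{Q_0i} and \eqref{Q_ij}, and into every coordinate derivative the frame decomposition $\p_t=\frac12(L+\Lb)$, $\p_i=\po_i+\omega_i\p_r=\po_i+\frac{\omega_i}{2}(L-\Lb)$; the same decomposition is applied to $\p_t\psi$ and $\p_i\psi$. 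One keeps in mind throughout that $\po_i=\p_i-\omega_i\p_r$ by \eqref{po}, so that the frame vectors themselves carry the angular variables $\omega_i$.

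Next I would sort the resulting terms into two groups. In the first group the outer derivative acts, as a derivation, on the frame null forms $Q_{L\io},Q_{\Lb\io},Q_{\io\jo}$ (equivalently, on $\xi$ and $\chi$ through the frame vectors); after re-expressing the Cartesian derivatives $\p_t,\p_i$ in the frame, this group recombines — by a computation that is formally the one in the proof of Lemma \ref{null form}, carried out twice — into the purely frame-theoretic double null form $Q^{ab}(\psi,Q_{ab}(\xi,\chi))$. In the second group the outer derivative falls on an angular factor $\omega_i$. Using $\p_t\omega_i=\p_r\omega_i=0$, $\sum_j\p_j\omega_j=2r^{-1}$ and $\p_j\omega_i=r^{-1}(\delta_{ij}-\omega_i\omega_j)$, every such term carries a factor $r^{-1}$; and using repeatedly $\omega^i\omega_i=1$, $\sum_i\omega_i\po_i=0$ and $\po^{\,i}\omega_i=0$ together with the antisymmetry of the $Q_{ab}$, these $r^{-1}$ contributions should collapse to exactly $-r^{-1}(L\psi+\Lb\psi)\,Q_{\Lb L}(\xi,\chi)-r^{-1}\po^{\,i}\psi\,\bigl(Q_{L\io}(\xi,\chi)-Q_{\Lb\io}(\xi,\chi)\bigr)$. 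Adding the two groups then gives the asserted formula.

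The hard part is not conceptual but bookkeeping: the second group produces a sizeable number of $r^{-1}$ terms — in particular candidate $\po^{\,i}\po^{\,j}$-corrections coming from differentiating the non-constant sphere metric $\delta_{ij}-\omega_i\omega_j$ hidden in the frame — and one must verify that everything except the two displayed expressions cancels. Organising the calculation so that these cancellations are transparent, e.g.\ grouping terms according to which of $L,\Lb,\po^{\,i}$ is produced when a coordinate derivative is re-expressed and applying the $\omega_i$-identities before combining anything, is the only real work; no new tool beyond Lemma \ref{null form} and the computation that led to \eqref{Q_0i}--\eqref{Q_ij} is required.
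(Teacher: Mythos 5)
Your proposal follows essentially the same route as the paper: substitute the frame decompositions \eqref{Q_0i} and \eqref{Q_ij} into the Cartesian contraction $2Q^{0i}(\psi,Q_{0i})+Q^{ij}(\psi,Q_{ij})$, let the terms in which derivatives act on $\xi,\chi,\psi$ through the frame reassemble into $Q^{ab}(\psi,Q_{ab})$, and collect the $r^{-1}$ corrections arising when a derivative falls on an angular factor $\omega_i$. One caution: you list $\po^{\,i}\omega_i=0$ among your identities, but $\po^{\,i}$ applied to the function $\omega_i$ and summed over $i$ gives $\frac{1}{r}(\delta^i_i-\omega_i\omega^i)=\frac{2}{r}$ (only the operator contraction $\omega_i\po^{\,i}$ vanishes), and this nonzero value is exactly what generates the correction $-\frac{1}{r}(L\psi+\Lb\psi)\,Q_{\Lb L}(\xi,\chi)$ in the paper's computation, so taking that identity at face value would lose one of the two displayed terms. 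Since you also record the correct Cartesian formulas $\p_j\omega_i=r^{-1}(\delta_{ij}-\omega_i\omega_j)$ and $\sum_j\p_j\omega_j=2r^{-1}$, the plan goes through once this slip is corrected.
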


\begin{remark}
For simplicity, we denote $Q_{ab} (\xi, \chi)$ by $Q_{ab}$ and rewrite the above equality as
\bes
Q^{\alpha\beta} (\psi, Q_{\alpha\beta}) = Q^{ab} (\psi, Q_{ab} ) - \frac{1}{r} ( L \psi + \Lb \psi ) \, Q_{\Lb L} - \frac{1}{r} \po^{\,i}  \psi ( Q_{L \io} - Q_{\Lb \io} ).
\ees
\end{remark}

\begin{proof}
We raise the indices in $\po_i$ by $ \deo^{ij} = \delta^{ij} - \omega^i \omega^j$, then
\bes
\po^{\,i} = \deo^{ij} \po_j = (\delta^{ij} - \omega^i \omega^j) \po_j = \po_i,
\ees
noting that $\omega^j \po_j = \omega^j (\p_j - \omega_j \p_r ) = \p_r - \p_r = 0 $.

According to \eqref{Q_0i} , we have
\bes
 Q^{0i} (\psi, Q_{0i} )
=  Q^{\Lb \io} (\psi, Q_{0i} ) + Q^{L \io} (\psi, Q_{0i} ) + 2 \omega^i \,  Q^{\Lb L} (\psi, Q_{0i} ).
\ees
For the first term in the right hand side,
\begin{align*}
Q^{\Lb \io} (\psi, Q_{0i} )  = & \, \frac{1}{2} Q^{\Lb \io} (\psi, Q_{L\io} + Q_{\Lb \io} + \omega_i Q_{\Lb L} ) \\
= & \, \frac{1}{2} Q^{\Lb \io} (\psi, Q_{L\io} + Q_{\Lb \io} ) +  \frac{1}{2} \omega_i  Q^{\Lb \io} (\psi, Q_{\Lb L} ) + \frac{1}{2} \p^{\Lb} \psi ( \po^{\,i} \omega_i)  Q_{\Lb L} \\
= & \, \frac{1}{2} Q^{\Lb \io} (\psi, Q_{L\io} + Q_{\Lb \io} ) - \frac{1}{4} L \psi  \, \frac{1}{r} ( \delta^i_i - \omega_i \omega^i)  Q_{\Lb L}  \\
= & \, \frac{1}{2} Q^{\Lb \io} (\psi, Q_{L\io} + Q_{\Lb \io} ) - \frac{1}{2r} L \psi  \,  Q_{\Lb L}.
\end{align*}
Similarly, the second term
\bes
 Q^{L \io} (\psi, Q_{0i} ) =  \frac{1}{2} Q^{L \io} (\psi, Q_{L\io} + Q_{\Lb \io} )  - \frac{1}{2r} \Lb \psi  \,  Q_{\Lb L}.
\ees
As the third term,
\begin{align*}
2 \omega^i \,  Q^{\Lb L} (\psi, Q_{0i} ) = & \,\omega^i \,  Q^{\Lb L} (\psi, Q_{L\io} + Q_{\Lb \io} + \omega_i Q_{\Lb L} ) \\
= & \,  Q^{\Lb L} (\psi, \omega^i \, (Q_{L\io} + Q_{\Lb \io})  ) + \omega^i \omega_i \, Q^{\Lb L} (\psi,  Q_{\Lb L} ) \\
= & \,  Q^{\Lb L} (\psi,  Q_{\Lb L} ) ,
\end{align*}
where we use the fact $L \omega_i = \Lb  \omega_i = 0$.
Thus
\begin{align*}
 & \, Q^{0i} (\psi, Q_{0i} ) \\ = & \,\frac{1}{2} Q^{\Lb \io} (\psi, Q_{L\io} + Q_{\Lb \io} ) + \frac{1}{2} Q^{L \io} (\psi, Q_{L\io} + Q_{\Lb \io} ) +  Q^{\Lb L} (\psi,  Q_{\Lb L} ) - \frac{1}{2r} L \psi  \,  Q_{\Lb L} - \frac{1}{2r} \Lb \psi  \,  Q_{\Lb L}.
\end{align*}

A similar calculation yields
\bes
Q^{ij} (\psi, Q_{ij} )  = Q^{\io \, \jo} (\psi, Q_{\io \, \jo} ) + Q^{L \io} (\psi, Q_{L\io} - Q_{\Lb \io} ) -  Q^{\Lb \io} (\psi, Q_{L\io} - Q_{\Lb \io} ) - \frac{1}{r} \po^{\, i} \psi  ( Q_{L\io} - Q_{\Lb \io} ).
\ees

Therefore,
\begin{align*}
Q^{\alpha\beta} (\psi, Q_{\alpha\beta} ) = & \, 2 Q^{0i} (\psi, Q_{0i} ) + Q^{ij} (\psi, Q_{ij} ) \\
 = & \, Q^{ab} (\psi, Q_{ab} ) - \frac{1}{r} ( L \psi + \Lb \psi ) \, Q_{\Lb L}- \frac{1}{r} \po^{\,i}  \psi ( Q_{L \io} - Q_{\Lb \io} ).
\end{align*}
\end{proof}

 The following lemma reveals the analogous properties of double null forms as Lemma \ref{null form}.
\begin{lemma}\label{double null}
For the double null forms, we have
\begin{align}
& \, |  Q^{\alpha \beta} (\psi, Q_{\alpha \beta} (\xi, \chi)) | \nnb \\
 \lesssim & \, | L \psi | \Big( |\Lb^2 \xi | \, | L \chi | + | \Lb \xi | \, |\Lb L \chi | + |\Lb L \xi | \, | \Lb  \chi | + | L \xi | \, | \Lb^2 \chi | \Big) \nnb \\
 & \,+ | \Lb \psi | \Big( |L^2 \xi | \, | \Lb \chi | + | L \xi | \, |L \Lb \chi | + |L \Lb \xi | \, |L  \chi | + | \Lb \xi | \, | L^2 \chi | \Big) + \frac{1}{r} | \Lb \psi | \Big( |\Lb \xi | \, | L \chi | + |L  \xi | \, | \Lb \chi | \Big)
 \nnb \\
 & \, + | \po \psi | \Big(  |\p \po \xi | \, | \po \chi | +  | \po \xi | \, |\p \po \chi | +  |\po^2 \xi | \, | \p \chi | + |\p \xi | \, | \po^2 \chi | \Big)  + | \Lb \psi |  \Big( |\po^2 \xi | \, | \po \chi | + |\po \xi | \, | \po^2  \chi | \Big) \nnb \\
 & \, + \frac{1}{r} | \po \psi | \Big( |\p  \xi | \, | \po \chi | + |\po  \xi | \, | \p \chi | \Big) .\label{double Q}
\end{align}
\end{lemma}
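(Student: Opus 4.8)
The plan is to combine the frame decomposition of the double null form established in the previous lemma with the pointwise bounds on single null forms from Lemma \ref{null form}, applying the latter twice. Starting from the identity
\bes
Q^{\alpha\beta} (\psi, Q_{\alpha\beta}(\xi, \chi)) = Q^{ab}(\psi, Q_{ab}(\xi, \chi)) - \frac{1}{r}(L\psi + \Lb\psi)\, Q_{\Lb L}(\xi, \chi) - \frac{1}{r}\po^{\,i}\psi\big(Q_{L\io}(\xi, \chi) - Q_{\Lb\io}(\xi, \chi)\big),
\ees
I would treat the three groups of terms separately. The lower-order pieces carrying the explicit $\tfrac1r$ factor are the easiest: I apply \eqref{Q_alpha}-type estimates to $Q_{\Lb L}(\xi,\chi)$, $Q_{L\io}(\xi,\chi)$, $Q_{\Lb\io}(\xi,\chi)$, noting that each is a single null form in $\xi, \chi$ so that $|Q_{\Lb L}|, |Q_{L\io}|, |Q_{\Lb\io}| \lesssim |\po\xi||\p\chi| + |\p\xi||\po\chi|$, actually with the cleaner structure $|\Lb\xi||L\chi| + |L\xi||\Lb\chi|$ for $Q_{\Lb L}$; together with $|L\psi|+|\Lb\psi| \lesssim |\Lb\psi| + (\text{good})$ and $|\po\psi|$ this produces the terms $\tfrac1r|\Lb\psi|(|\Lb\xi||L\chi| + |L\xi||\Lb\chi|)$ and $\tfrac1r|\po\psi|(|\p\xi||\po\chi| + |\po\xi||\p\chi|)$ in the claimed bound.

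The heart of the matter is the principal term $Q^{ab}(\psi, Q_{ab}(\xi,\chi))$, which is a null form $Q^{ab}(\psi, \,\cdot\,)$ with the second slot itself being $Q_{ab}(\xi,\chi)$, a $(0,2)$-tensor. The idea is to apply the $Q$-estimate of Lemma \ref{null form} to the outer contraction in $\psi$: since $Q^{ab}(\psi, H_{ab})$ contracts a gradient of $\psi$ against a derivative of $H$, and is a null form in the sense that contracting against the null direction kills the bad-bad interaction, one gets schematically $|Q^{ab}(\psi, H_{ab})| \lesssim |L\psi|\,|\Lb H| + |\Lb\psi|\,|L H| + |\po\psi|\,|\po H| + (\text{zeroth-order }H\text{ terms from }Q_{ab}\text{ being a tensor, not a scalar, i.e. } \tfrac1r\text{-terms already handled})$, where $H = Q_{ab}(\xi,\chi)$ and $\Lb H$, $L H$, $\po H$ denote the appropriate frame derivatives of the components. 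Then I differentiate $Q_{ab}(\xi,\chi) = \p_a\xi\,\p_b\chi - \p_b\xi\,\p_a\chi$ along $L$, $\Lb$, $\po$ respectively and insert \eqref{Q_alpha} again on the undifferentiated factor: e.g. $\Lb\big(Q_{ab}(\xi,\chi)\big)$ contributes terms like $\Lb\p_a\xi\,\p_b\chi$ and $\p_a\xi\,\Lb\p_b\chi$, and pairing with $|L\psi|$ out front and re-expressing $\p_a, \p_b$ through $\{L, \Lb, \po\}$ yields precisely the line $|L\psi|(|\Lb^2\xi||L\chi| + |\Lb\xi||\Lb L\chi| + |\Lb L\xi||\Lb\chi| + |L\xi||\Lb^2\chi|)$; symmetrically for $|\Lb\psi|$; and the $|\po\psi|\,|\po(Q_{ab})|$ term, after expanding, gives the line with $|\po\psi|(|\p\po\xi||\po\chi| + \cdots)$ together with the $|\po^2\xi||\p\chi| + |\p\xi||\po^2\chi|$ contributions.

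The one subtlety I expect to be the main obstacle is bookkeeping the frame-derivative terms that arise because $Q_{ab}(\xi,\chi)$ is a \emph{tensor}: applying a frame vector field $\p_c \in \{L, \Lb, \po\}$ to $Q_{ab}$ does not simply land on $\xi$ or $\chi$ — there are connection terms $\sim \tfrac1r$ coming from $\p_c(\p_a\xi)$ when $\p_a$ itself is $\po_i$ or $\Lb$ (cf. the connection coefficients $\Gamma$ with $\tfrac1r$ weights), and one must check these are all of the type already present on the right-hand side, in particular the extra term $|\Lb\psi|(|\po^2\xi||\po\chi| + |\po\xi||\po^2\chi|)$ which comes from the interplay of the $\po$-slot in $Q_{ab}$ with $L\Lb$-commutation of the good frame. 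Once one tracks that every such term is dominated by one already listed, the estimate closes by the triangle inequality; the remaining work is purely the mechanical expansion of $\p_a \in \{L,\Lb,\po\}$ in each slot, which I would not write out in full.
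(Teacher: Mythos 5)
Your proposal is correct and is precisely the argument the paper intends: the paper states this lemma without writing out a proof, but the frame decomposition $Q^{\alpha\beta}(\psi,Q_{\alpha\beta}) = Q^{ab}(\psi,Q_{ab}) - \tfrac{1}{r}(L\psi+\Lb\psi)Q_{\Lb L} - \tfrac{1}{r}\po^{\,i}\psi(Q_{L\io}-Q_{\Lb\io})$ proved in the immediately preceding lemma is clearly the intended starting point, and your term-by-term expansion (pairing $L\psi$ with $\Lb Q_{\Lb L}$ and $\Lb\psi$ with $L Q_{\Lb L}$ for the two-$\Lb$ lines, the mixed $Q^{L\io},Q^{\Lb\io},Q^{\io\jo}$ contractions for the one-$\Lb$ lines, and absorbing the $\tfrac1r$ and commutator corrections, with $L\psi$ counted among the good derivatives $\po\psi$) reproduces exactly the stated right-hand side. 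No gaps.
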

We can see that each term in the first and second lines of \eqref{double Q} includes two $\Lb$, while that in the third and fourth lines includes only one $\Lb$. The good derivatives or the extra $r^{-1}$ will provide enough decay in three dimensional case.

\subsection{Energy identity}
We briefly recall the vector field method here. One can refer to Alinhac \cite{Alinhac} or Wang and Yu \cite{W-Y} for details.

Let $\psi$ be a solution for the following non-homogenous wave equation $$\Box \psi = \Psi,$$ $X, Y, Z$ be the vector fields defined before. The energy momentum tensor associated to $\psi$ is defined by
\bes
T(X, Y) [\psi] = X \psi \cdot Y \psi -\frac{1}{2} \left< X, Y \right> \, |\nabla  \psi|^2.
\ees
In local coordinates
\be\label{momentum}
T_{\alpha \beta}  [\psi] = \p_\alpha \psi \, \p_\beta \psi -\frac{1}{2} \, g_{\alpha \beta} \, |\nabla  \psi|^2.
\ee
The deformation tensor of a given vector field $X$ is defined by
\bes
^{(X)}\pi(Y, Z) = \left< D_Y  X, Z \right> + \left< D_Z  X, Y \right>.
\ees
In local coordinates
\bes
^{(X)}\pi_{\alpha \beta} = D_\alpha  X_\beta + D_\beta  X_\alpha.
\ees
For $f \in C^1$ being an arbitrary function, we have
\be\label{deformation2}
^{(fX)}\pi_{\alpha \beta} = f \, ^{(X)}\pi_{\alpha \beta} + X_\alpha \cdot \p_\beta f + X_\beta \cdot \p_\alpha f.
\ee
The energy currents are defined as follows
\bes
P_\alpha = T_{\alpha \beta} [\psi] \cdot X^\beta,
\ees
and
\be\label{KX}
K^X  [\psi] = \frac{1}{2} \, T^{\alpha \beta}  [\psi] \cdot {^{(X)}\pi_{\alpha \beta} },
\ee
then we have a key divergence form
\bes
div P  = D_\alpha P^\alpha = \Box \psi \cdot X \psi+ K^X  [\psi].
\ees
Integrating $\Box \psi \cdot X \psi$ in some domain $D$, and using the
divergence theorem, we obtain
\be\label{div}
\int_{\p D} \left<P, N \right> \, \di v = \int_{\p D} T(X, N) \,  \di v = \iint_{D} \big( \Box \psi \cdot X \psi+ K^X [\psi] \big) \, \di V,
\ee
where $N$ is the unit outward normal vector to $\p D$ and $\left<\cdot, \cdot \right>$ is the inner product.

In the null frame $(e_1, e_2, e_3 = L, e_4 = \Lb)$, the deformation tensors and currents are computed as
\be\label{deformation}
^{(L)}\pi = \frac{2}{r} \slashed{g}, \quad ^{(\Lb)}\pi = -\frac{2}{r} \slashed{g}, \quad K^L [\psi] = \frac{1}{r} L \psi \Lb \psi, \quad K^{\Lb} [\psi] = -\frac{1}{r} L \psi \Lb \psi,
\ee
here $\slashed{g}$ is the restriction of the Minkowski metric $g$ on $S_{u, \ub}$.

\section{Globally smooth solution in region II}\label{sec3}

In this section, we will prove the global existence of smooth solution in the short pulse region II.

For $0\leq l \leq k$, denote
\bes
\theta_{k,l} = \delta^l \p^l \Gamma^{k-l} \theta, \quad \phi_{k,l} = \delta^l \p^l \Gamma^{k-l} \phi,
\ees
\bes
| \p \theta_k | = \sum_{l \leq k} | \p \theta_{k,l} | , \quad | \p  \phi_{k} | = \sum_{l \leq k} | \p \phi_{k,l} |,
\ees
and
\bes
\Vert \p \theta_k \Vert^2_{L^2(\Omega)} = \sum_{l \leq k} \Vert \p \theta_{k,l} \Vert^2_{L^2(\Omega)} , \quad \Vert \p  \phi_{k} \Vert^2_{L^2(\Omega)} = \sum_{l \leq k} \Vert \p \phi_{k,l} \Vert^2_{L^2(\Omega)}.
\ees

We first define the energy norms defined on $C_u$ and $\Cb_{\ub}$. Denote $D_{u, \ub}$ to be the closed domain bounded by $C_u$, $\Cb_{\ub}$, $C_0$ and the initial hypersurface $\Sigma_1$. Let $C_u^{\ub'}$ be the outgoing null boundary of $D_{u, \ub'}$ on $C_u$ and   $\Cb_{\ub}^{u'}$ be the incoming null boundary of $D_{u', \ub}$ on $\Cb_{\ub}$. Given any function $f$, define
\bes
\begin{aligned}
\Vert f \Vert_{L^2(C_u^{\ub})} = \Big( \int_{1-u}^{\ub} \int_{S_{u, \ub'}} |f|^2 \, r^2 \, \di \ub' \, \di \sigma_{S^2} \Big)^{\frac{1}{2}}, \\
\Vert f \Vert_{L^2(\Cb_{\ub}^u)} = \Big( \int_{max\{1-\ub,0\}}^{u} \int_{S_{u', \ub}} |f|^2 \, r^2 \, \di u' \, \di \sigma_{S^2} \Big)^{\frac{1}{2}}.
\end{aligned}
\ees
Define the $k$-th order homogeneous energies as follows
\be\label{E}
\delta^2 E^2_k(u, \ub) = \Vert \slashed{\nabla} \theta_{k} \Vert^2_{L^2(\Cb_{\ub}^u)} + \Vert \slashed{\nabla} \phi_{k} \Vert^2_{L^2(\Cb_{\ub}^u)} + \Vert L \theta_{k} \Vert^2_{L^2(C_u^{\ub})} + \Vert L \phi_{k} \Vert^2_{L^2(C_u^{\ub})}
\ee
and
\be\label{Eb}
\delta \Eb^2_k(u, \ub) = \Vert \slashed{\nabla} \theta_{k} \Vert^2_{L^2(C_u^{\ub})} +  \Vert \slashed{\nabla} \phi_{k} \Vert^2_{L^2(C_u^{\ub})} + \Vert \Lb \theta_{k} \Vert^2_{L^2(\Cb_{\ub}^u)} + \Vert \Lb \phi_{k} \Vert^2_{L^2(\Cb_{\ub}^u)}.
\ee

The inhomogeneous energies are defined as
\be\label{sum}
E^2_{\leq k}(u, \ub) = \displaystyle \sum_{0 \leq j \leq k} E^2_j (u, \ub), \quad \Eb^2_{\leq k}(u, \ub) = \displaystyle \sum_{0 \leq j \leq k} \Eb^2_j (u, \ub).
\ee

Given any subregion $D_{u^*, \ub^*}$ in the short pulse region II, we have

\begin{theorem}
  There exists a positive constant $\delta_0$ such that for $\delta \in (0, \delta_0)$, $ (u,\ub) \in D_{u^*, \ub^*}$, we have for $N \in \mathbb N$ and $N \geq 6$
  \be\label{boostrap}
  E_{\leq N}(u, \ub) +  \Eb_{\leq N}(u, \ub) \lesssim I_N(\theta_0, \theta_1, \phi_0, \phi_1),
  \ee
  where $I_N(\theta_0, \theta_1, \phi_0, \phi_1)$ is a positive constant depending only on the initial data up to $N+1$ order of derivatives.
\end{theorem}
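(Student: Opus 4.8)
The plan is to run a bootstrap (continuity) argument on the energies $E_{\leq N}$ and $\bar{E}_{\leq N}$ over the subregion $D_{u^*,\bar u^*}$. First I would fix a bootstrap constant and assume
\bes
E_{\leq N}(u,\bar u) + \bar E_{\leq N}(u,\bar u) \leq 2 M_0 I_N
\ees
on some sub-domain, together with the pointwise consequences recorded in the Remark (namely $|\slashed\nabla\theta|, |\slashed\nabla\phi|\lesssim\delta^{3/4}$, $|L\theta|,|L\phi|\lesssim\delta$, $|\bar L\theta|,|\bar L\phi|\lesssim 1$, obtained from the bootstrap via the Sobolev/Klainerman--Sobolev lemma referenced as Lemma \ref{Sobolev}), and then improve the constant to $M_0$ provided $\delta$ is small. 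The core of the argument is the energy identity \eqref{div} applied to the commuted equations. Commuting \eqref{Eq} (in the quasilinear form \eqref{multi}) with $\delta^l\partial^l\Gamma^{k-l}$ for $k\leq N$, I would derive the system satisfied by $\theta_{k,l},\phi_{k,l}$; the null-form commutation identities \eqref{null}, \eqref{double}, \eqref{double pro} guarantee that every nonlinear term on the right-hand side retains its null or double-null structure, with only harmless lower-order corrections (the $c=-4$ scaling terms and the $\tilde{\mathcal Q}$ linear combinations of null forms).

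Next I would apply the multiplier method with the Faddeev-adapted multipliers described in the strategy section: $\tilde L = L + ((L\theta)^2+(L\phi)^2)\bar L$ for the $\theta$-equation-type energies and $\tilde{\bar L} = \bar L + ((\bar L\theta)^2+(\bar L\phi)^2)L$ for the incoming ones. The key algebraic point is that these are causal (future-directed timelike/null) with respect to the geometric metric $g(\partial\phi)$, so that $T(\tilde L, N)$ and $T(\tilde{\bar L},N)$ restricted to $C_u$, $\bar C_{\bar u}$ are coercive and control precisely the combinations in \eqref{E}--\eqref{Eb}; the correction terms like $(L\phi)^2\bar L$ are of size $\delta^2$ and hence absorbable. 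Using \eqref{div} over $D_{u,\bar u}$ and the deformation-tensor computations \eqref{deformation} (plus \eqref{deformation2} for the $f X$ pieces generated by the $\delta^2$-small coefficients), the bulk term $K^{\tilde L}[\psi]$ produces a favorable $\frac{1}{r}L\psi\,\bar L\psi$ term whose sign I would exploit, while the remaining bulk contributions are estimated by the null-form lemmas: Lemma \ref{null form}, \eqref{Q_alpha}, and especially Lemma \ref{double null}, where every dangerous top-order term carries either a good derivative $\slashed\partial$ or an extra $r^{-1}$, yielding integrable-in-$\bar u$ decay in three dimensions. The cross terms coupling $\theta$ and $\phi$ at order $k+2$ are handled exactly by the symmetry built into $\tilde L,\tilde{\bar L}$: when one sums the $\theta$- and $\phi$-energy estimates, the top-order cross contributions cancel in pairs.

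The resulting estimate will have the schematic form
\bes
\delta^2 E_{\leq N}^2(u,\bar u) + \delta \bar E_{\leq N}^2(u,\bar u) \lesssim \delta^2 I_N^2 + \int_{1-u}^{\bar u}\frac{1}{|\bar u'|^{s}}\big(\delta^\alpha\big) E_{\leq N}^2(u,\bar u')\, d\bar u' + (\text{similar }\bar E\text{ terms}),
\ee
with a positive power of $\delta$ (from the pointwise bounds above and the $\delta$-weights in the norms $\theta_{k,l}$) multiplying the error integrals, plus an $r^{-2}$-type weight from the good/extra-decay structure. A Grönwall argument in $\bar u$ (and a parallel one in $u$ for $\bar E$, coupling the two) then closes the bootstrap: the constant is improved from $2M_0$ to $M_0$ once $\delta<\delta_0$. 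The main obstacle I anticipate is the careful bookkeeping of the top-order ($k+2$) derivative terms — verifying that the quasilinear principal part and the cross terms, after commutation with $\delta^l\partial^l\Gamma^{k-l}$, genuinely reorganize (via integration by parts and the symmetry of the multiplier pair) so that no term survives which is not either a good-derivative/null-form term with integrable decay or a $\delta$-small perturbation absorbable into the left-hand side; a secondary subtlety is keeping the $\delta$-weights consistent so that the energy hierarchy \eqref{E}--\eqref{Eb} (with its asymmetric powers $\delta^2 E^2$ versus $\delta\bar E^2$) is exactly reproduced rather than losing a power of $\delta$.
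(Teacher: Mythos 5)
Your proposal follows essentially the same route as the paper: bootstrap assumption, commutation with $\delta^l\p^l\Gamma^{k-l}$, the adapted multipliers $\tilde L$ and $\tilde{\Lb}$, the null/double-null structure and decay lemmas for the bulk errors, and a Grönwall argument to close. The only execution details you gloss over are that the cancellation of the order-$(k+2)$ cross terms also requires first multiplying the $\phi$-equation by $\cos^2\theta$ (so that $h_1^{\alpha\beta}=h_2^{\beta\alpha}$), and that the paper must estimate $\Eb_{\leq N}$ \emph{before} $E_{\leq N}$, since a direct bound on the bulk term $r^{-1}L\phi_{k,l}\,\Lb\phi_{k,l}$ only yields $\delta^2M^2$, which is too weak to improve the bootstrap constant for $E$.
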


The proof is based on a bootstrap argument. Assume that there exists a large constant $M$, which is to be determined later and only depends on the initial data, such that
\be\label{assump}
E_{\leq N}(u, \ub) +  \Eb_{\leq N}(u, \ub) \leq M,
\ee
for all $(u,\ub) \in D_{u^*, \ub^*}$.

Under the bootstrap assumption \eqref{assump}, we have the following decay estimates.

\subsection{Sobolev inequalities and decay estimates}

The following Sobolev inequalities was proved in \cite{W-W}. These estimates can help us to guarantee the hyperbolicity of \eqref{Eq} and the positivity of the energy.

\begin{lemma}\label{Sobolev}
Under the assumption \eqref{assump}, we have the following Sobolev inequalities, for $0 \leq l \leq k \leq N - 2$
\begin{align*}
& \Vert \slashed{\nabla} \theta_{k,l} \Vert_{L^{\infty}(S_{u, \ub})} \lesssim \delta^{\frac{3}{4}} |\ub|^{-\frac{3}{2}} M, && \Vert \slashed{\nabla} \phi_{k,l} \Vert_{L^{\infty}(S_{u, \ub})} \lesssim \delta^{\frac{3}{4}} |\ub|^{-\frac{3}{2}} M, \\
& \Vert L \theta_{k,l} \Vert_{L^{\infty}(S_{u, \ub})} \lesssim \delta |\ub|^{-\frac{3}{2}} M, && \Vert L \phi_{k,l} \Vert_{L^{\infty}(S_{u, \ub})} \lesssim \delta |\ub|^{-\frac{3}{2}} M, \\
& \Vert \Lb \theta_{k,l} \Vert_{L^{\infty}(S_{u, \ub})} \lesssim |\ub|^{-1} M, && \Vert \Lb \phi_{k,l} \Vert_{L^{\infty}(S_{u, \ub})} \lesssim |\ub|^{-1} M.
\end{align*}
\end{lemma}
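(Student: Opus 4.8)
\textbf{Proof proposal for Lemma \ref{Sobolev}.}

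The plan is to transfer the $L^2$ control encoded in the bootstrap assumption \eqref{assump} into pointwise control on each sphere $S_{u,\ub}$, via a standard Sobolev embedding on $\mathbb{S}^2$ combined with the one–dimensional fundamental theorem of calculus in the transversal null direction. Since the excerpt attributes the inequality to \cite{W-W}, I would follow that template. First I would fix $0\le l\le k\le N-2$ and a quantity $\psi\in\{\theta_{k,l},\phi_{k,l}\}$, and work on a fixed sphere $S_{u,\ub}$ of radius $r=\ub-u\sim|\ub|$. The key tool is the Sobolev inequality on the round sphere $S_{u,\ub}$ in the rescaled form: for a scalar $h$,
\[
\Vert h\Vert_{L^\infty(S_{u,\ub})}\lesssim \sum_{j\le 2} r^{j-1}\Vert \slashed{\nabla}^{\,j} h\Vert_{L^2(S_{u,\ub})},
\]
which trades two angular derivatives for an $r^{-1}$ weight. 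Because $N\ge 6$ and $k\le N-2$, applying this with $h=\po\psi$ (for $\po\in\{L,\Lb,\slashed{\nabla}\}$) brings in at most two extra angular derivatives, and $|\slashed{\nabla}\psi|\sim r^{-1}|\Omega\psi|$ by \eqref{omega3}, so the extra angular derivatives are absorbed into higher–order $\Gamma$–derivatives still within order $N$; the resulting norms are $L^2(S_{u,\ub})$ norms, which are controlled by slicing the flux norms $E_{\le N}$, $\Eb_{\le N}$.

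Second, I would upgrade the sphere $L^2$ norms to the flux norms by integrating along the transversal direction. For the $L$ and $\slashed{\nabla}$ components one uses that $\Vert\cdot\Vert^2_{L^2(C_u^{\ub})}=\int_{1-u}^{\ub}\Vert\cdot\Vert^2_{L^2(S_{u,\ub'})}\,r^2\,\di\ub'$, and since on $B_{1-2\delta}\cup(\mathbb{R}^3-B_1)$ and on $S_{\delta,1-\delta}$ the relevant data vanish, the fundamental theorem of calculus in $\ub$ (resp.\ in $u$ for the $\Lb$ component) gives $\Vert h\Vert^2_{L^2(S_{u,\ub})}\lesssim r^{-2}\Vert h\Vert_{L^2(C_u^{\ub})}(\Vert h\Vert_{L^2(C_u^{\ub})}+\Vert L h\Vert_{L^2(C_u^{\ub})})$ up to commutator terms from $[\p_{\ub},\slashed{\nabla}]$ and lower order. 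Tracking the weights: the definitions \eqref{E}, \eqref{Eb} carry $\delta^2 E_k^2$ and $\delta\Eb_k^2$, so the good components $\slashed{\nabla}\theta_k,\slashed{\nabla}\phi_k,L\theta_k,L\phi_k$ come with a $\delta^{-1}$ gain converting to $\delta^{3/2}$ in $L^2$ flux and, after the two Sobolev integrations (one angular worth $r^{-1}$, one transversal worth $r^{-1}$), to the claimed $\delta^{3/4}|\ub|^{-3/2}M$ for $\slashed{\nabla}$ and $\delta|\ub|^{-3/2}M$ for $L$; the bad component $\Lb$ has no $\delta$ gain and only the incoming integration available, yielding $|\ub|^{-1}M$. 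The bookkeeping of exactly which $\delta$ and $r$ powers land where — in particular keeping the $\delta^{3/4}$ rather than $\delta^{1/2}$ for $\slashed{\nabla}$, which requires using a $\delta\p_r$–weighted version of the energy or the scaling field $S$ appropriately — is the part that must be done carefully.

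The main obstacle I anticipate is not the Sobolev embedding itself but the commutator algebra needed to stay within the energy hierarchy: the operators $\slashed{\nabla}$, $L$, $\Lb$ do not commute cleanly with the Lorentz fields $\Gamma$ and with each other (the connection coefficients of \eqref{deformation} enter, e.g.\ $[\p_{\ub},\slashed{\nabla}]\sim r^{-1}\slashed{\nabla}$), so when I differentiate $\psi=\delta^l\p^l\Gamma^{k-l}(\theta\text{ or }\phi)$ twice more in angular directions I must re-express everything in terms of the normalized quantities $\theta_{k',l'},\phi_{k',l'}$ with $k'\le N$ and check that no uncontrolled $r$–weights are generated; the identities \eqref{omega3}, \eqref{po-Omega} and the null–frame connection data are what make this work, and one must verify the index count $k+2\le N$ is exactly what forces $N\ge 6$ together with $k\le N-2$ leaving room for two commutations. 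A secondary point is that the estimate is stated on an arbitrary $S_{u,\ub}$ in region II, so one must make sure the transversal integration is always performed from the portion of the cone where the integrand vanishes (guaranteed by the support properties of the short–pulse data and the trivial solution in region III), which is what legitimizes dropping the boundary term in the fundamental theorem of calculus.
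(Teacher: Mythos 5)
The paper does not actually prove this lemma — it simply cites \cite{W-W} — and your outline (sphere Sobolev on $S_{u,\ub}$ trading two angular derivatives for $r^{-1}$ weights, followed by the fundamental theorem of calculus along the transversal null direction to convert sphere $L^2$ norms into products of the flux norms in \eqref{E}--\eqref{Eb}) is exactly the standard argument used in that reference, with the correct derivative count $k+2\le N$. One small clarification on your weight bookkeeping: the $\delta^{3/4}$ for $\slashed{\nabla}$ does not require any extra $\delta\p_r$-weighting or use of $S$; it falls out of your own FTC product formula as the geometric mean of $\Vert\slashed{\nabla}\theta_k\Vert_{L^2(C_u)}\lesssim\delta^{1/2}M$ (from $\Eb$) and $\Vert L\slashed{\nabla}\theta_k\Vert_{L^2(C_u)}\lesssim|\ub|^{-1}\Vert L\theta_{k+1}\Vert_{L^2(C_u)}\lesssim|\ub|^{-1}\delta M$ (from $E$), i.e.\ $(\delta^{1/2}\cdot\delta)^{1/2}=\delta^{3/4}$, and similarly the $\Lb$ case closes because the $\delta^{-1}$ cost of $\Lb^2\theta_{k,l}\lesssim\delta^{-1}\Lb\theta_{k+1,l+1}$ is exactly cancelled by the $\delta^{1/2}\cdot\delta^{1/2}$ from the two $\Lb$-flux factors.
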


Due to the finite speed of propagation of wave equations, we have $\theta_{k,l} = 0, \phi_{k,l} = 0$ on $C_0$. Integrating $\Lb \theta_{k,l}$ in the $u= \frac{1}{2} (t-r)$ direction from  $C_0$ or $\Sigma_1$, we can get the following lemma in the short pulse region II.

\begin{lemma}\label{Sob-L}
For $0 \leq l \leq k \leq N - 2$, it holds that
\be
\Vert \theta_{k,l} \Vert_{L^{\infty}(S_{u, \ub})} \lesssim \delta |\ub|^{-1} M, \quad \Vert \phi_{k,l} \Vert_{L^{\infty}(S_{u, \ub})} \lesssim \delta |\ub|^{-1} M.
\ee
\end{lemma}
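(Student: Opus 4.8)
The plan is to recover $\theta_{k,l}$ and $\phi_{k,l}$ from their $\Lb$-derivatives by integrating along the incoming null generators, that is, in the direction $\Lb=\p_u$ at fixed $\ub$ and fixed angular coordinate $\omega$, starting the integration from a hypersurface on which the value is already controlled. There are exactly two such anchoring hypersurfaces: the cone $C_0$, on which $\theta_{k,l}=\phi_{k,l}=0$ because $\theta\equiv\phi\equiv 0$ in region III and the solution is smooth; and the annulus $\Sigma_1\cap\overline{\mathrm{II}}=\{t=1,\ 1-2\delta\le r\le 1\}$, on which the short-pulse assumptions \eqref{Con1}--\eqref{Con3} give $\Vert\theta_{k,l}\Vert_{L^\infty}+\Vert\phi_{k,l}\Vert_{L^\infty}\lesssim\delta$ for $0\le l\le k\le N-2$. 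I would also record the elementary geometric facts that throughout region II one has $|\ub|\gtrsim 1$ and, for a point on $C_u$, $\ub\ge 1-u$; the latter says precisely that the backward incoming generator through a point $(u,\ub,\omega)$ with $u\in[0,\delta]$ meets $C_0$ at $u'=0$ when $\ub\ge 1$, and otherwise meets $\Sigma_1$ at $u'=1-\ub\in[0,u]$.

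Next I would fix $(u,\ub)\in D_{u^*,\ub^*}\subseteq\mathrm{II}$ (so $u\le\delta$) and an angular point $\omega$, and apply the fundamental theorem of calculus along the $u'$-segment from the anchoring point up to $(u,\ub,\omega)$. If $\ub\ge 1$ this gives
\[
\theta_{k,l}(u,\ub,\omega)=\int_0^u(\Lb\theta_{k,l})(u',\ub,\omega)\,\di u',
\]
while if $1-\delta\le\ub<1$ it gives
\[
\theta_{k,l}(u,\ub,\omega)=\theta_{k,l}(1-\ub,\ub,\omega)+\int_{1-\ub}^u(\Lb\theta_{k,l})(u',\ub,\omega)\,\di u',
\]
with the boundary term bounded by $C\delta$ from the data assumptions; in both cases the $u'$-interval has length at most $\delta$. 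The segment stays inside $\overline{D_{u^*,\ub^*}}$ by construction, so Lemma \ref{Sobolev} applies at every point of it and yields $|\Lb\theta_{k,l}|(u',\ub,\omega)\lesssim|\ub|^{-1}M$. Substituting, the first case gives $|\theta_{k,l}(u,\ub,\omega)|\lesssim u|\ub|^{-1}M\le\delta|\ub|^{-1}M$, and the second gives $|\theta_{k,l}(u,\ub,\omega)|\lesssim\delta+\delta|\ub|^{-1}M\lesssim\delta|\ub|^{-1}M$, using $|\ub|\sim 1$ in that case together with $M\gtrsim 1$ (which may be assumed after enlarging $M$). Taking the supremum over $\omega\in S_{u,\ub}$ gives the claimed bound for $\theta$, and the argument for $\phi_{k,l}$ is word-for-word identical.

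There is essentially no analytic obstacle here: the estimate reduces to a one-dimensional integration of the pointwise bound from Lemma \ref{Sobolev} over an interval of length $\lesssim\delta$. The only point requiring care is the geometric bookkeeping --- identifying, for each point of region II, whether its backward incoming generator terminates on $C_0$ or on $\Sigma_1$, and checking that this generator does not leave $D_{u^*,\ub^*}$ --- which is settled once and for all by the inequalities $u\le\delta$ and $\ub\ge 1-u$ valid in region II. (An alternative, slightly cleaner presentation avoids the case split entirely: integrate $\Lb\theta_{k,l}$ backwards from $(u,\ub,\omega)$ until the generator first meets $C_0\cup\Sigma_1$, bounding the boundary value by $0$ or by $C\delta$ accordingly; since the $u'$-length is still $\le\delta$ and $|\ub|\gtrsim 1$ on $\Sigma_1$, the same conclusion follows.)
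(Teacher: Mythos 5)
Your proof is correct and follows essentially the same route as the paper: integrate $\Lb\theta_{k,l}$ along the incoming direction from either $C_0$ (where the data vanish) or $\Sigma_1$ (where the short-pulse constraints give an $O(\delta)$ bound), use the pointwise bound $|\Lb\theta_{k,l}|\lesssim|\ub|^{-1}M$ from Lemma \ref{Sobolev} over a $u$-interval of length at most $\delta$, and absorb the boundary term using $|\ub|\sim 1$ in the second case. The extra geometric bookkeeping you supply is consistent with, and slightly more explicit than, the paper's argument.
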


\begin{proof}
For $1- \delta \leq \ub \leq 1$, integrating $\Lb \theta_{k,l}$ along $u= \frac{1}{2} (t-r)$ direction from initial hyperplane $\Sigma_1$ and noting the constraints \eqref{Con1}-\eqref{Con3} on initial data, we can get
\bes
 \theta_{k,l} (u, \ub, v, w) = \theta_{k,l} (u_0 + \ub =1, v, w) + \int_{u_0}^u \Lb \theta_{k,l} \di u,
\ees
then
\bes
  \Vert \theta_{k,l} \Vert_{L^{\infty}(S_{u, \ub})} \lesssim \delta + \delta |\ub|^{-1} M \lesssim \delta |\ub|^{-1} M.
\ees

For $\ub > 1$, integrating $\Lb \theta_{k,l}$ along $u= \frac{1}{2} (t-r)$ direction starting from  $C_0$, we have
\bes
 \theta_{k,l} (u, \ub, v, w)  = \theta_{k,l} (0,  \ub, v, w) + \int_0^u \Lb \theta_{k,l} \di u.
\ees
Then
\bes
 \Vert \theta_{k,l} \Vert_{L^{\infty}(S_{u, \ub})} \lesssim \delta |\ub|^{-1} M.
\ees

The estimate of $\phi_{k,l}$ follows from the same calculations.
\end{proof}

For the derivatives with at least one good derivative $\bar{\partial} \in \{L,\slashed{\nabla}\}$, we have the following better point-wise decay estimates than those in Lemma \ref{Sobolev}.

\begin{lemma}\label{decay}
For $0 \leq l \leq k \leq N - 3$, we have
\begin{align*}
& | L^2 \theta_{k,l} | \lesssim \delta |\ub|^{-\frac{5}{2}} M,  \quad  &&| \Lb^2 \theta_{k,l} | \lesssim \delta^{-1} |\ub|^{-1} M, \quad && | L \Lb \theta_{k,l} | = | \Lb L \theta_{k,l} | \lesssim  |\ub|^{-2} M, \\
 & | L  \slashed{\nabla} \theta_{k,l} | \lesssim \delta^{\frac{3}{4}} |\ub|^{-\frac{5}{2}} M, \quad &&| \slashed{\nabla} L \theta_{k,l} | \lesssim \delta^{\frac{3}{4}} |\ub|^{-\frac{5}{2}} M, \quad && | \Lb \slashed{\nabla} \theta_{k,l} | \lesssim |\ub|^{-2} M,\\
 & | \slashed{\nabla} \Lb \theta_{k,l} | \lesssim |\ub|^{-2} M, \quad && |\slashed{\nabla}^2 \theta_{k,l} | \lesssim \delta^{\frac{3}{4}} |\ub|^{-\frac{5}{2}} M. &&
\end{align*}
The same estimates holds for $\phi_{k,l}$.
\end{lemma}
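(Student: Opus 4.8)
The plan is to derive all the mixed second-order estimates in Lemma \ref{decay} from the first-order Sobolev bounds of Lemma \ref{Sobolev} and Lemma \ref{Sob-L}, combined with the commutator structure of the frame $\{L,\Lb,\slashed{\nabla}\}$ and, crucially, the evolution equations \eqref{Eq}. The point is that $\slashed{\nabla}$ applied to a quantity $\psi_{k,l}$ is, up to lower-order terms and factors of $r^{-1}$, the same as applying $\slashed{\nabla}$ to $\psi_{k+1,l}$ (since the rotation vector fields $\Omega_{ij}$ belong to $\mathcal Z$ and $|\Omega\psi| \sim r|\slashed{\nabla}\psi|$ by \eqref{omega3}), and likewise $L\psi_{k,l}$ and $\Lb\psi_{k,l}$ can be related to $\psi_{k+1,l}$ via $\p_t,\p_r \in \mathcal Z$. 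Hence the ``good'' estimates — those involving at least one $\slashed{\nabla}$ or $L$ — follow directly by raising the order index by one and invoking Lemma \ref{Sobolev} at level $k+1 \le N-2$; this immediately yields $|L\slashed{\nabla}\theta_{k,l}|, |\slashed{\nabla}L\theta_{k,l}|, |\slashed{\nabla}^2\theta_{k,l}| \lesssim \delta^{3/4}|\ub|^{-5/2}M$ and $|L^2\theta_{k,l}| \lesssim \delta|\ub|^{-5/2}M$, and $|\Lb\slashed{\nabla}\theta_{k,l}|, |\slashed{\nabla}\Lb\theta_{k,l}|, |L\Lb\theta_{k,l}| \lesssim |\ub|^{-2}M$, after accounting for the extra $r^{-1}\sim|\ub|^{-1}$ that the angular frame change produces (which is exactly what upgrades $|\ub|^{-3/2}$ to $|\ub|^{-5/2}$ in the $\slashed{\nabla}$-cases, and $|\ub|^{-1}$ to $|\ub|^{-2}$ in the $\Lb\slashed{\nabla}$ cases). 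The commutators $[L,\slashed{\nabla}]$ and $[\Lb,\slashed{\nabla}]$ are $O(r^{-1}\slashed{\nabla})$ by the standard Minkowski connection coefficients, so they do not worsen these bounds.

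The genuinely delicate estimate is the bad one, $|\Lb^2\theta_{k,l}| \lesssim \delta^{-1}|\ub|^{-1}M$, because $\Lb^2$ cannot be reached by one application of a Lorentz field $\Gamma$ without picking up a factor of $\ub^{-1}\cdot\ub = O(1)$ that would destroy the gain; writing $\Lb = \p_t - \p_r$ one sees $\ub\,\Lb$ is a combination of $S, \Omega_{0i}$, so $\Lb\psi_{k,l}$ is controlled by $|\ub|^{-1}|\psi_{k+1,l}|$, and iterating gives only $|\Lb^2\theta_{k,l}| \lesssim |\ub|^{-2}|\psi_{k+2,l}|$-type bounds with the wrong $\delta$-power unless one uses the equation. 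The correct route is to go back to the wave equation: $\Box\theta = \Lb L \theta + \frac{2}{r}(\text{terms})$ modulo angular Laplacian and the connection, i.e. schematically $\Lb L\theta_{k,l} = -\slashed{\Delta}\theta_{k,l} + \frac{1}{r}(L-\Lb)\theta_{k,l} + (\text{commuted nonlinearity of \eqref{Eq}})_{k,l}$. Therefore $\Lb L\theta_{k,l}$ is controlled using the already-established good estimates plus the null/double-null structure of the right-hand side of \eqref{Eq} (Lemma \ref{null form}, Lemma \ref{double null}) under the bootstrap assumption \eqref{assump} and Lemma \ref{Sobolev}; this gives $|\Lb L\theta_{k,l}| \lesssim |\ub|^{-2}M$. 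To then pass from $\Lb L$ to $\Lb^2$ one integrates: $\Lb^2\theta_{k,l} = \Lb(L\theta_{k,l}) + \Lb((\Lb - L)\theta_{k,l}) - \text{(itself)}$ is circular, so instead one observes $L(\Lb\theta_{k,l}) = \Lb L\theta_{k,l} + [L,\Lb]\theta_{k,l}$ with $[L,\Lb]=0$ in Minkowski, whence $L(\Lb\theta_{k,l})$ is $O(|\ub|^{-2}M)$, and then integrates this along $L$ starting from the data on $\Sigma_1$ (or from $C_0$) exactly as in the proof of Lemma \ref{Sob-L} — the difference being that $\Lb\theta_{k,l}$ on the initial slice is only $O(1)$ rather than $O(\delta)$, and after dividing by the $\delta^l$ normalization implicit in $\theta_{k,l}$ the worst term is $\delta^{-1}$; combined with the $|\ub|^{-1}$ decay from integrating a $|\ub|^{-2}$ quantity over a $u$-interval of length $\lesssim \ub$, this produces precisely $|\Lb^2\theta_{k,l}| \lesssim \delta^{-1}|\ub|^{-1}M$.

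So concretely I would carry out the steps in this order: (i) record the frame commutation relations $[L,\Lb]=0$, $[L,\slashed{\nabla}] = O(r^{-1}\slashed{\nabla})$, $[\Lb,\slashed{\nabla}] = O(r^{-1}\slashed{\nabla})$ from the Minkowski connection, and the relation between $\{L,\Lb,\slashed{\nabla}\}$-weighted second derivatives and $\Gamma$-differentiated first derivatives via \eqref{omega3} and $\p_t,\p_r,S,\Omega_{0i},\Omega_{ij}\in\mathcal Z$; (ii) deduce the seven ``good'' estimates (everything except $\Lb^2$) directly from Lemma \ref{Sobolev} at order $k+1\le N-2$, tracking the $r^{-1}\sim|\ub|^{-1}$ gains; (iii) rewrite the wave operator in the null frame, $\Box = -\Lb L + \slashed{\Delta} - \frac{1}{r}(L-\Lb)$ acting on scalars, commute \eqref{Eq} with $\delta^l\p^l\Gamma^{k-l}$ and use the null-form lemmas plus the bootstrap assumption to bound the commuted right-hand side, obtaining $|\Lb L\theta_{k,l}|=|L\Lb\theta_{k,l}|\lesssim|\ub|^{-2}M$ (this also re-proves one of the good bounds intrinsically); (iv) integrate $L(\Lb\theta_{k,l})$ along the outgoing direction from $\Sigma_1$ or $C_0$, using the $O(1)$ initial bound on $\Lb\theta_{k,l}$ and the $\delta^l$ normalization, to close $|\Lb^2\theta_{k,l}|\lesssim \delta^{-1}|\ub|^{-1}M$; (v) note that every estimate for $\theta_{k,l}$ used only the structure of \eqref{Eq}, which is symmetric in $\theta$ and $\phi$ up to the coefficients $\cos^2\theta,\sin 2\theta$ that are bounded by Lemma \ref{Sob-L}, so the identical argument yields the same bounds for $\phi_{k,l}$. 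The main obstacle is step (iii)–(iv): one must verify that commuting \eqref{Eq} with $\Gamma$'s preserves the null and double-null structure (which is guaranteed by \eqref{null}, \eqref{double}, \eqref{double pro}) so that the worst right-hand side contribution still decays like $|\ub|^{-2}$, and one must be careful that the loss of one $\delta$-power in the bad direction does not propagate into the good estimates through the coupling — it does not, precisely because each bad factor $\Lb$ in the nonlinearities is always paired with a good derivative or an $r^{-1}$, as emphasized after Lemma \ref{double null}.
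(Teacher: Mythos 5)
Your treatment of the seven estimates involving at least one good derivative matches the paper's: the identities $L=\frac{S+\omega^j\Omega_{0j}}{t+r}$ and $\po_i=\frac{\omega^j}{r}\Omega_{ji}$ convert one $L$ or $\slashed{\nabla}$ into a Lorentz field divided by $|\ub|$, and Lemma \ref{Sobolev} at order $k+1\le N-2$ then yields every bound except the $\Lb^2$ one. Your detour through the wave equation for $L\Lb\theta_{k,l}$, however, is not only unnecessary but risks circularity: the commuted right-hand side contains the quasilinear contribution $\tfrac14(L\phi)^2\,\Lb^2 Z^k\theta$ and double-null terms involving $\Lb^2 Z^k\theta$ (see Lemma \ref{double null}), so extracting $\Lb L$ from the equation presupposes the very $\Lb^2$ bound you have not yet proved. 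The paper obtains $|\Lb L\theta_{k,l}|=|L\Lb\theta_{k,l}|\lesssim|\ub|^{-2}M$ purely kinematically, from $[L,\Lb]=0$ and the same weighted identity for $L$, which is the route your first paragraph already sketches correctly.

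The genuine gap is your argument for $|\Lb^2\theta_{k,l}|\lesssim\delta^{-1}|\ub|^{-1}M$. You propose to show $L(\Lb\theta_{k,l})=\Lb L\theta_{k,l}=O(|\ub|^{-2}M)$ and then ``integrate this along $L$ from $\Sigma_1$ or $C_0$.'' Integrating the $L$-derivative of a quantity along the integral curves of $L$ recovers that quantity itself, namely $\Lb\theta_{k,l}$ (already controlled by Lemma \ref{Sobolev}); it gives no information about $\Lb^2\theta_{k,l}$. There is no transport equation for $\Lb^2\theta_{k,l}$ anywhere in your scheme, and the heuristic about the ``$O(1)$ initial bound on $\Lb\theta_{k,l}$ divided by the $\delta^l$ normalization'' is not attached to any actual inequality. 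The correct argument is a one-liner you missed: since $\Lb=\p_t-\omega^i\p_i$ with $L\omega_i=\Lb\omega_i=0$, the quantity $\Lb\theta_{k,l}$ is a bounded combination of coordinate derivatives $\p\theta_{k,l}$, and by the definition $\theta_{k,l}=\delta^l\p^l\Gamma^{k-l}\theta$ one has $|\p\theta_{k,l}|=\delta^{-1}|\theta_{k+1,l+1}|$; hence
\begin{equation*}
|\Lb^2\theta_{k,l}|\lesssim|\Lb\,\p\theta_{k,l}|\lesssim\delta^{-1}|\Lb\theta_{k+1,l+1}|\lesssim\delta^{-1}|\ub|^{-1}M
\end{equation*}
by Lemma \ref{Sobolev} with $k+1\le N-2$. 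In other words, the $\delta^{-1}$ loss is pure bookkeeping from the $\delta^l$ weight, not the outcome of an integration along $L$, and no use of the equation is needed anywhere in the lemma.
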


\begin{proof}
Since
\be\label{L}
L = \frac{S+\omega^j\Omega_{0j}}{t+r},
\ee
we have
\bes
\begin{aligned}
 L^2 \theta_{k,l} & = L \Big( \frac{S+\omega^j\Omega_{0j}}{t+r} \Big) \theta_{k,l} = \Big( L \big( \frac{1}{t+r} \big) \Big) (S+\omega^j\Omega_{0j}) \theta_{k,l} +  \frac{1}{t+r} L (S+\omega^j\Omega_{0j}) \theta_{k,l}\\
 & = -\frac{2}{(t+r)^2} (S+\omega^j\Omega_{0j}) \theta_{k,l} + \frac{1}{t+r} (L S+\omega^j L \Omega_{0j}) \theta_{k,l}\\
 & = -\frac{2}{t+r} L \theta_{k,l} + \frac{1}{t+r} (L S+\omega^j L \Omega_{0j}) \theta_{k,l},
\end{aligned}
\ees
here we use the fact $L \omega^j = 0$ in the second line, then
\be\label{LL}
| L^2 \theta_{k,l} | \lesssim |\ub|^{-1} | L \theta_{k,l} | + |\ub|^{-1} | L \theta_{k+1,l} | \lesssim \delta |\ub|^{-\frac{5}{2}} M.
\ee
The same derivation leads to the third estimate for $L \Lb \theta_{k,l} $. Noting that $[L, \Lb] = 0$, the estimate holds for $\Lb L \theta_{k,l} $.

For the $\Lb^2$ derivative, we have
\be\label{LbLb}
 | \Lb^2 \theta_{k,l} | = | \Lb \big(\p_t - \omega_i \p_i) \theta_{k,l} | \lesssim | \Lb \p \theta_{k,l} |  \lesssim \delta^{-1}  | \Lb  \theta_{k+1, l+1} | \lesssim \delta^{-1} |\ub|^{-1} M.
\ee

For $L\slashed{\nabla}\theta_{k,l}$, we get
\begin{align*}
|L \slashed{\nabla} \theta_{k,l} | \lesssim & \, \sum_i \Big| L \big( \frac{\omega^j}{r} \Omega_{ij} \big) \theta_{k,l}\Big|
 \lesssim  \sum_{i,j} \Big( \Big| \frac{1}{r} L \Omega_{ij} \theta_{k,l} \Big| + \Big| \frac{1}{r^2} \Omega_{ij} \theta_{k,l} \Big| \Big)
   \\
\lesssim & \,|\ub|^{-1}  \big(| L \theta_{k+1,l} | + |\slashed{\nabla} \theta_{k,l} | \big) \lesssim \delta^{\frac{3}{4}} |\ub|^{-\frac{5}{2}} M,
\end{align*}
here we have used \eqref{omega3} and $r \sim |\ub|$ in the short pulse region II.

On the other hand, we have $[\slashed{\nabla}, L] = \frac{1}{r} \slashed{\nabla}$, thus
\bes
|\slashed{\nabla} L \theta_{k,l} | \lesssim  | L \slashed{\nabla} \theta_{k,l} | + \frac{1}{r} | \slashed{\nabla} \theta_{k,l} | \lesssim \delta^{\frac{3}{4}} |\ub|^{-\frac{5}{2}} M.
\ees
Similar calculation gives the estimates for $\Lb \slashed{\nabla}$ and $\slashed{\nabla} \Lb$ since $[\slashed{\nabla}, \Lb] = - \frac{1}{r} \slashed{\nabla}$.

For the last one, we have
\begin{align}\label{popo}
  |\slashed{\nabla}^2 \theta_{k,l} | \lesssim \sum_i  \Big| \slashed{\nabla} \big( \frac{\omega^j}{r} \Omega_{ij} \big) \theta_{k,l}\Big|   \lesssim \Big| \frac{1}{r} \slashed{\nabla} \theta_{k,l} \Big| +   \Big| \frac{1}{r} \slashed{\nabla} \theta_{k+1,l} \Big| \lesssim  \delta^{\frac{3}{4}} |\ub|^{-\frac{5}{2}} M.
\end{align}

\end{proof}

\subsection{Higher order equations}

We are turning to deduce the higher order equations of \eqref{Eq}. From Lemma \ref{Sob-L}, we know that $\cos^2\theta  \sim 1$ if $\delta$ is sufficiently small. For simplification of calculations, we can rewrite \eqref{Eq} as
\be\label{Eqnew}
\left\{
  \begin{aligned}
  &\Box \theta = -\frac{1}{2} \sin (2\theta) \, Q(\phi, \phi) + \frac{1}{4} \sin (2\theta) \,  Q^{\alpha\beta}(\theta, \phi) Q_{\alpha\beta}(\theta, \phi) + \frac{1}{2} \cos^2 \theta Q^{\alpha\beta} (\phi, Q_{\alpha\beta} (\theta, \phi) ), \\
  &\Box \phi =  \frac{\sin (2\theta)}{\cos^2 \theta} \, Q(\theta, \phi) + \frac{1}{2}  Q^{\alpha\beta} (\theta, Q_{\alpha\beta} (\phi, \theta)).
  \end{aligned}
\right.
\ee

Applying $Z^k$ on both sides of \eqref{Eqnew}, we can get the following lemma directly without proof.

\begin{lemma}
Commuting \eqref{Eq} with the Lorentz vector field $Z \in \mathcal{Z}$ for $ k \leq N$ times, we have
  \begin{align}\label{Eq1}
  \Box Z^k \theta & = \sum_{\substack{|i_1| + \cdots + |i_m| \leq |k| \\ 2 \leq m \leq |k|+2}} a_{i_1, \cdots, i_m} \, (\sin (2\theta))^{(m-2)} Q(Z^{i_1} \phi, Z^{i_2} \phi) Z^{i_3} \theta \cdots Z^{i_m} \theta  \nnb\\
  &  + \sum_{\substack{|i_1| + \cdots + |i_m| \leq |k| \\ 4 \leq m \leq |N|+4}} b_{i_1, \cdots, i_m}  \, (\sin (2\theta))^{(m-4)} Q^{\alpha\beta}(Z^{i_1} \theta,  Z^{i_2} \phi) Q_{\alpha\beta}( Z^{i_3} \theta,  Z^{i_4} \phi) Z^{i_5} \theta \cdots Z^{i_m} \theta \nnb\\
  &  + \sum_{\substack{|i_1| + \cdots + |i_m| \leq |k| \\ |i_2|, |i_3| < |k| \\ 3 \leq m \leq |k|+3}} c_{i_1, \cdots, i_m} \, (\cos^2 \theta)^{(m-3)} Q^{\alpha\beta} (Z^{i_1}\phi, Q_{\alpha\beta} (Z^{i_2}\theta,  Z^{i_3} \phi)) Z^{i_4} \theta \cdots Z^{i_m} \theta,\nnb \\
   & + \frac{1}{2} \cos^2 \theta Q^{\alpha\beta} (\phi, Q_{\alpha\beta} ( Z^k \theta, \phi) ) +  \frac{1}{2} \cos^2 \theta Q^{\alpha\beta} (\phi, Q_{\alpha\beta} ( \theta, Z^k \phi) ) \nnb \\
    & \, \nnb\\
    \text{and}\nnb\\
  \Box Z^k \phi & =  \sum_{\substack{|i_1| + \cdots + |i_m| \leq |k| \\ 2 \leq m \leq |k|+2}} d_{i_1, \cdots, i_m} \, \Big(\frac{2 \sin (2\theta)}{1+ \cos (2 \theta)} \Big)^{(m-2)} \, Q(Z^{i_1}\theta, Z^{i_2} \phi) Z^{i_3} \theta \cdots Z^{i_m} \theta \nnb\\
  &  + \sum_{\substack{|i_1| + |i_2| + |i_3| \leq |k| \\ |i_2|, |i_3| < |k| }} e_{i_1, i_2, i_3} \,Q^{\alpha\beta} (Z^{i_1}\theta, Q_{\alpha\beta} (Z^{i_2}\phi,  Z^{i_3} \theta)) \nnb\\
  & + \frac{1}{2}  Q^{\alpha\beta} (\theta, Q_{\alpha\beta} (Z^k \phi, \theta)) + \frac{1}{2}  Q^{\alpha\beta} (\theta, Q_{\alpha\beta} (\phi, Z^k \theta)),
  \end{align}
where $a_{i_1, \cdots, i_m}, b_{i_1, \cdots, i_m}, c_{i_1, \cdots, i_m}, d_{i_1, \cdots, i_m}, e_{i_1, i_2, i_3}$ are constants.
\end{lemma}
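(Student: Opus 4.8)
The plan is to derive the higher-order equations by repeated commutation of the Lorentz vector fields $Z \in \mathcal{Z}$ with the reformulated system \eqref{Eqnew}, and to track carefully how the null structure is preserved under commutation. The key observation is that each term on the right-hand side of \eqref{Eqnew} is algebraically of one of three types: (i) a coefficient function of $\theta$ times a null form $Q(\cdot,\cdot)$; (ii) a coefficient function of $\theta$ times the product of null forms $Q^{\alpha\beta}(\cdot,\cdot)Q_{\alpha\beta}(\cdot,\cdot)$; and (iii) a coefficient function of $\theta$ times a double null form $Q^{\alpha\beta}(\cdot, Q_{\alpha\beta}(\cdot,\cdot))$. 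The strategy is an induction on $k$: assuming $\Box Z^{k-1}\theta$ and $\Box Z^{k-1}\phi$ have the claimed form, apply one more vector field $Z$ and show the structure is inherited.

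The main steps are as follows. First, I would record the Leibniz rule for $Z$ acting on the scalar coefficients: $Z\big((\sin 2\theta)^{(j)}\big)$ produces $(\sin 2\theta)^{(j+1)} \cdot Z\theta$ (with constant factors), and analogously for $(\cos^2\theta)^{(j)}$ and the rational function $\tfrac{2\sin 2\theta}{1+\cos 2\theta} = \tan\theta$ appearing in the $\phi$-equation; each such differentiation raises the polynomial degree $m$ in the extra undifferentiated factors by one while keeping the total derivative count $|i_1|+\cdots+|i_m|$ bounded by $|k|$. Second, for the null-form factors I would invoke the commutation identities already established in the excerpt: \eqref{null} for the single null forms $Q$ and $Q_{\alpha\beta}$, \eqref{double} for the double null form $Q^{\alpha\beta}(\psi, Q_{\alpha\beta}(\xi,\chi))$, and \eqref{double pro} for the product $Q^{\alpha\beta}(\psi,\zeta)Q_{\alpha\beta}(\xi,\chi)$. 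These say that applying $Z$ to such a quadratic or cubic structure redistributes the $Z$ onto each argument plus possibly a harmless multiple of the original structure (the constant $c=-4$ or $c=-2$ when $Z=S$, $0$ otherwise), which is absorbed into the constants $a,b,c,d,e$. Third, I would isolate the genuinely top-order terms: when $Z$ falls on an argument that is already carrying $k-1$ derivatives, one obtains terms with a factor $Z^k\theta$ or $Z^k\phi$; all such top-order contributions in the $\theta$-equation must be collected into exactly the two displayed terms $\tfrac12\cos^2\theta\, Q^{\alpha\beta}(\phi, Q_{\alpha\beta}(Z^k\theta,\phi))$ and $\tfrac12\cos^2\theta\, Q^{\alpha\beta}(\phi, Q_{\alpha\beta}(\theta, Z^k\phi))$ (which is why the index ranges in the $c$-sum impose $|i_2|,|i_3| < |k|$), and similarly for $\phi$. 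The point here is that the semilinear terms of type (i) and (ii), and the cubic terms of type (iii) with $Z$ on the outer $\psi$-slot, only ever put at most $k-1$ Lorentz fields on any single factor after accounting for the quasilinear coupling, so they land in the first three sums; it is only the double null coupling that genuinely feeds $Z^k$ back in, and that coupling is linear in the highest-order unknown.

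The main obstacle is bookkeeping rather than conceptual: one must verify that the index constraints ($2\le m\le |k|+2$ for the $Q$ sums, $4\le m\le |k|+4$ for the $QQ$ sums, $3\le m\le |k|+3$ and $|i_2|,|i_3|<|k|$ for the double-null sum) are exactly what the induction produces, with no leakage of derivatives above the stated counts and no spurious top-order term outside the four explicit ones. This requires being careful that $\Box$ commutes with $\p$ exactly and with $\Gamma$ up to lower-order terms governed by $[\Box, \Gamma]$, which for $\Gamma \in \{\Omega_{0i},\Omega_{ij}\}$ vanishes and for $S$ gives $[\Box,S]=2\Box$, consistent with the $c=-4$ shift in \eqref{double} after two applications; these scaling weights must be tracked so they merge cleanly into the listed structure. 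Since the claimed lemma is asserted in the paper "directly without proof," the proposal is to present exactly this induction skeleton — Leibniz on coefficients, the three commutator identities \eqref{null}, \eqref{double}, \eqref{double pro} for the null structures, and the isolation of the linear-in-top-order coupling terms — as the justification, with the routine index arithmetic left to the reader.
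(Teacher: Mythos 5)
Your proposal is correct and is exactly the argument the paper intends but omits (the paper states the lemma follows ``directly without proof'' by applying $Z^k$ to \eqref{Eqnew}): Leibniz on the coefficient functions, the commutation identities \eqref{null}, \eqref{double}, \eqref{double pro} to preserve the null structures, and the isolation of the two genuinely quasilinear terms where $Z^k$ lands on an inner slot of the double null form, which is precisely why the $c$- and $e$-sums carry the restriction $|i_2|,|i_3|<|k|$. The only refinement worth adding is that for the basic form $Q$ the commutators \eqref{null1}--\eqref{null2} return only a constant multiple of $Q$ itself (no $Q_{\alpha\beta}$ leakage), so the first sum genuinely closes on $Q$ alone.
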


\subsection{Energy scheme}\label{3.3}

The system \eqref{Eq1} is not symmetric, in order to close the energy estimates, we multiply $\cos^2 \theta$ on both sides of the second equation of \eqref{Eq1} as follows
\begin{align}
	& \Box Z^k \theta - g_1^{\alpha \beta} \p_\alpha  \p_\beta Z^k \theta  -h_1^{\alpha\beta} \p_\alpha \p_\beta Z^k \phi \nnb \\
	= &\sum_{\substack{|i_1| + \cdots + |i_m| \leq |k| \\ 2 \leq m \leq |k|+2}} a_{i_1, \cdots, i_m} \, (\sin (2\theta))^{(m-2)} Q(Z^{i_1} \phi, Z^{i_2} \phi) Z^{i_3} \theta \cdots Z^{i_m} \theta \nnb  \\
	& + \sum_{\substack{|i_1| + \cdots + |i_m| \leq |k| \\ 4 \leq m \leq |k|+4}} b_{i_1, \cdots, i_m} \, (\sin (2\theta))^{(m-4)} Q^{\alpha \beta}(Z^{i_1} \theta,  Z^{i_2} \phi) Q_{\alpha \beta}( Z^{i_3} \theta,  Z^{i_4} \phi) Z^{i_5} \theta \cdots Z^{i_m} \theta \nnb \\
	&+ \sum_{\substack{|i_1| + \cdots + |i_m| \leq |k| \\ |i_2|, |i_3| < |k| \\ 3 \leq m \leq |k|+3} } c_{i_1, \cdots, i_m} \, (\cos^2 \theta)^{(m-3)} Q^{\alpha \beta} (Z^{i_1}\phi, Q_{\alpha \beta} (Z^{i_2}\theta,  Z^{i_3} \phi)) Z^{i_4} \theta \cdots Z^{i_m} \theta \nnb  \\
	&+ \, \cos^2 \theta \Big( Q^{\alpha \beta} (\phi , \p_\beta \phi) \p_\alpha Z^k \theta - Q^{\alpha \beta} (\phi , \p_\beta \theta) \p_\alpha Z^k \phi \Big), \label{Eq2-1} \\
	\text{and}\nnb\\
	& \cos^2 \theta \Box Z^k \phi - g_2^{\alpha \beta} \p_\alpha \p_\beta Z^k \phi -h_2^{\alpha \beta} \p_\alpha \p_\beta Z^k \theta \nnb  \\
	= &\sum_{\substack{|i_1| + \cdots + |i_m| \leq |k| \\ 2 \leq m \leq |k|+2}} d_{i_1, \cdots, i_m} \cos^2 \theta \, \Big(\frac{2 \sin (2\theta)}{1+ \cos (2 \theta)} \Big)^{(m-2)} \, Q(Z^{i_1}\theta, Z^{i_2} \phi) Z^{i_3} \theta \cdots Z^{i_m} \theta \nnb\\
	&  + \sum_{\substack{|i_1| + |i_2| + |i_3| \leq |k| \\ |i_2|, |i_3| < |k| }} e_{i_1, i_2, i_3} \cos^2 \theta \,\Big( Q^{\alpha \beta} (Z^{i_1}\theta, Q_{\alpha \beta} (Z^{i_2}\phi,  Z^{i_3} \theta)) \nnb + F(Z^{i_1}\theta, Z^{i_2}\phi, Z^{i_3} \theta) \Big) \nnb \\
	& + \, \cos^2 \theta \Big( Q^{\alpha \beta} (\theta , \p_\beta \theta) \p_\alpha Z^k \phi - Q^{\alpha \beta} (\theta , \p_\beta \phi) \p_\alpha Z^k \theta \Big), \label{Eq2-2}
\end{align}
where
\be\label{metric}
\begin{aligned}
	&g_1^{\alpha \beta} = \cos^2 \theta \big(\p^\alpha \phi \, \p^\beta \phi - g^{\alpha \beta} Q(\phi, \phi)\big),\,\,&& h_1^{\alpha \beta} = \cos^2 \theta \big( g^{\alpha \beta} Q(\theta, \phi)- \p^\alpha \theta \, \p^\beta \phi \big) , \\
	& g_2^{\alpha \beta} = \cos^2 \theta \big(  \p^\alpha \theta \, \p^\beta \theta - g^{\alpha \beta} Q(\theta, \theta)\big),\,\,
	&& h_2^{\alpha \beta} = \cos^2 \theta \big( g^{\alpha \beta} Q(\theta, \phi)- \p^\alpha \phi \, \p^\beta \theta \big).
\end{aligned}
\ee

\subsubsection{Energy scheme for $\tilde{L}$}
Firstly, we multiply both sides of the equations \eqref{Eq2-1} and \eqref{Eq2-2} by $\delta^l$ and derive the energy for $\theta_{k,l}$ and $\phi_{k,l}$ corresponding to vector field $\tilde{L} = L + ((L\theta)^2 + (L\phi)^2) \Lb$. The left-hand sides of \eqref{Eq2-1} and \eqref{Eq2-2} become
\be\label{eq1}
\big(\Box \theta_{k,l} - g_1^{\alpha \beta} \p_\alpha  \p_\beta \theta_{k,l}  -h_1^{\alpha\beta} \p_\alpha \p_\beta \phi_{k,l} \big) \tilde{L} \theta_{k,l},
\ee
and
\be\label{eq2}
 \big(\cos^2 \theta \, \Box \phi_{k,l} - g_2^{\alpha\beta} \p_\alpha  \p_\beta \phi_{k,l}  -h_2^{\alpha\beta} \p_\alpha \p_\beta \theta_{k,l} \big) \tilde{L} \phi_{k,l}.
\ee

For the terms $\Box \theta_{k,l} \cdot \tilde{L} \theta_{k,l}$ and $\cos^2 \theta \, \Box \phi_{k,l} \cdot \tilde{L} \phi_{k,l}$ in \eqref{eq1} and \eqref{eq2}, we use the vector field method and integrate in the domain $D_{u, \ub}$. Since
\be\label{Box}
\Box \theta_{k,l} \cdot \tilde{L} \theta_{k,l} = \Box \theta_{k,l} \cdot L  \theta_{k,l} + \Box \theta_{k,l} \cdot ((L\theta)^2 + (L\phi)^2) \Lb \theta_{k,l},
\ee
then according to \eqref{momentum}, \eqref{div} and \eqref{deformation}, the integration of $\Box \theta_{k,l} \cdot L  \theta_{k,l}$ becomes
\begin{align*}
& \iint_{D_{u, \ub}} \Box \theta_{k,l} \cdot L  \theta_{k,l} \\
= & \, \int_{\p D_{u, \ub}} T(L, N) [\theta_{k,l}] - \iint_{D_{u, \ub}} K^L [\theta_{k,l}] \\
= & \, \int_{C_u} T(L, L) [\theta_{k,l}] + \int_{\Cb_{\ub}} T(L, \Lb)  [\theta_{k,l}]- \int_{\Sigma_1} T\left(L, \frac{1}{2}(L + \Lb)\right) [\theta_{k,l}] - \iint_{D_{u, \ub}} K^L [\theta_{k,l}] \\
= & \, \int_{C_u} |L \theta_{k,l}|^2  +  \int_{\Cb_{\ub}} | \slashed{\nabla} \theta_{k,l} |^2 - \frac{1}{2} \int_{\Sigma_1} | L \theta_{k,l}|^2 + |\slashed{\nabla} \theta_{k,l} |^2 - \iint_{D_{u, \ub}} K^L [\theta_{k,l}] ,
\end{align*}
here we use the fact that the data on $C_0$ is zero and
\be\label{KL}
K^L [\theta_{k,l}] = \frac{1}{r} L \theta_{k,l}\, \Lb \theta_{k,l}
\ee
according to the third equation in \eqref{deformation}.

 For the second term of the right-hand side of \eqref{Box}, we take $X = \Lb$ and
\be\label{f}
f = (L\theta)^2 + (L\phi)^2,
\ee
 in \eqref{deformation2}, then
\begin{align*}
& \iint_{D_{u, \ub}} \Box \theta_{k,l} \cdot ((L\theta)^2 + (L\phi)^2) \Lb \theta_{k,l}  \\
= & \, \int_{\p D_{u, \ub}} T(f\Lb, N) [\theta_{k,l}] - \iint_{D_{u, \ub}} K^{f\Lb}  [\theta_{k,l}]\\
= & \, \int_{C_u} f \, T(\Lb, L)  [\theta_{k,l}]+ \int_{\Cb_{\ub}} f \, T(\Lb, \Lb) [\theta_{k,l}] - \int_{\Sigma_1} f \, T\left(\Lb, \frac{1}{2}(L + \Lb)\right) [\theta_{k,l}] - \iint_{D_{u, \ub}} K^{f\Lb}  [\theta_{k,l}]\\
= & \, \int_{C_u} f \, |\slashed{\nabla} \theta_{k,l} |^2 + \int_{\Cb_{\ub}} f \,  |\Lb \theta_{k,l}|^2 - \frac{1}{2} \int_{\Sigma_1} f \, \Big( |\slashed{\nabla} \theta_{k,l} |^2 + | \Lb \theta_{k,l}|^2 \Big) - \iint_{D_{u, \ub}} K^{f\Lb}  [\theta_{k,l}] ,
\end{align*}
where
\be\label{fLb}
 K^{f\Lb}  [\theta_{k,l}] = -\frac{1}{r} f \, L \theta_{k,l}\, \Lb \theta_{k,l} + Q( f ,  \theta_{k,l}) \, \Lb \theta_{k,l}- \frac{1}{2} \Lb f  \, |\nabla \theta_{k,l} |^2
\ee
according to \eqref{momentum} - \eqref{KX}.

Similarly, we compute the term $\cos^2 \theta \, \Box \phi_{k,l} \, \tilde{L} \phi_{k,l}$ as follows,
\begin{align*}
& \iint_{D_{u, \ub}}\cos^2 \theta \, \Box \phi_{k,l} \cdot \tilde{L} \phi_{k,l} \\
= & \,  \iint_{D_{u, \ub}} \Box \phi_{k,l} \cdot \cos^2 \theta \, L \phi_{k,l} + \Box \phi_{k,l} \cdot f \cos^2 \theta \, \Lb \phi_{k,l} \\
= & \, \int_{\p D_{u, \ub}} T(\cos^2 \theta L, N)  [\phi_{k,l}]+ T( f \cos^2 \theta \,  \Lb, N)[\phi_{k,l}] - \iint_{D_{u, \ub}} K^{\cos^2 \theta L}[\phi_{k,l}] + K^{f \cos^2 \theta   \Lb} [\phi_{k,l}]\\
= & \, \int_{C_u} \cos^2 \theta \big( |L \phi_{k,l}|^2 + f \, | \slashed{\nabla} \phi_{k,l} |^2 \big) +  \int_{\Cb_{\ub}} \cos^2 \theta \big( | \slashed{\nabla} \phi_{k,l} |^2 + f | \Lb \phi_{k,l}|^2 \big) \\
& - \frac{1}{2} \int_{\Sigma_1} \cos^2 \theta \big( | L \phi_{k,l}|^2 + |\slashed{\nabla} \phi_{k,l} |^2 + f \, (|\slashed{\nabla} \phi_{k,l} |^2 + | \Lb \phi_{k,l}|^2) \big) \\
&  - \iint_{D_{u, \ub}} K^{\cos^2 \theta L}[\phi_{k,l}] + K^{f \cos^2 \theta   \Lb} [\phi_{k,l}],
\end{align*}
where $f$ is defined in \eqref{f} and
\be\label{cosL}
K^{\cos^2 \theta L}[\phi_{k,l}] =  \frac{1}{r} \cos^2 \theta  L \phi_{k,l}\, \Lb \phi_{k,l} + Q(\cos^2  \theta, \phi_{k,l}) L \phi_{k,l} - \frac{1}{2}  L (\cos^2 \theta)  \, |\nabla \phi_{k,l}|^2,
\ee
\be\label{fcosLb}
K^{f \cos^2 \theta   \Lb} [\phi_{k,l}] = -\frac{1}{r} f \cos^2 \theta  L \phi_{k,l}\, \Lb \phi_{k,l} + Q(f \cos^2  \theta, \phi_{k,l}) \Lb \phi_{k,l} - \frac{1}{2}  \Lb (f \cos^2 \theta)  \, |\nabla \phi_{k,l}|^2
\ee
according to \eqref{momentum} - \eqref{KX}.

To address the remaining terms in equations \eqref{eq1} and \eqref{eq2}, we will follow a two-step approach as outlined below. We will now detail the calculation of the term $g_1^{\alpha \beta} \, \p_\alpha \p_\beta \theta_{k,l} \, \tilde{L} \theta_{k,l}$, analogous methods can be applied to derive similar results for the remaining terms.

\textbf{Step 1.} Establishing the differential identities.

\begin{align}
& g_1^{\alpha \beta} \, \p_\alpha \p_\beta \theta_{k,l} \, \tilde{L} \theta_{k,l} \nnb \\
 = & \, \p_\alpha ( g_1^{ \alpha \beta} \, \p_\beta \theta_{k,l} \,\tilde{L} \theta_{k,l}) - g_1^{\alpha\beta} \, \p_\beta \theta_{k,l} \, \p_\alpha \tilde{L} \theta_{k,l} - \p_\alpha g_1^{\alpha\beta} \, \p_\beta \theta_{k,l} \,\tilde{L} \theta_{k,l} \nnb \\
= & \, \p_\alpha ( g_1^{\alpha\beta} \, \p_\beta \theta_{k,l} \,\tilde{L} \theta_{k,l}) - g_1^{\alpha\beta} \, \p_\beta \theta_{k,l} \, \big( L \p_\alpha \theta_{k,l} + [ \p_\alpha, L] \theta_{k,l} \big) - f \,g_1^{\alpha\beta}\p_\beta \theta_{k,l} \big( \Lb \p_\alpha \theta_{k,l} + [ \p_\alpha, \Lb] \theta_{k,l} \big) \nnb \\
& -  g_1^{\alpha\beta} \p_\alpha f \, \p_\beta \theta_{k,l} \, \Lb \theta_{k,l}  - \p_\alpha g_1^{\alpha\beta} \, \p_\beta \theta_{k,l} \,\tilde{L} \theta_{k,l} \nnb \\
 = & \, \p_\alpha ( g_1^{\alpha\beta} \, \p_\beta \theta_{k,l} \,\tilde{L} \theta_{k,l})
  - \frac{1}{2} L ( g_1^{\alpha\beta} \, \p_\alpha \theta_{k,l} \, \p_\beta \theta_{k,l}) + \frac{1}{2} L g_1^{\alpha\beta} \, \p_\alpha \theta_{k,l} \, \p_\beta \theta_{k,l} - \frac{1}{r} g_1^{i \beta} \, \po_i \theta_{k,l} \,\p_\beta \theta_{k,l} \nnb \\
  &
  - \frac{1}{2} \Lb \Big( f \, g_1^{\alpha\beta} \p_\alpha \theta_{k,l} \, \p_\beta \theta_{k,l} \Big) + \frac{1}{2} \Lb \big( f \, g_1^{\alpha\beta} \big) \p_\alpha \theta_{k,l} \, \p_\beta \theta_{k,l} + \frac{1}{r} f g_1^{i \beta} \, \po_i \theta_{k,l} \,\p_\beta \theta_{k,l} \nnb \\
 & -  g_1^{\alpha\beta} \p_\alpha f \, \p_\beta \theta_{k,l} \, \Lb \theta_{k,l}  - \p_\alpha g_1^{\alpha\beta} \, \p_\beta \theta_{k,l} \,\tilde{L} \theta_{k,l} \nnb \\
   = & \, \p_\alpha ( g_1^{\alpha\beta} \, \p_\beta \theta_{k,l} \,\tilde{L} \theta_{k,l})
  - \frac{1}{2} L ( g_1^{\alpha\beta} \, \p_\alpha \theta_{k,l} \, \p_\beta \theta_{k,l}) - \frac{1}{2} \Lb \Big( f \, g_1^{\alpha\beta} \p_\alpha \theta_{k,l} \, \p_\beta \theta_{k,l} \Big) + Q^{(1)}, \label{g1L}
\end{align}
here we have used the symmetry of $g_{\alpha \beta}$ and
\begin{align}\label{Q1}
Q^{(1)} = & \, \frac{1}{2} L g_1^{\alpha\beta} \, \p_\alpha \theta_{k,l} \, \p_\beta \theta_{k,l}  + \frac{1}{2} \Lb \big( f \, g_1^{\alpha\beta} \big) \p_\alpha \theta_{k,l} \, \p_\beta \theta_{k,l} - \p_\alpha g_1^{\alpha\beta} \, \p_\beta \theta_{k,l} \,\tilde{L} \theta_{k,l} \nnb \\
   & \, -  g_1^{\alpha\beta} \p_\alpha f \, \p_\beta \theta_{k,l} \, \Lb \theta_{k,l}- \frac{1}{r} (1-f) g_1^{i \beta} \, \po_i \theta_{k,l} \,\p_\beta \theta_{k,l}.
\end{align}

\textbf{Step 2}. Integration by parts in the domain $D_{u, \ub}$.

We apply Stokes' theorem to integrate both sides of equation \eqref{g1L} over the domain $D_{u, \ub}$, and get
\begin{align*}
 \iint_{D_{u, \ub}} g_1^{\alpha \beta} \, \p_\alpha \p_\beta \theta_{k,l} \, \tilde{L} \theta_{k,l}
=  \int_{C_u} P^{(1)}_1 + \int_{\Cb_{\ub}}  P^{(1)}_2 - \frac{1}{2} \int_{\Sigma_1} (P^{(1)}_1  + P^{(1)}_2) + \iint_{D_{u, \ub}}  Q^{(1)},
\end{align*}
where
\begin{align}
  P^{(1)}_1 = & \, - g_1^{0 \beta} \, \p_\beta \theta_{k,l} \,\tilde{L} \theta_{k,l} + \omega_i g_1^{i \beta} \, \p_\beta \theta_{k,l} \,\tilde{L} \theta_{k,l} + f \, g_1^{\alpha\beta} \p_\alpha \theta_{k,l} \, \p_\beta \theta_{k,l}
  \nnb \\
  = & \, \cos^2 \theta \Big(L \phi \, Q(\phi,  \theta_{k,l}) -  Q (\phi, \phi) L \theta_{k,l} \Big) \tilde{L} \theta_{k,l} + f \, \cos^2 \theta  \Big( Q( \phi, \theta_{k,l})^2 -  Q (\phi, \phi)  Q(\theta_{k,l} ,  \theta_{k,l}) \Big) , \nnb \\
   P^{(1)}_2 = & \, - g_1^{0 \beta} \, \p_\beta \theta_{k,l} \,\tilde{L} \theta_{k,l} - \omega_i g_1^{i \beta} \, \p_\beta \theta_{k,l} \,\tilde{L} \theta_{k,l} + g_1^{\alpha\beta} \p_\alpha \theta_{k,l} \, \p_\beta \theta_{k,l}
  \nnb \\ =  & \,\cos^2 \theta \Big(\Lb \phi \, Q (\phi, \theta_{k,l}) -  Q (\phi, \phi) \Lb \theta_{k,l} \Big) \tilde{L} \theta_{k,l} + \cos^2 \theta  \Big( Q( \phi, \theta_{k,l})^2 -  Q (\phi, \phi)  Q(\theta_{k,l} ,  \theta_{k,l}) \Big), \label{PP1}
\end{align}
and $Q^{(1)}$ is defined in \eqref{Q1}.

Following analogous calculations, we obtain
\begin{align*}
& h_1^{\alpha \beta} \p_\alpha \p_\beta \phi_{k,l} \tilde{L} \theta_{k,l}\\
  = & \, \, \p_\alpha ( h_1^{\alpha \beta} \, \p_\beta \phi_{k,l} \,\tilde{L} \theta_{k,l})
  -  L ( h_1^{\alpha \beta} \, \p_\alpha \theta_{k,l} \, \p_\beta \phi_{k,l}) - \Lb \big( f \, h_1^{\alpha \beta}  \p_\alpha \theta_{k,l} \, \p_\beta \phi_{k,l} \big) \\
   &\,  +  h_1^{\alpha \beta} \, \p_\alpha \theta_{k,l} \, \tilde{L} \p_\beta \phi_{k,l} + Q^{(2)},
\end{align*}
where
\begin{align}\label{Q2}
Q^{(2)} = & \, L h_1^{\alpha \beta} \, \p_\alpha \theta_{k,l} \, \p_\beta \phi_{k,l} +  \Lb \big(f \, h_1^{\alpha \beta} \, \big) \p_\alpha \theta_{k,l} \, \p_\beta \phi_{k,l} - \p_\alpha h_1^{\alpha \beta} \, \p_\beta \phi_{k,l} \,\tilde{L} \theta_{k,l} \nnb \\
  & \,- h_1^{\alpha \beta} \p_\alpha f \, \p_\beta \phi_{k,l} \, \Lb \theta_{k,l} - \frac{1}{r} (1-f) h_1^{i \beta} \, \po_i \theta_{k,l} \,\p_\beta \phi_{k,l}.
\end{align}
After integrating by parts over the domain
$D_{u,\ub}$, we find
\begin{align}
\iint_{D_{u, \ub}} h_1^{\alpha \beta} \p_\alpha \p_\beta \phi_{k,l} \tilde{L} \theta_{k,l}
= & \, \int_{C_u} P^{(2)}_1 + \int_{\Cb_{\ub}}  P^{(2)}_2 - \frac{1}{2} \int_{\Sigma_1} \big(P^{(2)}_1 +  P^{(2)}_2 \big) + \iint_{D_{u, \ub}}  Q^{(2)}  \nnb \\
& + \iint_{D_{u, \ub}} h_1^{\alpha \beta} \, \p_\alpha \theta_{k,l} \, \tilde{L} \p_\beta \phi_{k,l}, \label{h1L}
\end{align}
where
\begin{align}
 P^{(2)}_1 = & \, \cos^2 \theta \Big(Q (\theta, \phi) L \phi_{k,l} - L \theta \, Q(\phi,  \phi_{k,l}) \Big) \tilde{L} \theta_{k,l} \nnb \\
 & \, + 2f \, \cos^2 \theta  \Big( Q( \theta, \phi)  Q(\theta_{k,l} ,  \phi_{k,l}) - Q(\theta ,  \theta_{k,l}) Q (\phi, \phi_{k,l}) \Big) , \nnb \\
 P^{(2)}_2 = & \, \cos^2 \theta \Big(Q (\theta, \phi) \Lb \phi_{k,l} - \Lb \theta \, Q(\phi,  \phi_{k,l}) \Big) \tilde{L} \theta_{k,l}  \nnb \\
  & \, + 2 \cos^2 \theta  \Big( Q( \theta, \phi)  Q(\theta_{k,l} ,  \phi_{k,l}) - Q(\theta ,  \theta_{k,l}) Q (\phi, \phi_{k,l}) \Big) . \label{PP2}
\end{align}

The remaining terms in equation \eqref{eq2} can be handled in the following manner
\begin{align*}
& g_2^{\alpha \beta} \, \p_\alpha \p_\beta \phi_{k,l} \, \tilde{L} \phi_{k,l} \\
   = & \, \p_\alpha ( g_2^{\alpha\beta} \, \p_\beta \phi_{k,l} \,\tilde{L} \phi_{k,l})
  - \frac{1}{2} L ( g_2^{\alpha\beta} \, \p_\alpha \phi_{k,l} \, \p_\beta \phi_{k,l}) - \frac{1}{2} \Lb \Big( f \, g_2^{\alpha\beta} \p_\alpha \phi_{k,l} \, \p_\beta \phi_{k,l} \Big) + Q^{(3)},
\end{align*}
where
\begin{align}\label{Q3}
Q^{(3)} = & \, \frac{1}{2} L g_2^{\alpha\beta} \, \p_\alpha \phi_{k,l} \, \p_\beta \phi_{k,l}  + \frac{1}{2} \Lb \big( f \, g_2^{\alpha\beta} \big) \p_\alpha \phi_{k,l} \, \p_\beta \phi_{k,l} - \p_\alpha g_2^{\alpha\beta} \, \p_\beta \phi_{k,l} \,\tilde{L} \phi_{k,l} \nnb \\
   & -  g_2^{\alpha\beta} \p_\alpha f \, \p_\beta \phi_{k,l} \, \Lb \phi_{k,l} - \frac{1}{r} (1-f) g_2^{i \beta} \, \po_i \phi_{k,l} \,\p_\beta \phi_{k,l},
\end{align}
and
\bes
 h_2^{\alpha \beta} \p_\alpha \p_\beta \theta_{k,l} \tilde{L} \phi_{k,l}
  = \p_\alpha ( h_2^{\alpha \beta} \, \p_\beta \theta_{k,l} \,\tilde{L} \phi_{k,l})  - h_2^{\alpha \beta} \, \p_\beta \theta_{k,l} \, \tilde{L}  \p_\alpha \phi_{k,l} + Q^{(4)},
\ees
where
\be\label{Q4}
Q^{(4)} = - \p_\alpha h_2^{\alpha \beta} \, \p_\beta \theta_{k,l} \,\tilde{L} \phi_{k,l} - h_2^{\alpha \beta} \p_\alpha f \, \p_\beta \theta_{k,l} \, \Lb \phi_{k,l}
  - \frac{1}{r} (1-f) h_2^{i \beta} \, \po_i \phi_{k,l} \,\p_\beta \theta_{k,l}.
\ee

Integrating by parts, we arrive at
\be
\iint_{D_{u, \ub}} g_2^{\alpha \beta} \, \p_\alpha \p_\beta \phi_{k,l} \, \tilde{L} \phi_{k,l}
= \int_{C_u} P^{(3)}_1 + \int_{\Cb_{\ub}}  P^{(3)}_2 - \frac{1}{2} \int_{\Sigma_1} (P^{(3)}_1  + P^{(3)}_2) + \iint_{D_{u, \ub}}  Q^{(3)},
\ee
and
\begin{align}
\iint_{D_{u, \ub}} h_2^{\alpha \beta} \p_\alpha \p_\beta \theta_{k,l} \tilde{L} \phi_{k,l}
= & \, \int_{C_u} P^{(4)}_1 + \int_{\Cb_{\ub}}  P^{(4)}_2 - \frac{1}{2} \int_{\Sigma_1} (P^{(4)}_1  + P^{(4)}_2) + \iint_{D_{u, \ub}}  Q^{(4)}  \nnb \\
& - \iint_{D_{u, \ub}} h_2^{\alpha \beta} \, \p_\beta \theta_{k,l} \, \tilde{L}  \p_\alpha \phi_{k,l}, \label{h2L}
\end{align}
where
\begin{align}
 & P^{(3)}_1 = \cos^2 \theta \Big(L \theta \, Q(\theta,  \phi_{k,l}) -  Q (\theta, \theta) L \phi_{k,l} \Big) \tilde{L} \phi_{k,l} + f \, \cos^2 \theta  \Big( Q( \theta, \phi_{k,l})^2 -  Q (\theta, \theta)  Q(\phi_{k,l} ,  \phi_{k,l}) \Big) , \nnb \\
  & P^{(3)}_2 = \cos^2 \theta \Big(\Lb \theta \, Q (\theta, \phi_{k,l}) -  Q (\theta, \theta) \Lb \phi_{k,l} \Big) \tilde{L} \phi_{k,l} + \cos^2 \theta  \Big( Q( \theta, \phi_{k,l})^2 -  Q (\theta, \theta)  Q(\phi_{k,l} ,  \phi_{k,l}) \Big),\label{PP3}
\end{align}
and
\begin{align}
   & P^{(4)}_1 = \cos^2 \theta \Big(Q (\theta, \phi) L \theta_{k,l} - L \phi \, Q(\theta,  \theta_{k,l}) \Big) \tilde{L} \phi_{k,l},  \nnb \\
  & P^{(4)}_2 = \cos^2 \theta \Big(Q (\theta, \phi) \Lb \theta_{k,l} - \Lb \phi \, Q(\theta,  \theta_{k,l}) \Big) \tilde{L} \phi_{k,l}. \label{PP4}
\end{align}

We proceed to derive the energy terms associated with the vector field $\tilde{L}$. Given that $h_1^{\alpha \beta} = h_2^{\beta \alpha}$ according to \eqref{metric}, the highest order derivatives of $k+2$ in the expressions $h_1^{\alpha \beta} \, \p_\alpha \theta_{k,l} \, \tilde{L} \p_\beta \phi_{k,l}$ from equation \eqref{h1L}, as well as
$- h_2^{\alpha \beta} \, \p_\beta \theta_{k,l} \, \tilde{L}  \p_\alpha \phi_{k,l}$ from \eqref{h2L} cancel out. Combining the aforementioned integral equations, we obtain
\begin{align}
&\iint_{D_{u, \ub}} \big(\Box \theta_{k,l} - g_1^{\alpha \beta} \p_\alpha  \p_\beta \theta_{k,l}  -h_1^{\alpha \beta} \p_\alpha \p_\beta \phi_{k,l} \big) \tilde{L} \theta_{k,l} \nnb \\
& \qquad \quad + \big(\cos^2 \theta \, \Box \phi_{k,l} - g_2^{\alpha \beta} \p_\alpha \p_\beta \phi_{k,l}  -h_2^{\alpha \beta} \p_\alpha \p_\beta \theta_{k,l} \big) \tilde{L} \phi_{k,l} \nnb \\
= & \, E^2_{k,l}(\theta, \phi, \tilde{L}) +  \Eb^{2}_{k,l}(\theta, \phi, \tilde{L}) -  E^{2}_{k,l}\big|_{t=1}(\theta, \phi, \tilde{L})  \nnb \\
&-\iint_{D_{u, \ub}} K^L [\theta_{k,l}]+ K^{f \Lb} [\theta_{k,l}]+ K^{\cos^2 \theta L} [\phi_{k,l}]+ K^{f \cos^2 \theta \Lb}[\phi_{k,l}] - Q^{(1)} - Q^{(2)} - Q^{(3)} - Q^{(4)} , \label{energy1}
\end{align}
where  $K^L [\theta_{k,l}]$, $ K^{f \Lb} [\theta_{k,l}]$, $ K^{\cos^2 \theta L} [\phi_{k,l}]$, $K^{f \cos^2 \theta \Lb}[\phi_{k,l}]$ are defined in \eqref{KL}, \eqref{fLb}, \eqref{cosL}, \eqref{fcosLb}, $ Q^{(1)} $, $ Q^{(2)} $, $ Q^{(3)} $, $ Q^{(4)} $ are defined in \eqref{Q1}, \eqref{Q2}, \eqref{Q3}, \eqref{Q4},
\begin{align}
 E^2_{k,l}(\theta, \phi, \tilde{L})
 = & \int_{C_{u}} |L \theta_{k,l}|^2 + f \, |\slashed{\nabla} \theta_{k,l} |^2 + \cos^2 \theta \big( |L \phi_{k,l}|^2 + f \, | \slashed{\nabla} \phi_{k,l} |^2 \big)  \nnb \\
 & \qquad -\big(  P^{(1)}_1 +  P^{(2)}_1 + P^{(3)}_1 +  P^{(4)}_1 \big), \label{Energy1}
\end{align}
\begin{align}
  \Eb^{2}_{k,l}(\theta, \phi, \tilde{L})
   = & \int_{\Cb_{\ub}}  | \slashed{\nabla} \theta_{k,l} |^2 +  f \,  |\Lb \theta_{k,l}|^2 + \cos^2 \theta \big( | \slashed{\nabla} \phi_{k,l} |^2 + f | \Lb \phi_{k,l}|^2 \big)  \nnb \\
   & \qquad - \big( P^{(1)}_2 +  P^{(2)}_2 + P^{(3)}_2 +  P^{(4)}_2 \big), \label{Energy2}
\end{align}
and
\begin{align*}
   E^{2}_{k,l} \big|_{t=1}(\theta, \phi, \tilde{L}) = & \, \frac{1}{2} \int_{\Sigma_1} \Big(  | L \theta_{k,l}|^2 + |\slashed{\nabla} \theta_{k,l} |^2 +
  f \, \big( |\slashed{\nabla} \theta_{k,l} |^2 + | \Lb \theta_{k,l}|^2 \big) \\
   & \qquad + \cos^2 \theta \big( | L \phi_{k,l}|^2 + |\slashed{\nabla} \phi_{k,l} |^2 + f \, (|\slashed{\nabla} \phi_{k,l} |^2 + | \Lb \phi_{k,l}|^2) \big) \\
   & \qquad - \big( P^{(1)}_1 +  P^{(2)}_1 + P^{(3)}_1 +  P^{(4)}_1 + P^{(1)}_2 +  P^{(2)}_2 + P^{(3)}_2 +  P^{(4)}_2 \big) \Big),
\end{align*}
here, $P^{(j)}_i$ are defined in \eqref{PP1}, \eqref{PP2}, \eqref{PP3} and \eqref{PP4}.

\begin{lemma}\label{lem-en1}
With the bootstrap assumption \eqref{assump}, we have
\begin{align}
  E^2_{k,l}(\theta, \phi, \tilde{L}) \sim \int_{C_{u}} ( L \theta_{k, l} )^2 + ( L \phi_{k, l} )^2 + \big( (L\theta)^2 + (L\phi)^2 \big)& \Big( | \slashed{\nabla} \theta_{k, l} |^2 + | \slashed{\nabla} \phi_{k, l} |^2 \nnb  \\
 & + \big( L \phi \Lb \theta_{k, l} - L \theta \Lb \phi_{k, l}\big)^2 \Big), \label{Energy11}
\end{align}
and
\begin{align}
  \Eb^{2}_{k,l}(\theta, \phi, \tilde{L})  \sim \int_{\Cb_{\ub}} |\slashed{\nabla} \theta_{k,l}|^2 + |\slashed{\nabla} \phi_{k,l}|^2 & \, + \big( (L\theta)^2 + (L\phi)^2 \big) \big( (\Lb \theta_{k,l})^2 + (\Lb \phi_{k,l})^2 \big)  \nnb \\
 & + (\Lb \phi \, L \theta_{k,l} - \Lb \theta \, L \phi_{k,l} )^2. \label{Energy12}
\end{align}
\end{lemma}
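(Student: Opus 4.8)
The plan is to expand the defining flux expressions \eqref{Energy1}--\eqref{Energy2} in the null frame $\{L,\Lb,\slashed{\nabla}\}$ and read off their leading parts, discarding (by Cauchy--Schwarz) every contribution that is smaller by a positive power of $\delta$; the smallness is provided by Lemmas \ref{Sobolev} and \ref{Sob-L}. I treat \eqref{Energy11} in detail and obtain \eqref{Energy12} by the same argument on $\Cb_{\ub}$ with $L\leftrightarrow\Lb$. From \eqref{Energy1}, with $f=(L\theta)^2+(L\phi)^2$,
\bes
E^2_{k,l}(\theta,\phi,\tilde L)=\int_{C_u}\Big(|L\theta_{k,l}|^2+f|\slashed{\nabla}\theta_{k,l}|^2+\cos^2\theta\big(|L\phi_{k,l}|^2+f|\slashed{\nabla}\phi_{k,l}|^2\big)\Big)-\int_{C_u}\sum_{j=1}^{4}P^{(j)}_1,
\ees
with $P^{(j)}_1$ as in \eqref{PP1}--\eqref{PP4}. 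By Lemmas \ref{Sob-L} and \ref{Sobolev}, in region II one has $\tfrac12\le\cos^2\theta\le1$ (for $\delta$ small), $|L\theta|,|L\phi|\lesssim\delta M$, $|\slashed{\nabla}\theta|,|\slashed{\nabla}\phi|\lesssim\delta^{3/4}M$, $|\Lb\theta|,|\Lb\phi|\lesssim M$, hence $f\lesssim\delta^2M^2$ and $|Q(\phi,\phi)|,|Q(\theta,\theta)|,|Q(\theta,\phi)|\lesssim\delta M$, uniformly in $k,l,u,\ub$ (the $\sim$-constants are then universal once $\delta_0$ is chosen small depending on $M$).

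Next I would substitute $Q(\xi,\chi)=-\tfrac12(L\xi\,\Lb\chi+\Lb\xi\,L\chi)+\slashed{\nabla}\xi\cdot\slashed{\nabla}\chi$ into each null form in the $P^{(j)}_1$, together with $\tilde L\theta_{k,l}=L\theta_{k,l}+f\Lb\theta_{k,l}$ and $\tilde L\phi_{k,l}=L\phi_{k,l}+f\Lb\phi_{k,l}$, and group $-\sum_jP^{(j)}_1$ by the number of factors it contains among the ``bad'' derivatives $\{\Lb\theta_{k,l},\Lb\phi_{k,l}\}$. The part with no such factor is quadratic in $\{L\theta_{k,l},L\phi_{k,l},\slashed{\nabla}\theta_{k,l},\slashed{\nabla}\phi_{k,l}\}$ and, by the smallness above, each of its coefficients carries a positive power of $\delta$ relative to the corresponding entry of the main density, so one Cauchy--Schwarz bounds it by $C\delta^{1/4}M$ times the main density. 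For the bad part, set
\bes
V=L\phi\,L\theta_{k,l}-L\theta\,L\phi_{k,l},\qquad W=L\phi\,\Lb\theta_{k,l}-L\theta\,\Lb\phi_{k,l}.
\ees
The claim, to be checked by the lengthy but elementary computation, is that modulo terms bounded by a small multiple of the main density and of $fW^2$, the $\Lb$-quadratic part of $-\sum_jP^{(j)}_1$ equals $\tfrac14 fW^2$ while its $\Lb$-linear part equals $\tfrac12 VW$; the diagonal coefficients $\tfrac14(L\phi)^2$ and $\tfrac14(L\theta)^2$ come from $P^{(1)}_1$ and $P^{(3)}_1$, the off-diagonal $-\tfrac12 L\theta L\phi$ from $P^{(4)}_1$ (and $P^{(2)}_1$), and — crucially — the potentially dangerous linear pieces such as $(L\phi)^2L\theta_{k,l}\Lb\theta_{k,l}$ (linear in $\Lb\theta_{k,l}$ with no partnering $\Lb\phi_{k,l}$) cancel against contributions of $P^{(2)}_1,P^{(4)}_1$ so as to reassemble into $\tfrac12VW$, with the \emph{same} transverse combination $W$ as in the quadratic part.

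It then remains to complete the square: $\tfrac14 fW^2+\tfrac12 VW=\tfrac14 f(W+V/f)^2-V^2/(4f)$, and since $V^2\le f\big((L\theta_{k,l})^2+(L\phi_{k,l})^2\big)$ by Cauchy--Schwarz, the subtracted term $V^2/(4f)\le\tfrac14\big((L\theta_{k,l})^2+(L\phi_{k,l})^2\big)$ is absorbed by the main density, while $(L\theta)^2,(L\phi)^2\le f$ gives $f(W+V/f)^2\sim fW^2$ up to a multiple of $(L\theta_{k,l})^2+(L\phi_{k,l})^2$. Combining with $\cos^2\theta\sim1$ yields
\bes
E^2_{k,l}(\theta,\phi,\tilde L)\sim\int_{C_u}|L\theta_{k,l}|^2+|L\phi_{k,l}|^2+f\big(|\slashed{\nabla}\theta_{k,l}|^2+|\slashed{\nabla}\phi_{k,l}|^2+W^2\big),
\ees
which is \eqref{Energy11}. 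For \eqref{Energy12} one runs the same argument on $\Cb_{\ub}$: there $T(\tilde L,\Lb)[\psi]=|\slashed{\nabla}\psi|^2+f|\Lb\psi|^2$ is the main density (so the full quadratic form in $\Lb\theta_{k,l},\Lb\phi_{k,l}$ is already present), $\slashed{\nabla}$ and $\Lb$ are the good directions, $L\theta_{k,l},L\phi_{k,l}$ enter only through $\widetilde W=\Lb\phi\,L\theta_{k,l}-\Lb\theta\,L\phi_{k,l}$, and the $L$-quadratic and $L$-linear parts of $-\sum_jP^{(j)}_2$ are $\tfrac14\widetilde W^{\,2}$ and $\tfrac12\widetilde V\widetilde W$ with $\widetilde V$ a combination of good derivatives carrying a small factor, so the same completion of the square applies.

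The crux is the middle step: verifying the two cancellations — that the $\Lb$-quadratic form collapses to the \emph{rank-one} density $\tfrac14 fW^2$ rather than a generic positive-definite form, and that the $\Lb$-linear terms reorganize as $\tfrac12VW$ with exactly the transverse combination $W$ appearing in the quadratic part. Both are forced by the causality of $\tilde L$ with respect to the effective metric $g^{\alpha\beta}(\p\phi)$, so that $E^2_{k,l}$ is a priori nonnegative and degenerate precisely in the directions transverse to $W$; but the bookkeeping is delicate because the individual densities $P^{(j)}_1$ each contribute misleading pieces that only cancel when summed, and a single sign slip would destroy positivity. The accompanying quantitative point — that the completed square stays comparable to $fW^2$ rather than degenerating further — rests on the elementary inequalities $(L\theta)^2\le f$ and $(L\phi)^2\le f$.
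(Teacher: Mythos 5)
Your proposal is correct and follows essentially the same route as the paper: the paper likewise expands $-\sum_j P^{(j)}_1$ in the null frame, absorbs every piece with two good derivatives on $\theta_{k,l},\phi_{k,l}$ via the $L^\infty$ bounds, and verifies (its identity \eqref{bad1}) that the remaining $\Lb$-terms are exactly $\tfrac14 f\cos^2\theta\,W^2+\tfrac12\cos^2\theta\,VW$ with your $V,W$, the cross term then being absorbed by a weighted Cauchy inequality resting on $V^2\le f\big((L\theta_{k,l})^2+(L\phi_{k,l})^2\big)$ and $(L\theta)^2,(L\phi)^2\le f$ — the same inequalities underlying your completion of the square. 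The computation you defer is precisely the paper's display \eqref{P4}, and your predicted outcome matches it; the only minor inaccuracy is that on $\Cb_{\ub}$ the cross-term partner of $\widetilde W$ is $f(\Lb\phi\,\Lb\theta_{k,l}-\Lb\theta\,\Lb\phi_{k,l})$, a bad-derivative combination rather than a good-derivative one, but the prefactor $f$ together with $|\Lb\theta|,|\Lb\phi|\lesssim M$ supplies exactly the smallness your absorption step requires.
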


\begin{proof}
We begin with addressing the dominant quadratic terms associated with $L$ and $\slashed{\nabla}$. Given that $\delta$ is sufficiently small and $\cos^2\theta  \sim 1$. Consequently, for the initial term on the right-hand side of equation \eqref{Energy1}, we find that
\begin{align}
& |L \theta_{k,l}|^2 + f \, |\slashed{\nabla} \theta_{k,l} |^2 + \cos^2 \theta \big( |L \phi_{k,l}|^2 + f \, | \slashed{\nabla} \phi_{k,l} |^2 \big) \nnb \\
\sim & \,  ( L \theta_{k, l} )^2 + ( L \phi_{k, l} )^2 + \big( (L\theta)^2 + (L\phi)^2 \big) \big( | \slashed{\nabla} \theta_{k, l} |^2 + | \slashed{\nabla} \phi_{k, l} |^2 \big). \label{quadratic}
\end{align}
We will further assume that $\delta^{\frac{1}{2}} M < 1$.
The second line in the right-hand side of \eqref{Energy1} becomes
\begin{align}
 & \, - \Big(P^{(1)}_1 +  P^{(2)}_1 + P^{(3)}_1 +  P^{(4)}_1 \Big) \nnb \\
 = & \, \cos^2 \theta \Big( Q (\phi, \phi) L \theta_{k,l} - L \phi \, Q(\phi,  \theta_{k,l}) \Big) L \theta_{k,l} + \cos^2 \theta \Big( L \theta \, Q(\phi,  \phi_{k,l}) - Q (\theta, \phi) L \phi_{k,l} \Big) L \theta_{k,l} \nnb \\
 &  +  \cos^2 \theta \Big(Q (\theta, \theta) L \phi_{k,l} - L \theta \, Q(\theta,  \phi_{k,l}) \big) L \phi_{k,l} +  \cos^2 \theta \Big( L \phi \, Q(\theta,  \theta_{k,l}) - Q (\theta, \phi) L \theta_{k,l} \Big) L \phi_{k,l} \nnb \\
 & + f  \cos^2 \theta \Big( Q (\phi, \phi) L \theta_{k,l} - L \phi \, Q(\phi,  \theta_{k,l}) \Big) \Lb \theta_{k,l} + f  \cos^2 \theta \Big( L \theta \, Q(\phi,  \phi_{k,l}) - Q (\theta, \phi) L \phi_{k,l} \Big) \Lb \theta_{k,l} \nnb \\
 & + f  \cos^2 \theta \Big(Q (\theta, \theta) L \phi_{k,l} - L \theta \, Q(\theta,  \phi_{k,l}) \big) \Lb \phi_{k,l} + f \cos^2 \theta \Big( L \phi \, Q(\theta,  \theta_{k,l}) - Q (\theta, \phi) L \theta_{k,l} \Big) \Lb \phi_{k,l} \nnb \\
 &  - f  \cos^2 \theta \Big( Q( \phi, \theta_{k,l})^2 -  Q (\phi, \phi)  Q(\theta_{k,l} ,  \theta_{k,l}) \Big)
 - f  \cos^2 \theta  \Big( Q( \theta, \phi_{k,l})^2 -  Q (\theta, \theta)  Q(\phi_{k,l} ,  \phi_{k,l}) \Big) \nnb \\
 & - 2f  \cos^2 \theta  \Big( Q( \theta, \phi)  Q(\theta_{k,l} ,  \phi_{k,l}) - Q(\theta ,  \theta_{k,l}) Q (\phi, \phi_{k,l}) \Big)\nnb \\
 = & \, \cos^2 \theta \Big( Q (\phi, \phi) L \theta_{k,l} - L \phi \, Q(\phi,  \theta_{k,l}) \Big) L \theta_{k,l} + \cos^2 \theta \Big( L \theta \, Q(\phi,  \phi_{k,l}) - Q (\theta, \phi) L \phi_{k,l} \Big) L \theta_{k,l} \nnb \\
 &  +  \cos^2 \theta \Big(Q (\theta, \theta) L \phi_{k,l} - L \theta \, Q(\theta,  \phi_{k,l}) \big) L \phi_{k,l} +  \cos^2 \theta \Big( L \phi \, Q(\theta,  \theta_{k,l}) - Q (\theta, \phi) L \theta_{k,l} \Big) L \phi_{k,l} \nnb \\
 & + f \cos^2 \theta \Big( Q(\phi , \phi) | \slashed{\nabla} \theta_{k, l} |^2 + Q(\theta , \theta) | \slashed{\nabla} \phi_{k, l} |^2 - 2Q(\theta, \phi) \po^i \theta_{k, l} \, \po_i \phi_{k, l} - (\frac{1}{2} \Lb \phi L \theta_{k, l} - \po^i \phi \, \po_i \theta_{k, l} )^2 \nnb \\
 & \qquad - (\frac{1}{2} \Lb \theta L \phi_{k, l} - \po^i \theta \, \po_i \phi_{k, l} )^2  + 2 (-\frac{1}{2} \Lb \phi L \phi_{k, l} + \po^i \phi \, \po_i \phi_{k, l} ) (-\frac{1}{2} \Lb \theta L \theta_{k, l} + \po^i \theta \, \po_i \theta_{k, l} ) \Big) \nnb \\
 & + \frac{1}{4} f \cos^2 \theta \big(L \phi  \Lb \theta_{k, l} - L \theta  \Lb \phi_{k, l} \big)^2
 . \label{P4}
\end{align}

First, we demonstrate that the terms involving two good derivatives acting on $\theta_{k,l}$ or $\phi_{k,l}$ are dominated by the quadratic terms presented in \eqref{quadratic}.

For the first two lines in \eqref{P4}, we only need to deal with the first term, and the rest can be treated in a similar way.
\begin{align}
& \, \big(Q (\phi, \phi) L \theta_{k,l} - L \phi \, Q(\phi,  \theta_{k,l}) \big) L \theta_{k,l}  \nnb \\
= & \, \big( - \frac{1}{2} L \phi \Lb \phi +  \po^i \phi \, \po_i \phi \big)( L \theta_{k,l} )^2 -  L \phi \, \po^i \phi \, \po_i \theta_{k,l}  \, L \theta_{k,l}  + \frac{1}{2} (L \phi)^2 \, L \theta_{k,l} \, \Lb \theta_{k,l} .\label{P4-1}
\end{align}
Utilizing the $L^{\infty}$ estimates from Lemma \ref{Sobolev}, we can handle the first line in \eqref{P4-1} as follows
\begin{align}
 & \, \Big| \big( - \frac{1}{2} L \phi \Lb \phi +  \po^i \phi \, \po_i \phi \big)( L \theta_{k,l} )^2 -  L \phi \, \po^i \phi \, \po_i \theta_{k,l}  \, L \theta_{k,l}\Big| \nnb \\
 \lesssim & \, (\delta |\ub|^{-\frac{5}{2}} M^2 + \delta^{\frac{3}{2}} |\ub|^{-3} M^2) ( L \theta_{k,l} )^2 + \delta^{\frac{3}{4}} |\ub|^{-\frac{3}{2}} M \big( (L \phi )^2 |\slashed{\nabla} \theta_{k,l}|^2 + ( L \theta_{k,l} )^2 \big)  \nnb\\
 \lesssim & \, \delta^{\frac{1}{2}}  M \big( ( L \theta_{k,l} )^2 + (L \phi )^2 |\slashed{\nabla} \theta_{k,l}|^2 \big), \label{P4-11}
\end{align}
where the term $ \delta^{\frac{1}{2}}  M \, ( L \theta_{k,l} )^2$ can be absorbed by the leading term $ ( L \theta_{k,l} )^2$, and the second term
$\delta^{\frac{1}{2}}  M \, (L \phi )^2 |\slashed{\nabla} \theta_{k,l}|^2$ can be absorbed by $\big( (L\theta)^2 + (L\phi)^2 \big) | \slashed{\nabla} \theta_{k, l} |^2  $.

For the first term in the third line of \eqref{P4}, we have
\begin{align}
&  \Big| f \cos^2 \theta \, Q(\phi , \phi) | \slashed{\nabla} \theta_{k, l} |^2 \Big|  \nnb \\
\lesssim & \, \big( (L\theta)^2 + (L\phi)^2 \big) \, \Big| -L \phi \, \Lb \phi +  |\slashed{\nabla} \phi |^2 \Big| \, | \slashed{\nabla} \theta_{k, l} |^2 \nnb \\
\lesssim  & \, \delta  M^2 \big( (L\theta)^2 + (L\phi)^2 \big)  | \slashed{\nabla} \theta_{k, l} |^2, \label{P4-2}
\end{align}
which can also be absorbed by $\big( (L\theta)^2 + (L\phi)^2 \big) | \slashed{\nabla} \theta_{k, l} |^2  $.

Similarly, the terms $ f \cos^2 \theta \, Q(\theta , \theta) | \slashed{\nabla} \phi_{k, l} |^2 $ and $ f \cos^2 \theta \, \big( - 2Q(\theta, \phi) \po^i \theta_{k, l} \, \po_i \phi_{k, l} \big) $ can be controlled by $\big( (L\theta)^2 + (L\phi)^2 \big) | \slashed{\nabla} \phi_{k, l} |^2  $ and $\big( (L\theta)^2 + (L\phi)^2 \big) | \slashed{\nabla} \theta_{k, l} |^2  $ in \eqref{quadratic}.

For the last term in the third line of \eqref{P4}, we have
\begin{align*}
& \, f \cos^2 \theta \,\left(\frac{1}{2} \Lb \phi L \theta_{k, l} - \po^i \phi \, \po_i \theta_{k, l} \right)^2 \nnb \\
 \lesssim & \, \big( (L\theta)^2 + (L\phi)^2 \big) \, |\Lb \phi |^2 |L \theta_{k, l}|^2 + | \slashed{\nabla} \phi |^2 \, \big( (L\theta)^2 + (L\phi)^2 \big) \, | \slashed{\nabla} \theta_{k, l} |^2 \nnb \\
 \lesssim & \, \delta^2  M^4 \, |L \theta_{k, l}|^2  + \delta^{\frac{3}{2}}  M^2 \, \big( (L\theta)^2 + (L\phi)^2 \big) \, | \slashed{\nabla} \theta_{k, l} |^2,
\end{align*}
which can be absorbed by the leading term $ ( L \theta_{k,l} )^2$ and $\big( (L\theta)^2 + (L\phi)^2 \big) | \slashed{\nabla} \theta_{k, l} |^2 $.

The terms in the fourth line of \eqref{P4} are similarly controlled by the quadratic terms in \eqref{quadratic}, following the same calculation as previously discussed.

Now, we address the terms in equation \eqref{P4} that include at least one undesirable derivative ${\Lb }$ of $\theta_{k,l}$ or $\phi_{k,l}$.  Following the same calculations as \eqref{P4-1} and combining the last line of \eqref{P4}, we derive these terms as
\begin{align}
 &  \, \frac{1}{2} \cos^2 \theta \Big( (L \phi)^2 \, L \theta_{k,l} \, \Lb \theta_{k,l} + (L \theta)^2 \, L \phi_{k,l} \, \Lb \phi_{k,l} - L \theta \, L \phi \, L \theta_{k,l} \, \Lb \phi_{k,l} - L \theta \, L \phi \, L \phi_{k,l} \, \Lb \theta_{k,l} \Big) \nnb \\
 & + \frac{1}{4} f \cos^2 \theta \big(L \phi  \Lb \theta_{k, l} - L \theta  \Lb \phi_{k, l} \big)^2  \nnb \\
  = & \, \frac{1}{4} f \cos^2 \theta \big(L \phi  \Lb \theta_{k, l} - L \theta  \Lb \phi_{k, l} \big)^2 + \frac{1}{2} \cos^2 \theta \big(L \phi  \Lb \theta_{k, l} - L \theta  \Lb \phi_{k, l} \big) \big(L \phi  L \theta_{k, l} - L \theta  L \phi_{k, l} \big).  \label{bad1}
\end{align}

For the second term, we apply the Cauchy inequality
\begin{align*}
&  \, \Big| \frac{1}{2} \cos^2 \theta \big( L \phi \, \Lb \theta_{k,l} - L \theta \, \Lb \phi_{k,l} \big) \big( L \phi \, L \theta_{k,l} - L \theta \, L \phi_{k,l} \big) \Big|  \\
\leq & \,  \frac{9}{64}f \big( L \phi \, \Lb \theta_{k,l} - L \theta \, \Lb \phi_{k,l} \big)^2 + \frac{4}{9f}  \big( L \phi \, L \theta_{k,l} - L \theta \, L \phi_{k,l} \big)^2 \\
\leq & \,  \frac{9}{64} f \big( L \phi \, \Lb \theta_{k,l} - L \theta \, \Lb \phi_{k,l} \big)^2 + \frac{8}{9f}  \Big((L\phi)^2 \, (L \theta_{k,l})^2 + (L\theta)^2 \, (L \phi_{k,l})^2 \Big)  \\
\leq & \,  \frac{9}{64}f \big( L \phi \, \Lb \theta_{k,l} - L \theta \, \Lb \phi_{k,l} \big)^2 + \frac{8}{9} \big( (L \theta_{k,l})^2 + (L \phi_{k,l})^2 \big),
\end{align*}
which can be absorbed by the first term in the last line of \eqref{bad1} and the leading quadratic terms of $(L \theta_{k,l})^2$ and $(L \phi_{k,l})^2 $ in \eqref{quadratic}.

Therefore, we arrive at
\begin{align*}
 E^2_{k,l}(\theta, \phi, \tilde{L}) \sim \int_{C_{u}} ( L \theta_{k, l} )^2 + ( L \phi_{k, l} )^2 + \big( (L\theta)^2 + (L\phi)^2 \big)& \Big( | \slashed{\nabla} \theta_{k, l} |^2 + | \slashed{\nabla} \phi_{k, l} |^2 \\
 & + \big( L \phi \Lb \theta_{k, l} - L \theta\Lb \phi_{k, l}\big)^2 \Big).
\end{align*}

For \eqref{Energy12}, by the definition of $\Eb^{2}_{k,l}$ in \eqref{Energy2}, the leadingly quadratic terms corresponding to $\Lb$ and $\slashed{\nabla}$ in the first line of the right-hand side of
\eqref{Energy2} are
\begin{align}
 & \, | \slashed{\nabla} \theta_{k,l} |^2 +  f \,  |\Lb \theta_{k,l}|^2 + \cos^2 \theta \big( | \slashed{\nabla} \phi_{k,l} |^2 + f | \Lb \phi_{k,l}|^2 \big) \nnb  \\
  \sim & \,|\slashed{\nabla} \theta_{k,l}|^2 + |\slashed{\nabla} \phi_{k,l}|^2  \, + \big( (L\theta)^2 + (L\phi)^2 \big) \big( (\Lb \theta_{k,l})^2 + (\Lb \phi_{k,l})^2 \big). \label{quadratic1}
\end{align}
The second line of the right-hand side of \eqref{Energy2} becomes
\begin{align}
& \,- \big( P^{(1)}_2 +  P^{(2)}_2 + P^{(3)}_2 +  P^{(4)}_2 \big)  \nnb \\
= & \, \cos^2 \theta \Big( Q(\phi , \phi) | \slashed{\nabla} \theta_{k, l} |^2 + Q(\theta , \theta) | \slashed{\nabla} \phi_{k, l} |^2 - 2Q(\theta, \phi) \po^i \theta_{k, l} \, \po_i \phi_{k, l} - (\frac{1}{2} L \phi \Lb \theta_{k, l} - \po^i \phi \, \po_i \theta_{k, l} )^2 \nnb \\
 & \qquad - (\frac{1}{2} L \theta \Lb \phi_{k, l} - \po^i \theta \, \po_i \phi_{k, l} )^2  + 2 (-\frac{1}{2} L \theta \Lb \theta_{k, l} + \po^i \theta \, \po_i \theta_{k, l} ) (-\frac{1}{2} L \phi \Lb \phi_{k, l} + \po^i \phi \, \po_i \phi_{k, l} ) \Big) \nnb  \\
 & \, + f \cos^2 \theta \Big( Q (\phi, \phi) \Lb \theta_{k,l} - \Lb \phi \, Q(\phi,  \theta_{k,l}) \Big) \Lb \theta_{k,l} + f \cos^2 \theta \Big( \Lb \theta \, Q(\phi,  \phi_{k,l}) - Q (\theta, \phi) \Lb \phi_{k,l} \Big) \Lb \theta_{k,l} \nnb \\
 & \, + f \cos^2 \theta \Big(Q (\theta, \theta) \Lb \phi_{k,l} - \Lb \theta \, Q(\theta,  \phi_{k,l}) \big) \Lb \phi_{k,l} + f \cos^2 \theta \Big( \Lb \phi \, Q(\theta,  \theta_{k,l}) - Q (\theta, \phi) \Lb \theta_{k,l} \Big) \Lb \phi_{k,l} \nnb \\
 &  \, + \frac{1}{4} \cos^2 \theta \big(\Lb \phi  L \theta_{k, l} - \Lb \theta  L \phi_{k, l} \big)^2 \nnb \\
= & \, \cos^2 \theta \Big( Q(\phi , \phi) | \slashed{\nabla} \theta_{k, l} |^2 + Q(\theta , \theta) | \slashed{\nabla} \phi_{k, l} |^2 - 2Q(\theta, \phi) \po^i \theta_{k, l} \, \po_i \phi_{k, l} - (\po^i \phi \, \po_i \theta_{k, l} )^2 - (\po^i \theta \, \po_i \phi_{k, l} )^2 \nnb \\
 &  \quad +2 (\po^i \theta \, \po_i \theta_{k, l}) ( \po^i \phi \, \po_i \phi_{k, l} ) \Big) \nnb \\
 & \, + \cos^2 \theta \Big( L \phi \, \po^i \phi \, \po_i \theta_{k, l} \, \Lb \theta_{k, l} +  L \theta \, \po^i \theta \, \po_i \phi_{k, l} \, \Lb \phi_{k, l} -  L \phi \, \po^i \theta \, \po_i \theta_{k, l} \, \Lb \phi_{k, l}-  L \theta \, \po^i \phi \, \po_i \phi_{k, l} \, \Lb \theta_{k, l} \nnb \\
 &  \quad + f \big( - \Lb \phi \, \po^i \phi \, \po_i \theta_{k, l} \, \Lb \theta_{k, l} + \Lb \theta \, \po^i \phi \, \po_i \phi_{k, l} \, \Lb \theta_{k, l} - \Lb \theta \, \po^i \theta \, \po_i \phi_{k, l} \, \Lb \phi_{k, l} + \Lb \phi \, \po^i \theta \, \po_i \theta_{k, l} \, \Lb \phi_{k, l} \big) \Big) \nnb \\
 & \, + f \cos^2 \theta \Big( \big( Q (\phi, \phi) + \frac{1}{2} \Lb \phi \, L \phi \big) (\Lb \theta_{k,l})^2 +  \big( -\frac{1}{2} \Lb \theta \, L \phi - Q (\theta, \phi)  \big) \Lb \phi_{k,l} \, \Lb \theta_{k,l} \nnb \\
 & \quad  + \big(Q (\theta, \theta)  + \frac{1}{2} \Lb \theta \,  L \theta  \big) (\Lb \phi_{k,l} )^2 + \big( -\frac{1}{2} \Lb \phi \, L \theta - Q (\theta, \phi) \big) \Lb \theta_{k,l}\, \Lb \phi_{k,l} \Big) \nnb \\
 &  \, - \frac{1}{4} \cos^2 \theta \big(L \phi  \Lb \theta_{k, l} - L \theta  \Lb \phi_{k, l} \big)^2 \nnb \\
 & \, + \frac{1}{2} f \cos^2 \theta \big( \Lb \phi  \Lb \theta_{k, l} -  \Lb \theta  \Lb \phi_{k, l} \big)  \big( \Lb \phi L \theta_{k, l} -  \Lb \theta  L \phi_{k, l} \big)  + \frac{1}{4} \cos^2 \theta \big(\Lb \phi  L \theta_{k, l} - \Lb \theta  L \phi_{k, l} \big)^2 .\label{P3}
\end{align}

By Combining the expansions discussed, we split the terms in equation \eqref{P3} into three distinct groups: (i) those containing at least one $\slashed{\nabla}$ derivative in the first four lines; (ii) those containing two $\Lb$ derivatives; and (iii) those containing at least one $L$
derivative in the last line. The terms belonging to category (i) are subsumed by the dominant quadratic terms in \eqref{quadratic1}, with the proof following a similar argument as presented in the proof of \eqref{Energy11}. Terms in category (ii) can be further split into two subsets. One subset can be managed as follows:
\begin{align*}
& \, f \cos^2 \theta \Big| \big( Q (\phi, \phi) + \frac{1}{2} \Lb \phi \, L \phi \big) (\Lb \theta_{k,l})^2 +  \big( -\frac{1}{2} \Lb \theta \, L \phi - Q (\theta, \phi)  \big) \Lb \phi_{k,l} \, \Lb \theta_{k,l} \nnb \\
 & \qquad \qquad  + \big(Q (\theta, \theta)  + \frac{1}{2} \Lb \theta \,  L \theta  \big) (\Lb \phi_{k,l} )^2 + \big( -\frac{1}{2} \Lb \phi \, L \theta - Q (\theta, \phi) \big) \Lb \theta_{k,l}\, \Lb \phi_{k,l} \Big|  \nnb \\
 \lesssim & \, \delta M^2 \big( (L\theta)^2 + (L\phi)^2 \big) \big( (\Lb \theta_{k,l})^2 + (\Lb \phi_{k,l})^2 \big),
\end{align*}
and the other becomes
\begin{align*}
& \, \Big|- \frac{1}{4} \cos^2 \theta \big(L \phi  \Lb \theta_{k, l} - L \theta  \Lb \phi_{k, l} \big)^2 \Big| \\
\leq & \, \frac{1}{4}  \Big| (L \phi)^2 (\Lb\theta_{k,l})^2 \, + (L \theta)^2 (\Lb\phi_{k,l})^2 - 2 L \phi L \theta \Lb\theta_{k,l} \phi_{k,l} \Big| \\
 \leq &\,\frac{1}{4} \big( (L \phi)^2 (\Lb\theta_{k,l})^2 + (L \theta)^2 (\Lb\phi_{k,l})^2 + (L \theta)^2 (\Lb\theta_{k,l})^2 + (L \phi)^2 (\Lb\phi_{k,l})^2 \big)\\
 \leq &\,\frac{1}{4} \big( (L\theta)^2 + (L\phi)^2 \big) \big( (\Lb \theta_{k,l})^2 + (\Lb \phi_{k,l})^2 \big).
\end{align*}
Both of these groups can be controlled by the leadingly quadratic terms in \eqref{quadratic1}.

We now turn our attention to the terms classified as type (iii).
The first term of last line in \eqref{P3} can be dealt as
\begin{align}
& \Big|\frac{1}{2} f \cos^2 \theta \, (\Lb \phi \, \Lb \theta_{k,l} - \Lb \theta \, \Lb \phi_{k,l} ) (\Lb \phi \, L \theta_{k,l} - \Lb \theta \, L \phi_{k,l}) \Big| \nnb  \\
 \leq & \, \frac{9}{64} \big(\Lb \phi \, L \theta_{k,l} - \Lb \theta \, L \phi_{k,l} \big)^2 + \frac{4}{9} f^2 \big(\Lb \phi \, \Lb \theta_{k,l} - \Lb \theta \, \Lb \phi_{k,l} \big)^2  \nnb \\
\leq & \, \frac{9}{64} \big(\Lb \phi \, L \theta_{k,l} - \Lb \theta \, L \phi_{k,l} \big)^2 + \frac{8}{9} C \delta^2 M^4 \big( (L\theta)^2 + (L\phi)^2 \big) \, \big( (\Lb \theta_{k,l} )^2 + (\Lb \phi_{k,l})^2 \big),\label{P3-3}
\end{align}
here we have used the decay estimations for $f$ and $\Lb \phi $, $\Lb \theta $ in Lemma \ref{Sobolev} for the last line,
which can be controlled by the last term of \eqref{P3} and the leading quadratic terms in \eqref{quadratic1}. Noting that $C$ in the last line of \eqref{P3-3} is a constant.
\end{proof}

\subsubsection{Energy scheme for $\tilde{\Lb}$}
Now we derive the energy of $\theta_{k,l}$ and $\phi_{k,l} $ corresponding to vector field $\tilde{\Lb} = \Lb + \big((\Lb\theta)^2 + (\Lb\phi)^2 \big) L$.

Define
\be\label{fb}
\fb = (\Lb\theta)^2 + (\Lb\phi)^2,
\ee
and
\begin{align}
  \Pb^{(1)}_1 = & \, \cos^2 \theta \Big(L \phi \, Q(\phi,  \theta_{k,l}) -  Q (\phi, \phi) L \theta_{k,l} \Big) \tilde{\Lb} \theta_{k,l} +  \cos^2 \theta  \Big( Q( \phi, \theta_{k,l})^2 -  Q (\phi, \phi)  Q(\theta_{k,l} ,  \theta_{k,l}) \Big) , \nnb \\
   \Pb^{(1)}_2 = & \, \cos^2 \theta \Big(\Lb \phi \, Q (\phi, \theta_{k,l}) -  Q (\phi, \phi) \Lb \theta_{k,l} \Big) \tilde{\Lb} \theta_{k,l} + \fb \cos^2 \theta  \Big( Q( \phi, \theta_{k,l})^2 -  Q (\phi, \phi)  Q(\theta_{k,l} ,  \theta_{k,l}) \Big), \nnb
\end{align}
\begin{align}
   \Pb^{(2)}_1 = & \,   \cos^2 \theta \Big(Q (\theta, \phi) L \phi_{k,l} - L \theta \, Q(\phi,  \phi_{k,l}) \Big) \tilde{\Lb} \theta_{k,l}  \nnb \\
   & \qquad + 2 \cos^2 \theta  \Big( Q( \theta, \phi)  Q(\theta_{k,l} ,  \phi_{k,l}) - Q(\theta ,  \theta_{k,l}) Q (\phi, \phi_{k,l}) \Big) , \nnb \\
  \Pb^{(2)}_2 = & \,  \cos^2 \theta \Big(Q (\theta, \phi) \Lb \phi_{k,l} - \Lb \theta \, Q(\phi,  \phi_{k,l}) \Big) \tilde{\Lb} \theta_{k,l}  \nnb \\
  & \,  \qquad + 2 f \cos^2 \theta  \Big( Q( \theta, \phi)  Q(\theta_{k,l} ,  \phi_{k,l}) - Q(\theta ,  \theta_{k,l}) Q (\phi, \phi_{k,l}) \Big), \nnb
\end{align}
\begin{align}
  \Pb^{(3)}_1 =  & \, \cos^2 \theta \Big(L \theta \, Q(\theta,  \phi_{k,l}) -  Q (\theta, \theta) L \phi_{k,l} \Big) \tilde{\Lb} \phi_{k,l} +  \cos^2 \theta  \Big( Q( \theta, \phi_{k,l})^2 -  Q (\theta, \theta)  Q(\phi_{k,l} ,  \phi_{k,l}) \Big) , \nnb \\
   \Pb^{(3)}_2 =  & \, \cos^2 \theta \Big(\Lb \theta \, Q (\theta, \phi_{k,l}) -  Q (\theta, \theta) \Lb \phi_{k,l} \Big) \tilde{\Lb} \phi_{k,l} + \fb \cos^2 \theta  \Big( Q( \theta, \phi_{k,l})^2 -  Q (\theta, \theta)  Q(\phi_{k,l} ,  \phi_{k,l}) \Big), \nnb
\end{align}
\begin{align}
  \Pb^{(4)}_1 = & \,  \cos^2 \theta \Big(Q (\theta, \phi) L \theta_{k,l} - L \phi \, Q(\theta,  \theta_{k,l}) \Big) \tilde{\Lb} \phi_{k,l},  \nnb \\
   \Pb^{(4)}_2 = & \,  \cos^2 \theta \Big(Q (\theta, \phi) \Lb \theta_{k,l} - \Lb \phi \, Q(\theta,  \theta_{k,l}) \Big) \tilde{\Lb} \phi_{k,l}.
\end{align}

Similar derivation as above subsection corresponding to vector field $\tilde{L}$ gives
\begin{align}
&\iint_{D_{u, \ub}} \big(\Box \theta_{k,l} - g_1^{\alpha \beta} \p_\alpha  \p_\beta \theta_{k,l}  -h_1^{\alpha \beta} \p_\alpha \p_\beta \phi_{k,l} \big) \tilde{\Lb} \theta_{k,l} \nnb \\
& \qquad \qquad + \big(\cos^2 \theta \, \Box \phi_{k,l} - g_2^{\alpha \beta} \p_\alpha  \p_\beta \phi_{k,l}  -h_2^{\alpha \beta} \p_\alpha \p_\beta \theta_{k,l} \big) \tilde{\Lb} \phi_{k,l} \nnb \\
= & \, E^2_{k,l}(\theta, \phi, \tilde{\Lb}) +  \Eb^{2}_{k,l}(\theta, \phi, \tilde{\Lb}) -  E^{2}_{k,l, t=1}(\theta, \phi, \tilde{\Lb})  \nnb \\
- & \iint_{D_{u, \ub}} K^{\Lb} [\theta_{k,l}]+ K^{\fb L} [\theta_{k,l}]+ K^{\cos^2 \theta \Lb} [\phi_{k,l}] + K^{\fb \cos^2 \theta L} [\phi_{k,l}] - \Qb^{(1)} - \Qb^{(2)} - \Qb^{(3)} - \Qb^{(4)},\label{energy2}
\end{align}
where
\begin{align}
 E^2_{k,l}(\theta, \phi, \tilde{\Lb})
 = & \int_{C_{u}} |\slashed{\nabla} \theta_{k,l} |^2 + \fb \,|L \theta_{k,l}|^2+ \cos^2 \theta \big( | \slashed{\nabla} \phi_{k,l} |^2 + \fb \,|L \phi_{k,l}|^2 \big)  \nnb \\
 & \qquad - \big( \Pb^{(1)}_1 +  \Pb^{(2)}_1 + \Pb^{(3)}_1 +  \Pb^{(4)}_1 \big),\label{Energy3}
\end{align}
\begin{align}
  \Eb^{2}_{k,l}(\theta, \phi, \tilde{\Lb})
   = & \int_{\Cb_{\ub}} |\Lb \theta_{k,l}|^2 +  \fb \,| \slashed{\nabla} \theta_{k,l} |^2 + \cos^2 \theta \big( | \Lb \phi_{k,l}|^2 + \fb | \slashed{\nabla} \phi_{k,l} |^2 \big)  \nnb \\
   & \qquad -  \big( \Pb^{(1)}_2 + \Pb^{(2)}_2 + \Pb^{(3)}_2 +  \Pb^{(4)}_2 \big),\label{Energy4}
\end{align}
\begin{align*}
  E^{2}_{k,l, t=1}(\theta, \phi, \tilde{\Lb}) = & \frac{1}{2} \int_{\Sigma_1} \Big(  | \slashed{\nabla} \theta_{k,l}|^2 + |\Lb \theta_{k,l} |^2 +
  \fb \, \big( |L \theta_{k,l} |^2 + | \slashed{\nabla} \theta_{k,l}|^2 \big) \\
   & \qquad + \cos^2 \theta \big( | \slashed{\nabla} \phi_{k,l}|^2 + |\Lb \phi_{k,l} |^2 + \fb \, (|L \phi_{k,l} |^2 + | \slashed{\nabla} \phi_{k,l}|^2) \big) \Big) \\
   & \qquad -\big( \Pb^{(1)}_1 +  \Pb^{(2)}_1 + \Pb^{(3)}_1 +  \Pb^{(4)}_1 + \Pb^{(1)}_2 + \Pb^{(2)}_2 + \Pb^{(3)}_2 +  \Pb^{(4)}_2 \big),
\end{align*}
\be\label{K1}
 K^{\Lb} [\theta_{k,l}] = -\frac{1}{r} L \theta_{k,l} \Lb \theta_{k,l},
\ee
\be\label{K2}
K^{\fb L} [\theta_{k,l}] = \frac{1}{r} \fb \, L \theta_{k,l}\, \Lb \theta_{k,l} + Q( \fb, \theta_{k,l})  L \theta_{k,l} -\frac{1}{2}  L \fb \,
 |\nabla \theta_{k,l} |^2,
\ee
\begin{align}
K^{\cos^2 \theta \Lb} [\phi_{k,l}] = & \, - \frac{1}{r} \cos^2 \theta L \phi_{k,l}\, \Lb \phi_{k,l} + Q ( \cos^2 \theta, \phi_{k,l}) \, \Lb \phi_{k,l} - \frac{1}{2}  \Lb (\cos^2 \theta) \, |\nabla \phi_{k,l} |^2 ,\label{K3}
\end{align}
\begin{align}
 K^{\fb \cos^2 \theta L} [\phi_{k,l}] = &\, \frac{1}{r} (\fb \cos^2 \theta) L \phi_{k,l}\, \Lb \phi_{k,l} + Q( \fb  \cos^2 \theta , \phi_{k,l})  L \phi_{k,l} -\frac{1}{2}  L (\fb  \cos^2 \theta)
 |\nabla \phi_{k,l} |^2,\label{K4}
\end{align}
\begin{align}\label{QB1}
\Qb^{(1)} = & \frac{1}{2} \Lb g_1^{\alpha\beta} \, \p_\alpha \theta_{k,l} \, \p_\beta \theta_{k,l}  + \frac{1}{2} L \big( \fb \, g_1^{\alpha\beta} \big) \p_\alpha \theta_{k,l} \, \p_\beta \theta_{k,l} - \p_\alpha g_1^{\alpha\beta} \, \p_\beta \theta_{k,l} \,\tilde{\Lb} \theta_{k,l} -  g_1^{\alpha\beta} \p_\alpha \fb \, \p_\beta \theta_{k,l} \, L \theta_{k,l} \nnb \\
   & + \frac{1}{r} (1-\fb) g_1^{i \beta} \, \po_i \theta_{k,l} \,\p_\beta \theta_{k,l} ,
\end{align}
\begin{align}\label{QB2}
\Qb^{(2)}  = &  \Lb h_1^{\alpha \beta} \, \p_\alpha \theta_{k,l} \, \p_\beta \phi_{k,l} +  L \big(\fb \, h_1^{\alpha \beta} \, \big) \p_\alpha \theta_{k,l} \, \p_\beta \phi_{k,l} - \p_\alpha h_1^{\alpha \beta} \, \p_\beta \phi_{k,l} \,\tilde{\Lb} \theta_{k,l} - h_1^{\alpha \beta} \p_\alpha \fb \, \p_\beta \phi_{k,l} \, L \theta_{k,l} \nnb \\
  & \, + \frac{1}{r} (1-\fb) h_1^{i \beta} \, \po_i \theta_{k,l} \,\p_\beta \phi_{k,l} ,
\end{align}
\begin{align}\label{QB3}
\Qb^{(3)} = & \frac{1}{2} \Lb g_2^{\alpha\beta} \, \p_\alpha \phi_{k,l} \, \p_\beta \phi_{k,l}  + \frac{1}{2} L \big( \fb \, g_2^{\alpha\beta} \big) \p_\alpha \phi_{k,l} \, \p_\beta \phi_{k,l} - \p_\alpha g_2^{\alpha\beta} \, \p_\beta \phi_{k,l} \,\tilde{\Lb} \phi_{k,l} -  g_2^{\alpha\beta} \p_\alpha \fb \, \p_\beta \phi_{k,l} \, L \phi_{k,l} \nnb  \\
   & + \frac{1}{r} (1-\fb) g_2^{i \beta} \, \po_i \phi_{k,l} \,\p_\beta \phi_{k,l} ,
\end{align}
\begin{align}\label{QB4}
 \Qb^{(4)} =  - \p_\alpha h_2^{\alpha \beta} \, \p_\beta \theta_{k,l} \,\tilde{\Lb} \phi_{k,l} - h_2^{\alpha \beta} \p_\alpha \fb \, \p_\beta \theta_{k,l} \, L \phi_{k,l} + \frac{1}{r} (1-\fb) h_2^{i \beta} \, \po_i \phi_{k,l} \,\p_\beta \theta_{k,l} .
\end{align}

Similar to Lemma \ref{lem-en1}, we can conclude that
\begin{lemma}\label{lem-en2}
With the bootstrap assumption \eqref{assump}, we have
\begin{align}
 E^2_{k,l}(\theta, \phi, \tilde{\Lb}) \sim  \int_{C_{u}} |\slashed{\nabla} \theta_{k,l}|^2 + |\slashed{\nabla} \phi_{k,l}|^2 & \, + \big( (\Lb\theta)^2 + (\Lb\phi)^2 \big) \big( (L \theta_{k,l})^2 + (L \phi_{k,l})^2 \big)  \nnb \\
 & + (L \phi \, \Lb \theta_{k,l} - L \theta \, \Lb \phi_{k,l} )^2, \label{Energy21}
\end{align}
and
\begin{align}
 \Eb^{2}_{k,l}(\theta, \phi, \tilde{\Lb}) \sim \int_{\Cb_{\ub}} ( \Lb \theta_{k, l} )^2 + ( \Lb \phi_{k, l} )^2& + \big( (\Lb\theta)^2 + (\Lb\phi)^2 \big) \Big( | \slashed{\nabla} \theta_{k, l} |^2 + | \slashed{\nabla} \phi_{k, l} |^2  \nnb \\
 & + \big( \Lb \phi L \theta_{k, l} - \Lb \theta L \phi_{k, l}\big)^2 \Big). \label{Energy22}
\end{align}
\end{lemma}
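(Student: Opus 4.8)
The plan is to run, essentially verbatim, the two computations carried out in the proof of Lemma~\ref{lem-en1}, exploiting the fact that the whole energy construction of Subsection~\ref{3.3} is formally invariant under the substitution $L\leftrightarrow\Lb$, $f\leftrightarrow\fb$, $C_u\leftrightarrow\Cb_{\ub}$. Since the flat null form admits the symmetric decomposition $Q(\psi,\chi)=-\tfrac12\big(L\psi\,\Lb\chi+\Lb\psi\,L\chi\big)+\po^{\,i}\psi\,\po_i\chi$, a direct inspection of \eqref{PP1}--\eqref{PP4} and of the currents defined just before \eqref{energy2} shows that $\Pb^{(j)}_1$ is the image of $P^{(j)}_2$, and $\Pb^{(j)}_2$ the image of $P^{(j)}_1$, under this substitution; consequently the target expressions \eqref{Energy22} and \eqref{Energy21} are precisely the images of \eqref{Energy11} and \eqref{Energy12}. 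It therefore suffices to reproduce the two arguments of Lemma~\ref{lem-en1}, with the roles of $L$ and $\Lb$, and of $f$ and $\fb$, interchanged.

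Concretely, I would first expand $-\big(\Pb^{(1)}_1+\Pb^{(2)}_1+\Pb^{(3)}_1+\Pb^{(4)}_1\big)$ and $-\big(\Pb^{(1)}_2+\Pb^{(2)}_2+\Pb^{(3)}_2+\Pb^{(4)}_2\big)$ by inserting the decomposition of $Q$ above together with $\tilde{\Lb}=\Lb+\fb L$ and $\fb=(\Lb\theta)^2+(\Lb\phi)^2$, exactly as the right-hand sides of \eqref{P4} and \eqref{P3} were produced. This isolates, on $C_u$, the perfect square $\tfrac14\cos^2\theta\,(L\phi\,\Lb\theta_{k,l}-L\theta\,\Lb\phi_{k,l})^2$ (the last term of \eqref{Energy21}) and, on $\Cb_{\ub}$, the perfect square $\tfrac14\fb\cos^2\theta\,(\Lb\phi\,L\theta_{k,l}-\Lb\theta\,L\phi_{k,l})^2$ (the last term inside \eqref{Energy22}). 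I would then sort the remaining terms into three groups, just as before: (i) those carrying at least one good derivative $\po$ of $\theta_{k,l}$ or $\phi_{k,l}$; (ii) those quadratic in the direction appearing in the leading form of \eqref{Energy3}--\eqref{Energy4} and hence multiplied by $\fb$; and (iii) a residual family of cross terms. Groups (i) and (ii) are absorbed into the leading quadratic blocks of \eqref{Energy21}--\eqref{Energy22} using the $L^\infty$ bounds of Lemmas~\ref{Sobolev} and~\ref{Sob-L}, while group (iii) is treated by the same Cauchy inequality (with the weights $\tfrac{9}{64}$ and $\tfrac{8}{9}$) employed around \eqref{P3-3}: a fixed small fraction of each cross term is sent into the relevant perfect square and the remainder into the quadratic block, the numerical constants being chosen precisely so that the surviving coefficient of each perfect square stays strictly positive.

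I expect the only genuinely non-formal point --- and hence the main obstacle --- to be the absorption step (iii). In the $\tilde{L}$ computation many error terms are controlled simply because $f=(L\theta)^2+(L\phi)^2\lesssim\delta^2|\ub|^{-3}M^2$ is \emph{small}, whereas here its counterpart $\fb=(\Lb\theta)^2+(\Lb\phi)^2\lesssim|\ub|^{-2}M^2$ is merely \emph{bounded}. The resolution is that, after the substitution, the required smallness migrates to a different factor: every $\fb$-carrying error term still contains a genuinely small cofactor --- a good derivative $\po(\theta,\phi)$ or $\po(\theta_{k,l},\phi_{k,l})$, an $L$-derivative $L(\theta,\phi)$ or $L(\theta_{k,l},\phi_{k,l})$, or a null form $Q$ of two of the fields --- each of order $\delta^{3/4}$ or $\delta$ (times powers of $M$ and $|\ub|^{-1}$) by Lemmas~\ref{Sobolev} and~\ref{Sob-L}; moreover the mixed product $L\psi\,\Lb\chi$ is symmetric in $L$ and $\Lb$, so the numerical bounds appearing in those estimates are literally unchanged by the substitution. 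Since $|\ub|\ge1-\delta$ in region~II and $\delta^{\frac12}M<1$ under the bootstrap assumption~\eqref{assump}, every relevant nonlinear product ($\fb$, or $\fb^2$, times such cofactors) is $\ll1$ relative to the corresponding term of the target quadratic form, which is exactly what is needed. Once this bookkeeping is verified, the two estimates close word for word as in Lemma~\ref{lem-en1}, establishing \eqref{Energy21} and \eqref{Energy22}.
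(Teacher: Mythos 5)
Your proposal is correct and follows the same route the paper intends: the paper gives no written proof of Lemma \ref{lem-en2}, simply asserting it is ``similar to Lemma \ref{lem-en1},'' i.e.\ the mirror argument under the substitution $L\leftrightarrow\Lb$, $f\leftrightarrow\fb$, $C_u\leftrightarrow\Cb_{\ub}$ that you carry out. Your explicit check that the absorption step survives the loss of smallness of $\fb$ --- because the Cauchy--Schwarz step only uses $(\Lb\theta)^2,(\Lb\phi)^2\le\fb$ and the remaining smallness sits in symmetric cofactors such as $|\po\phi|$, $|L\phi\,\Lb\phi|$ and $Q(\phi,\phi)$ --- is exactly the point the paper leaves implicit, and it is handled correctly.
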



\subsection{Energy estimates}

There exists a hierarchical structure for the energy estimates, the computation of $E_{\leq N}$ necessitates prior knowledge of the estimates for $\Eb_{\leq N}$. Consequently, the estimation for $\Eb_{\leq N}$ should be conducted before proceeding with $E_{\leq N}$.

For simplicity, we use $\Vert \cdot \Vert_{L^2(C_{u})} $ to denote $\Vert \cdot \Vert_{L^2 (C_{u}^{\ub})}$ and $\Vert \cdot \Vert_{L^2(\Cb_{\ub})} $ to denote $\Vert \cdot \Vert_{L^2(\Cb_{\ub}^u)}$ in the following estimates.

\subsubsection{Estimates for $\Eb_{\leq N}$}

Recall the energy identity \eqref{energy2}. By Lemma \ref{lem-en2}, \eqref{Eb} and \eqref{sum}, $\delta \Eb_{\leq N}^2  ( u, \ub)$ can be bounded by the summation of the energy terms of the above identity for $0 \leq l \leq k \leq N$, then we need to deal with the following energy inequality
\begin{align} \label{First-E}
\delta \Eb^2_{\leq N} ( u, \ub) \lesssim & \, \delta I^2_N(\theta_0, \theta_1, \phi_0, \phi_1) +  \sum_{0\leq l \leq k \leq N} \iint_{D_{u, \ub}} \Big( \Big| \Qb^{(1)} + \Qb^{(2)} + \Qb^{(3)} + \Qb^{(4)} \Big|  \nnb \\
& + \Big|  K^{\Lb} [\theta_{k,l}]+ K^{\fb L} [\theta_{k,l}] + K^{\cos^2 \theta \Lb} [\phi_{k,l}] + K^{\fb \cos^2 \theta L} [\phi_{k,l}]  \Big|  \nnb\\
&   + \Big| \big(\Box \theta_{k,l} - g_1^{\alpha \beta} \p_\alpha  \p_\beta \theta_{k,l}  -h_1^{\alpha \beta} \p_\alpha \p_\beta \phi_{k,l} \big) \tilde{\Lb} \theta_{k,l} \Big| \nnb  \\
& + \Big| \big(\cos^2 \theta \, \Box \phi_{k,l} - g_2^{\alpha \beta} \p_\alpha  \p_\beta \phi_{k,l}  -h_2^{\alpha \beta} \p_\alpha \p_\beta \theta_{k,l} \big) \tilde{\Lb} \phi_{k,l} \Big|\Big),
\end{align}
where $\Qb^{(1)}$, $\Qb^{(2)}$, $\Qb^{(3)}$, $\Qb^{(4)}$ are defined in \eqref{QB1} - \eqref{QB4} and $K^{\Lb} [\theta_{k,l}]$, $K^{\fb L} [\theta_{k,l}]$, $K^{\cos^2 \theta \Lb} [\phi_{k,l}]$, $K^{\fb \cos^2 \theta L} [\phi_{k,l}]$ are defined in \eqref{K1} - \eqref{K4}.

We begin by addressing the terms associated with $\Qb$. In order to obtain
the decay estimates for the derivatives of the functions
$g_1, g_2, h_1$ and $h_2$, we introduce the following lemma which is originated from \cite{Lindblad}.
\begin{lemma}\label{symmetric}
  If $k^{\alpha \beta}$ is a symmetric tensor, then
  \begin{align*}
    k^{\alpha \beta} \p_\alpha \p_\beta =  & \, k_{\Lb \, \Lb} L^2 + 2 k_{L \Lb} L \Lb +  k_{L L } \Lb^2 + k^{ij} \po_i \po_j + k_{\Lb}^{\,\,\, i}  L  \po_i + k_{L}^{\,\,\, i}  \Lb \po_i \\
     & \, + \frac{1}{2r} \overline{tr} k (L - \Lb) + \frac{1}{r} k^{ij} \omega_j \po_i,
  \end{align*}
  where
  \begin{align*}
   & k_{\Lb \, \Lb} = \frac{1}{4} k^{00} + \frac{1}{2} k^{0i}  \omega_i + \frac{1}{4} k^{ij}  \omega_i  \omega_j,  \quad k_{L \Lb} = \frac{1}{4} ( k^{00} -  k^{ij}  \omega_i  \omega_j),  \\
   & k_{L L } = \frac{1}{4} k^{00} - \frac{1}{2} k^{0i}  \omega_i + \frac{1}{4} k^{ij}  \omega_i  \omega_j, \quad k_{\Lb}^{\,\,\, i} = k^{0i} + k^{ij} \omega_j,  \\
   & k_{L}^{\,\,\, i} = k^{0i} - k^{ij} \omega_j, \quad \overline{tr} k = \deo_{ij} k^{ij}.
  \end{align*}
\end{lemma}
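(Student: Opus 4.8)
The plan is to prove Lemma \ref{symmetric} by a direct change of frame. I would express the coordinate fields $\p_0$ and $\p_i$ in the null frame $\{L,\Lb,\po_i\}$, substitute into the decomposition $k^{\alpha\beta}\p_\alpha\p_\beta = k^{00}\p_0\p_0 + 2k^{0i}\p_0\p_i + k^{ij}\p_i\p_j$ (using the symmetry of $k$ and the commutativity of coordinate derivatives), expand each of the three pieces as a differential operator, and finally collect the coefficients of $L^2$, $L\Lb$, $\Lb^2$, $\po_i\po_j$, $L\po_i$, $\Lb\po_i$ and of the first-order operators $L$, $\Lb$, $\po_i$.

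First I would record the elementary identities $\p_0 = \frac12(L+\Lb)$ and $\p_i = \po_i + \omega_i\p_r = \po_i + \frac12\omega_i(L-\Lb)$, together with $[L,\Lb] = 0$, $L\omega_i = \Lb\omega_i = 0$, $\omega^i\omega_i = 1$, $\omega^i\po_i = 0$, $\po_i\omega^j = \frac1r(\delta_i^{\,j} - \omega_i\omega^j) = \frac1r\deo_i^{\,j}$, and the commutation relations $[\po_i, L] = \frac1r\po_i$, $[\po_i,\Lb] = -\frac1r\po_i$ (which follow from the relations for $\slashed{\nabla}$ already used in the proof of Lemma \ref{decay}, via $\po_i = \frac{\omega^j}{r}\Omega_{ji}$). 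With these in hand, $k^{00}\p_0\p_0 = \frac14 k^{00}(L+\Lb)^2 = \frac14 k^{00}(L^2 + 2L\Lb + \Lb^2)$ requires no commutation; for $2k^{0i}\p_0\p_i$ I would use $(L+\Lb)\po_i = L\po_i + \Lb\po_i$ and $(L+\Lb)\big(\omega_i(L-\Lb)\big) = \omega_i(L^2 - \Lb^2)$ to obtain $2k^{0i}\p_0\p_i = k^{0i}(L\po_i + \Lb\po_i) + \frac12 k^{0i}\omega_i(L^2 - \Lb^2)$, with no first-order remainder; and for $k^{ij}\p_i\p_j$ I would expand $\p_i\p_j$ into $\po_i\po_j$, the cross terms $\frac12\omega_i(L-\Lb)\po_j$ and $\frac12\po_i\big(\omega_j(L-\Lb)\big)$, and $\frac14\omega_i\omega_j(L-\Lb)^2$. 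In the second cross term $\po_i$ differentiates the coefficient $\omega_j$, producing $\frac{1}{2r}\deo_{ij}(L-\Lb)$, and commuting $\po_i$ past $L-\Lb$ produces an extra $\frac1r\omega_j\po_i$; using the symmetry of $k^{ij}$ (which annihilates the antisymmetric $[\po_i,\po_j]$ and lets me symmetrize the two cross terms in $i,j$) these first-order pieces collapse to exactly $\frac{1}{2r}\overline{tr}k\,(L-\Lb) + \frac1r k^{ij}\omega_j\po_i$.

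Finally I would assemble the coefficients: the $L^2$ and $\Lb^2$ coefficients come out to $\frac14 k^{00} \pm \frac12 k^{0i}\omega_i + \frac14 k^{ij}\omega_i\omega_j$, which are $k_{\Lb\,\Lb}$ and $k_{LL}$, while the $L\Lb$ coefficient is $\frac12(k^{00} - k^{ij}\omega_i\omega_j) = 2k_{L\Lb}$; the coefficients of $L\po_i$ and $\Lb\po_i$ are $k^{0i} \pm k^{ij}\omega_j$, which are $k_{\Lb}^{\,\,i}$ and $k_{L}^{\,\,i}$; and the $\po_i\po_j$ term, which arises only from $k^{ij}\p_i\p_j$, is $k^{ij}\po_i\po_j$. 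This yields precisely the stated identity. The computation is entirely mechanical; the one point that needs care is the bookkeeping of the $1/r$ first-order terms, which enter both when $\po_i$ falls on the coefficient $\omega_j$ and when one commutes $\po_i$ past $L$ or $\Lb$ to bring the mixed second-order terms into the canonical order $L\po_i$, $\Lb\po_i$ — one must check that they telescope exactly to the two terms on the last line, leaving no residual $1/r$ contribution.
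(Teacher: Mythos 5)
Your proposal is correct, and it is worth noting that the paper itself supplies no proof of this lemma — it is simply imported from \cite{Lindblad} — so your direct frame decomposition is exactly the standard verification one would write out. I checked the bookkeeping: with $\p_0=\tfrac12(L+\Lb)$, $\p_i=\po_i+\tfrac12\omega_i(L-\Lb)$, $L\omega_i=\Lb\omega_i=0$, $\po_i\omega_j=\tfrac1r\deo_{ij}$ and $[\po_i,L-\Lb]=\tfrac2r\po_i$, the three blocks produce precisely the stated second-order coefficients ($k_{\Lb\Lb}$, $2k_{L\Lb}$, $k_{LL}$, $k_{\Lb}^{\,\,i}$, $k_{L}^{\,\,i}$, $k^{ij}$), and the only first-order remainders are $\tfrac{1}{2r}\overline{tr}k\,(L-\Lb)$ from $\po_i$ hitting $\omega_j$ and $\tfrac1r k^{ij}\omega_j\po_i$ from the commutator, as you claim.
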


The same deduction with Lemma \ref{symmetric} yields the following lemma.
\begin{lemma} \label{pp}
  For any smooth functions $\xi$ and $\chi $,
  \begin{align}\label{p1}
  \p^\alpha \xi \, \p_\alpha \p^\beta \chi  \, \p_\beta = & \, \frac{1}{4} \big( \Lb \xi  \, L\Lb \chi -2 \po^i \xi \, \Lb \po_i \chi + L \xi  \, \Lb^2 \chi \Big) L + \frac{1}{4} \big( L \xi  \, L\Lb \chi -2 \po^i \xi \, L \po_i \chi + \Lb \xi  \, L^2 \chi \Big) \Lb \nnb \\
  & \, + \Big( \po^j \xi \, \po_j \po^i \chi - \frac{1}{2} L \xi \, \Lb \po^i \chi - \frac{1}{2} \Lb \xi \, L \po^i \chi \Big) \, \po_i + \frac{1}{2r} \po^i \xi (L-\Lb)\chi \, \po_i.
  \end{align}
\end{lemma}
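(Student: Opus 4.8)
The plan is to reproduce, step by step, the frame computation behind Lemma~\ref{symmetric}: expand every Cartesian derivative in the null frame $\{L,\Lb,\po_i\}$ by means of $\p_0=\tfrac12(L+\Lb)$ and $\p_i=\po_i+\tfrac12\omega_i(L-\Lb)$, then collect. Raising one index with the Minkowski metric this gives the operator identity $W:=\p^\alpha\xi\,\p_\alpha=-\tfrac12(L\xi)\Lb-\tfrac12(\Lb\xi)L+\po^j\xi\,\po_j$, together with $\p^0\chi=-\tfrac12(L+\Lb)\chi$ and $\p^i\chi=\po_i\chi+\tfrac12\omega_i(L-\Lb)\chi$, so that the object to analyse is $\p^\alpha\xi\,\p_\alpha\p^\beta\chi\,\p_\beta=\big(W(\p^\beta\chi)\big)\p_\beta$. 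Throughout I would use $\omega^j\po_j=0$, $\omega^j\po_j\xi=0$, $\sum_i\omega_i^2=1$, $L\omega_i=\Lb\omega_i=0$ and $\po_j\omega_i=\deo_{ij}/r$ (whence $W\omega_i=\tfrac1r\po_i\xi$), the fact that $\po^i=\deo^{ij}\po_j=\po_i$ as operators (so the position of the index on $\po\chi$ is immaterial), and the commutators $[\po_j,L]=\tfrac1r\po_j$ and $[\po_j,\Lb]=-\tfrac1r\po_j$, which follow directly from \eqref{po} and are the component form of the relations $[\slashed{\nabla},L]=\tfrac1r\slashed{\nabla}$, $[\slashed{\nabla},\Lb]=-\tfrac1r\slashed{\nabla}$ used in the proof of Lemma~\ref{decay}.

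First I would substitute the frame form of $\p^\beta\chi$ into $W(\p^\beta\chi)$ and apply $W$ term by term: $W(L\chi)$, $W(\Lb\chi)$ and $W(\po_i\chi)$ are expanded using the formula for $W$ above, with the commutators employed to move $L$ and $\Lb$ to the left of $\po_j$, so that only the second null-frame derivatives $L\Lb\chi,\Lb^2\chi,L^2\chi,\po_j\po_i\chi,L\po_i\chi,\Lb\po_i\chi$ appear; differentiating the explicit $\omega_i$ inside $\p^i\chi$ generates the one inhomogeneous contribution $\tfrac12(L-\Lb)\chi\cdot W\omega_i=\tfrac1{2r}(L-\Lb)\chi\,\po_i\xi$. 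Next I would contract with $\p_\beta$ (i.e.\ $\p_0=\tfrac12(L+\Lb)$, $\p_i=\po_i+\tfrac12\omega_i(L-\Lb)$) and sort the outcome into multiples of $L$, $\Lb$ and $\po_i$, discarding the many spurious radial pieces by $\omega^j\po_j=0$, $\omega^j\po_j\xi=0$, and $\sum_i\omega_iW(\po_i\chi)=-\sum_i(W\omega_i)\po_i\chi=-\tfrac1r\po^i\xi\po_i\chi$. A short check then shows that all $\tfrac1r$ contributions cancel in the coefficients of $L$ and $\Lb$, which reduce respectively to $\tfrac14(\Lb\xi\,L\Lb\chi-2\po^i\xi\,\Lb\po_i\chi+L\xi\,\Lb^2\chi)$ and $\tfrac14(L\xi\,L\Lb\chi-2\po^i\xi\,L\po_i\chi+\Lb\xi\,L^2\chi)$, while the coefficient of $\po_i$ collects $\po^j\xi\,\po_j\po^i\chi-\tfrac12 L\xi\,\Lb\po^i\chi-\tfrac12\Lb\xi\,L\po^i\chi$ from $W(\po_i\chi)\po_i$ together with the surviving $\tfrac1{2r}\po^i\xi(L-\Lb)\chi$; this is exactly \eqref{p1}.

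The only genuinely delicate point is the bookkeeping of the $\tfrac1r$-order terms, which enter from three distinct sources — the commutators $[\po_j,L]$ and $[\po_j,\Lb]$, the derivative $W\omega_i$, and the radial projection carried out in the contraction step — and the identity \eqref{p1} relies on checking that these collapse to the single term $\tfrac1{2r}\po^i\xi(L-\Lb)\chi\,\po_i$. One must also be careful that the spatial index on $\po_i$ is raised and lowered with $\deo_{ij}=\delta_{ij}-\omega_i\omega_j$ rather than $\delta_{ij}$; this is what underlies $\po^i=\po_i$ as operators and the freedom to interchange $\po^i\chi$ with $\po_i\chi$ in the final formula.
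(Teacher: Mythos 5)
Your proposal is correct and is precisely the computation the paper has in mind: it states only that Lemma \ref{pp} follows by ``the same deduction'' as Lemma \ref{symmetric}, i.e.\ the frame decomposition $\p_t=\tfrac12(L+\Lb)$, $\p_i=\po_i+\tfrac12\omega_i(L-\Lb)$ followed by collection of terms, which is exactly what you carry out. Your bookkeeping of the $\tfrac1r$ contributions (from $[\po_j,L^{\pm}]$, from $W\omega_i=\tfrac1r\po_i\xi$, and from the radial part of the contraction) is the right delicate point, and the cancellations you describe do produce exactly the single residual term $\tfrac1{2r}\po^i\xi\,(L-\Lb)\chi\,\po_i$ in \eqref{p1}.
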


We now turn to the estimate of $\Qb$. Recall that the definition of $\Qb^{(1)}$ in \eqref{QB1}.
According to \eqref{metric}, the first term in \eqref{QB1} becomes
\begin{align}
 & \, \frac{1}{2} \Lb g_1^{\alpha\beta} \, \p_\alpha \theta_{k,l} \, \p_\beta \theta_{k,l}\nnb \\
 = & \, \frac{1}{2} \Lb \Big( \cos^2 \theta \big(\p^\alpha \phi \, \p^\beta \phi - g^{\alpha \beta} Q(\phi, \phi)\big) \Big)\p_\alpha \theta_{k,l} \, \p_\beta \theta_{k,l} \nnb \\
 = & \, -  \sin \theta \cos \theta \Lb \theta \big(\p^\alpha \phi \, \p^\beta \phi - g^{\alpha \beta} Q(\phi, \phi)\big) \p_\alpha \theta_{k,l} \, \p_\beta \theta_{k,l} \nnb \\
  & \,  + \cos^2 \theta \, \Big( \Lb \p^\alpha \phi \, \p^\beta \phi -  \frac{1}{2} g^{\alpha \beta} \Lb Q(\phi, \phi) \Big) \p_\alpha \theta_{k,l} \, \p_\beta \theta_{k,l} \nnb \\
 = & \, -  \sin \theta \cos \theta \Lb \theta \Big( \big( Q( \phi, \theta_{k,l} ) \big)^2 -  Q(\phi, \phi) Q( \theta_{k,l}, \theta_{k,l} ) \Big) \nnb \\
 & \,  + \cos^2 \theta \, \Big( \big(- \frac{1}{2}  \Lb^2 \phi \, L \theta_{k,l} - \frac{1}{2} \Lb L \phi \, \Lb \theta_{k,l} + \Lb \po^i \phi \, \po_i \theta_{k,l} \big)  Q( \phi, \theta_{k,l} ) -  \frac{1}{2} \Lb Q(\phi, \phi)  Q( \theta_{k,l}, \theta_{k,l} ) \Big). \label{Qb11}
\end{align}
The worst decay terms in $\big( Q( \phi, \theta_{k,l} ) \big)^2$ and $ Q(\phi, \phi) Q( \theta_{k,l}, \theta_{k,l} ) $ are $\Lb \phi \,  L \phi \, L \theta_{k,l} \, \Lb \theta_{k,l}$ and $\frac{1}{4}  (\Lb \phi \, L \theta_{k,l})^2 $. According to Lemma \ref{Sobolev}, we can control them by
\begin{align}
 & \iint_{D_{u, \ub}} \Big| \big( Q( \phi, \theta_{k,l} ) \big)^2 -  Q(\phi, \phi) Q( \theta_{k,l}, \theta_{k,l} )  \Big| \nnb \\
\lesssim & \iint_{D_{u, \ub}} \frac{1}{4}  (\Lb \phi \, L \theta_{k,l})^2 + \Big| \Lb \phi \,  L \phi \, L \theta_{k,l} \, \Lb \theta_{k,l} \Big| \nnb \\
\lesssim & \, \iint_{D_{u, \ub}} |\ub|^{-2} M^2 (L \theta_{k,l})^2 + \delta |\ub|^{-\frac{5}{2}} M^2 |L \theta_{k,l} \, \Lb \theta_{k,l}| \nnb \\
\lesssim & \, M^2 \int_0^{u} \Vert L \theta_{k,l} \Vert^2_{L^2(C_{u'})} \di u' +  \delta M^2 \Big( \int_0^{u} \Vert L \theta_{k,l} \Vert^2_{L^2(C_{u'})} \di u'\Big)^{\frac{1}{2}} \Big( \int_{\ub_0}^{\ub} |\ub'|^{-5}\Vert \Lb \theta_{k,l} \Vert^2_{L^2(\Cb_{\ub'})} \di \ub'\Big)^{\frac{1}{2}} \nnb \\
\lesssim & \, M^2 \, (\delta M)^2 \, \delta
+ \delta M^2 \, \big( (\delta M)^2 \, \delta \big)^{\frac{1}{2}} \Big( \delta M^2 \, \big| \, |\ub|^{-4} - |\ub_0|^{-4} \big| \Big)^{\frac{1}{2}}
\lesssim \delta^3 M^4, \label{Qb111}
\end{align}
here we have used \eqref{E} -- \eqref{sum} and the bootstrap assumption \eqref{assump}.
Noting that $\sin \theta \cos \theta \Lb \theta$ provides an extra decay $\delta |\ub|^{-2} M^2$, the first term in \eqref{Qb11} can be controlled by $\delta^4 M^6$.
Similarly, according to Lemma \ref{decay}, in the second term of \eqref{Qb11}, we estimate the worst decay terms as
\begin{align}
&  \, \iint_{D_{u, \ub}} \Big| \Lb^2 \phi \,\Lb \phi  \, (L \theta_{k,l} )^2 \Big| + \Big|  L \phi \Lb^2 \phi \, L \theta_{k,l} \Lb \theta_{k,l} \Big|
 \nnb \\
 \lesssim & \, \iint_{D_{u, \ub}} \delta^{-1} |\ub|^{-2} M^2 \, (L \theta_{k,l} )^2 + |\ub|^{-\frac{5}{2} } M^2 \,\big| L \theta_{k,l} \Lb \theta_{k,l} \big| \nnb \\
 \lesssim & \, \delta^{-1}  M^2 \int_0^{u} \Vert L \theta_{k,l} \Vert^2_{L^2(C_{u'})} \di u'
  +  M^2 \Big( \int_0^{u} \Vert L \theta_{k,l} \Vert^2_{L^2(C_{u'})} \di u'\Big)^{\frac{1}{2}} \Big( \int_{\ub_0}^{\ub} |\ub'|^{-5}\Vert \Lb \theta_{k,l} \Vert^2_{L^2(\Cb_{\ub'})} \di \ub'\Big)^{\frac{1}{2}} \nnb\\
 \lesssim & \, \delta^{-1}  M^2 \, \delta^2  M^2 \, \delta +  M^2 ( \delta^2  M^2 \, \delta)^{\frac{1}{2}}  ( \delta M^2 )^{\frac{1}{2}}
 \lesssim  \delta^2 M^4.
\end{align}

 Thus, the term \eqref{Qb11} can be controlled by
\be\label{Qb11'}
 \iint_{D_{u, \ub}} \Big| \frac{1}{2} \Lb g_1^{\alpha\beta} \, \p_\alpha \theta_{k,l} \, \p_\beta \theta_{k,l} \Big| \lesssim \delta^2 M^4.
\ee

The second term in \eqref{QB1} can be expanded as
\begin{align}\label{Qb12}
    & \, \frac{1}{2} L \big( \fb \, g_1^{\alpha\beta} \big) \p_\alpha \theta_{k,l} \, \p_\beta \theta_{k,l} \nnb \\
  = & \,  \frac{1}{2} L  \fb \, g_1^{\alpha\beta} \, \p_\alpha \theta_{k,l} \, \p_\beta \theta_{k,l} + \frac{1}{2} \fb  L  g_1^{\alpha\beta} \p_\alpha \theta_{k,l} \, \p_\beta \theta_{k,l}  \nnb \\
  = & \,  \frac{1}{2} \cos^2 \theta \, L  \fb \, \Big( \big( Q( \phi, \theta_{k,l} ) \big)^2 -  Q(\phi, \phi) Q( \theta_{k,l}, \theta_{k,l} ) \Big) + \frac{1}{2} \fb  L  g_1^{\alpha\beta} \p_\alpha \theta_{k,l} \, \p_\beta \theta_{k,l} .
\end{align}

According to \eqref{Qb111}, the first term in \eqref{Qb12} can be controlled by $\delta^3 M^6$ as $| L  \fb| = |\Lb \theta \, L \Lb \theta + \Lb \phi \, L \Lb \phi | \lesssim |u|^{-3} M^2$. For the second term in \eqref{Qb12}, the derivative $L g_1^{\alpha\beta}$ provides one more $\delta$ than $\Lb g_1^{\alpha\beta}$ for the worst terms and there is also an additional decay $|u|^{-2} M^2$ in $ \fb $, then according to \eqref{Qb11'},
\bes
\iint_{D_{u, \ub}} \Big| \frac{1}{2} \fb  L  g_1^{\alpha\beta} \p_\alpha \theta_{k,l} \, \p_\beta \theta_{k,l}\Big| \lesssim \delta^3 M^6.
\ees
Thus,  the term \eqref{Qb12} can be controlled by
\bes
\iint_{D_{u, \ub}} \Big| \frac{1}{2} L \big( \fb \, g_1^{\alpha\beta} \big) \p_\alpha \theta_{k,l} \, \p_\beta \theta_{k,l} \Big| \lesssim \delta^3 M^6.
\ees

For the third term in \eqref{QB1}, we remind that the term $\fb L \theta_{k,l}$ in  $\tilde{\Lb} \theta_{k,l}$  can be controlled by $\Lb \theta_{k,l}$, as $\Vert \fb L \theta_{k,l} \Vert_{L^2(D_{u, \ub})} \lesssim \delta^{\frac{3}{2}} M^3 $ while  $\Vert \Lb \theta_{k,l} \Vert_{L^2(D_{u, \ub})} \lesssim \delta^{\frac{1}{2}} M$, and there is also an additional factor $|u|^{-2}$  in  $\fb L \theta_{k,l}$ which will provide us more decay. We only need to deal with $\p_\alpha g_1^{\alpha\beta} \, \p_\beta \theta_{k,l} \,\Lb \theta_{k,l}$, which is
\begin{align}\label{Qb13}
   & \, \p_\alpha g_1^{\alpha\beta} \, \p_\beta \theta_{k,l} \,\Lb \theta_{k,l} \nnb \\
  = & \, \p_\alpha \cos^2 \theta \big(\p^\alpha \phi \, \p^\beta \phi - g^{\alpha \beta} Q(\phi, \phi)\big) \, \p_\beta \theta_{k,l} \, \Lb \theta_{k,l} \nnb \\
   & + \cos^2 \theta  \Big( \p_\alpha \p^\alpha \phi \, \p^\beta \phi + \p^\alpha \phi \,  \p_\alpha \p^\beta \phi  - g^{\alpha \beta} \p_\alpha Q(\phi, \phi) \Big) \, \p_\beta \theta_{k,l} \, \Lb \theta_{k,l}  \nnb \\
  = & \, -\sin 2\theta \Big( Q(\theta, \phi) Q(\phi, \theta_{k,l} ) - Q(\phi, \phi) Q(\theta, \theta_{k,l} ) \Big) \Lb \theta_{k,l} \nnb \\
   & + \cos^2 \theta \Big( \p_\alpha \p^\alpha \phi \, Q(\phi, \theta_{k,l} ) + \p^\alpha \phi \,  \p_\alpha \p^\beta \phi \, \p_\beta \theta_{k,l} - g^{\alpha \beta} \p_\alpha Q(\phi, \phi) \, \p_\beta \theta_{k,l} \Big) \Lb \theta_{k,l}.
\end{align}
The first term of \eqref{Qb13} has one of the worst terms as
\begin{align*}
   \iint_{D_{u, \ub}} \Big| \sin 2\theta  \, L \theta \, \Lb \phi \, \Lb \phi \, L \theta_{k,l} \, \Lb \theta_{k,l} \Big|
  \lesssim  \iint_{D_{u, \ub}} \delta^2 |u|^{-\frac{9}{2}} M^4 \Big|\, L \theta_{k,l} \, \Lb \theta_{k,l} \Big|
  \lesssim  \delta^4 M^6.
\end{align*}
For the first term in the last line of \eqref{Qb13}, we have
\be\label{laplace}
| \p_\alpha \p^\alpha \phi | = \Big|-L \Lb \phi +\po^i \po_i \phi +\frac{1}{r}(L-\Lb) \phi \Big| \lesssim |u|^{-2} M ,
\ee
 then
\begin{align*}
    & \, \iint_{D_{u, \ub}} \Big| \p_\alpha \p^\alpha \phi \, Q(\phi, \theta_{k,l} ) \Lb \theta_{k,l} \Big| \\
   \lesssim & \, \iint_{D_{u, \ub}} \Big| |u|^{-2} M \, \Lb \theta \,  L \theta_{k,l} \Lb \theta_{k,l} \Big|
    \lesssim \iint_{D_{u, \ub}} |u|^{-3} M^2 \,  \Big| L \theta_{k,l} \Lb \theta_{k,l} \Big|  \lesssim  \delta^2 M^4.
\end{align*}
According to Lemma \ref{pp}, the second term in the last line of \eqref{Qb13} has the worst decay term as
\begin{align*}
   \iint_{D_{u, \ub}} \Big| L \phi \, \Lb^2 \phi L \theta_{k,l} \,  \Lb \theta_{k,l} \Big| \lesssim \iint_{D_{u, \ub}} |u|^{-\frac{5}{2}} M^2 \,  \Big|  L \theta_{k,l} \,  \Lb \theta_{k,l} \Big| \lesssim \delta^2 M^4.
\end{align*}
Similarly, the third term in the last line of \eqref{Qb13} also has the worst term $L \phi \, \Lb^2 \phi \, L \theta_{k,l}  \Lb \theta_{k,l}$, which can be controlled by $\delta^2 M^4$.
Thus, the last line of \eqref{Qb13} can be controlled by $\delta^2 M^4$ and the third term in \eqref{QB1} also can be controlled by $\delta^2 M^4$.

The fourth term in \eqref{QB1} can be controlled by the worst term as
\begin{align*}
   \iint_{D_{u, \ub}} \Big| g_1^{\alpha\beta} \p_\alpha \fb \, \p_\beta \theta_{k,l} \, L \theta_{k,l} \Big|
  \lesssim   \iint_{D_{u, \ub}} \Big| L \phi \, \Lb \phi \cdot  \Lb \phi \, \Lb^2 \phi \cdot ( L \theta_{k,l} )^2 \Big|
  \lesssim \delta^3 M^6.
\end{align*}

For the last term in \eqref{QB1}, we have
\begin{align*}
   & \, g_1^{i \beta} \, \po_i \theta_{k,l} \,\p_\beta \theta_{k,l} \\
  = & \, \cos^2 \theta  \big( \p^i \phi \, \p^\beta \phi - g^{i\beta} \,  Q( \phi, \phi) \big) \,  \po_i \theta_{k,l} \,\p_\beta \theta_{k,l} \\
  = & \, \cos^2 \theta \Big((\po^i + \omega^i \p_r) \phi \, \p^\beta \phi \, \po_i \theta_{k,l} \,\p_\beta \theta_{k,l} - Q( \phi, \phi) \,  \po^i \theta_{k,l} \,(\po_i + \omega_i \p_r) \theta_{k,l} \Big) \\
  = & \, \cos^2 \theta \Big( \po^i \phi \,  \po_i \theta_{k,l} Q(\phi , \theta_{k,l} ) -  Q( \phi, \phi) \po^i \theta_{k,l} \,  \po_i \theta_{k,l} \Big),
\end{align*}
then
\begin{align*}
  & \, \iint_{D_{u, \ub}} \Big| \frac{1}{r} (1-\fb)  g_1^{i \beta} \, \po_i \theta_{k,l} \,\p_\beta \theta_{k,l} \Big| \\
  \lesssim & \,  \iint_{D_{u, \ub}} |u|^{-1} \Big( \big| \po^i \phi \,  \po_i \theta_{k,l} \, \Lb \phi \, L \theta_{k,l} \big| + \big| L \phi \, \Lb \phi \, \po^i \theta_{k,l} \,  \po_i \theta_{k,l}  \big| \Big)  \\
  \lesssim & \, \delta^{\frac{3}{4}} M^2  \Big( \int_0^{u} \Vert L \theta_{k,l} \Vert^2_{L^2(C_{u'})} \di u'\Big)^{\frac{1}{2}} \Big( \int_0^{u} \Vert \slashed{\nabla}  \theta_{k,l} \Vert^2_{L^2(C_{u'})} \di u'\Big)^{\frac{1}{2}} + \delta M^2  \int_0^{u} \Vert \slashed{\nabla}  \theta_{k,l} \Vert^2_{L^2(C_{u'})} \di u' \\
  \lesssim & \, \delta^{\frac{3}{4}} M^2 \cdot \delta^{\frac{3}{2}} M \cdot \delta M + \delta M^2 \cdot \delta^2 M^2  \lesssim \delta^3 M^4.
\end{align*}

In summary, the term $\Qb^{(1)} $ in \eqref{First-E} can be controlled by $\delta^2 M^4$. By an analogous process, $\Qb^{(2)}$, $\Qb^{(3)}$, $\Qb^{(4)}$ are also bounded by $\delta^2 M^4$.

Then we deal with $K$ in \eqref{First-E}.

For $K^{\Lb} [\theta_{k,l}]$, we have
\bes
\iint_{D_{u, \ub}} \big| -\frac{1}{r} L \theta_{k,l} \Lb \theta_{k,l} \big|
\lesssim  \Big( \int_0^{u} \Vert L \theta_{k,l} \Vert^2_{L^2(C_{u'})} \di u'\Big)^{\frac{1}{2}} \Big( \int_{\ub_0}^{\ub} |\ub'|^{-2}\Vert \Lb \theta_{k,l} \Vert^2_{L^2(\Cb_{\ub'})} \di \ub'\Big)^{\frac{1}{2}} \lesssim \delta^2 M^2.
\ees
The term with the worst decay in $K^{\fb L} [\theta_{k,l}]$ can be handled as
\bes
\iint_{D_{u, \ub}} \big| - \frac{1}{2}  \Lb \fb  \, (L \theta_{k,l})^2 \big| \lesssim \iint_{D_{u, \ub}} \delta^{-1} |\ub|^{-2}  M^2 \, |L \theta_{k,l} |^2 \lesssim \delta^2 M^4.
\ees
The worst term in $K^{\cos^2 \theta \Lb} [\phi_{k,l}]$ is $- \frac{1}{r} \cos^2 \theta L \phi_{k,l}\, \Lb \phi_{k,l}$, which can be dealt with the similar way as $K^{\Lb} [\theta_{k,l}]$ and can be bounded by $\delta^2 M^4$.

For
$ K^{\fb \cos^2 \theta L} [\phi_{k,l}]$, the worst term is $- \frac{1}{2}  (\Lb \fb) \cos^2 \theta |L \phi_{k,l}|^2$ , which is similar to that in $K^{\fb L} [\theta_{k,l}]$ and also can be bounded by $\delta^2 M^4$.

We turn to the term of $\big(\Box \theta_{k,l} - g_1^{\alpha \beta} \p_\alpha  \p_\beta \theta_{k,l}  -h_1^{\alpha \beta} \p_\alpha \p_\beta \phi_{k,l} \big) \tilde{\Lb} \theta_{k,l}  $ in \eqref{First-E} and
refer back to equation \eqref{Eq2-1}. For convenience, we derive some estimates for null and double null forms to prepare for the subsequent analysis.
\begin{lemma}
Under the assumption \eqref{assump}, for $0 \leq k \leq N$, it holds that
\be\label{i1i2}
\sum_{\substack{|i_1| + |i_2| \leq k \\ |i_1| ,|i_2| \leq |k/2|} } \big| Q( \phi_{i_1}, \phi_{i_2}) \big| \lesssim \delta |\ub|^{-\frac{5}{2}} M^2,
\ee
\be\label{i2}
\sum_{\substack{|i_1| + |i_2| \leq k \\ |i_1| \leq |k|, |i_2| \leq |k/2|} } \big| Q( \phi_{i_1}, \phi_{i_2}) \big| \lesssim \sum_{ |i_1| \leq |k|} \Big(|\ub|^{-1} | L \phi_{i_1} | + \delta |\ub|^{-\frac{3}{2}}  | \Lb \phi_{i_1} | +  \delta^{\frac{3}{4}} |\ub|^{-\frac{3}{2}}   |\po \phi_{i_1} |\Big) M ,
\ee
\be\label{i1i2i3}
\sum_{\substack{|i_1| + |i_2| + |i_3| \leq |k| \\ |i_1|\leq |k/2|, \,  |i_2|,  |i_3| < |k/2|} } \big | \, Q^{\alpha \beta} (\phi_{i_1}, Q_{\alpha \beta} (\theta_{i_2},   \phi_{i_3})) \big| \lesssim \delta |\ub|^{-4} M^3,
\ee
\begin{align}\label{i2i3}
& \, \sum_{\substack{|i_1| + |i_2| + |i_3| \leq |k| \\ |i_1| \leq |k|, |i_2|,  |i_3| < |k/2| } } \big | \, Q^{\alpha \beta} (\phi_{i_1}, Q_{\alpha \beta} (\theta_{i_2},   \phi_{i_3})) \big| \nnb \\
\lesssim  & \, \sum_{ |i_1| \leq |k|}  \Big( |\ub|^{-\frac{5}{2}}  | L \phi_{i_1} | + \delta |\ub|^{-\frac{7}{2}} | \Lb \phi_{i_1} | +  \delta^{\frac{3}{4}} |\ub|^{-\frac{7}{2}} |\po \phi_{i_1} | \Big) M^2,
\end{align}
and
\begin{align}\label{i1i3}
& \, \sum_{\substack{|i_1| + |i_2| + |i_3| \leq |k| \\ |i_1|,  |i_3| \leq |k/2| , \, |i_2| < |k|  } } \Big | \, Q^{\alpha \beta} (\phi_{i_1}, Q_{\alpha \beta} (\theta_{i_2},   \phi_{i_3})) \Big| \nnb \\
\lesssim & \, \sum_{|i_2| < |k|} \Big(\delta |\ub|^{-3} | \Lb \theta_{i_2+1}| + |\ub|^{-\frac{5}{2}} | L \theta_{i_2+1}| + \delta^{\frac{3}{4}} |\ub|^{-\frac{7}{2}} | \po \theta_{i_2+1}| \Big) M^2.
\end{align}

\end{lemma}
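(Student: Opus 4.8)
All five bounds are pointwise, so the plan is to run each (double) null form through its algebraic decomposition — Lemma \ref{null form} for $Q$, Lemma \ref{double null} for $Q^{\alpha\beta}(\cdot,Q_{\alpha\beta}(\cdot,\cdot))$ — and then feed in the $L^\infty$ decay estimates of Lemmas \ref{Sobolev} and \ref{decay} on the low-order factors, keeping the derivatives of the highest-order factor un-estimated. Throughout, region II gives $r\sim|\ub|$ and $|\ub|\gtrsim1$, and the bootstrap \eqref{assump} gives $\delta^{1/2}M<1$; these are what let surplus weights be discarded. The range hypotheses in Lemmas \ref{Sobolev}, \ref{decay} are met because, in \eqref{i1i2i3} and \eqref{i2i3}, the low indices satisfy $i_2,i_3<k/2\le N/2\le N-3$ (here $N\ge6$ is exactly what is used), so even their second derivatives are admissible; in \eqref{i1i3} the low indices $i_1,i_3\le k/2\le N-3$ likewise, while the high index obeys $i_2+1\le k\le N$.

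\textbf{The single null forms.} Apply Lemma \ref{null form}: $|Q(\phi_{i_1},\phi_{i_2})|\lesssim|L\phi_{i_1}|\,|\Lb\phi_{i_2}|+|\Lb\phi_{i_1}|\,|L\phi_{i_2}|+|\po\phi_{i_1}|\,|\po\phi_{i_2}|$. For \eqref{i1i2}, estimate every factor by Lemma \ref{Sobolev}: the mixed products give $(\delta|\ub|^{-3/2}M)(|\ub|^{-1}M)=\delta|\ub|^{-5/2}M^2$, and the angular product gives $\delta^{3/2}|\ub|^{-3}M^2\lesssim\delta|\ub|^{-5/2}M^2$ since $\delta^{1/2}|\ub|^{-1/2}\lesssim1$. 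For \eqref{i2}, estimate only the low factor $\phi_{i_2}$ by Lemma \ref{Sobolev} ($|\Lb\phi_{i_2}|\lesssim|\ub|^{-1}M$, $|L\phi_{i_2}|\lesssim\delta|\ub|^{-3/2}M$, $|\po\phi_{i_2}|\lesssim\delta^{3/4}|\ub|^{-3/2}M$), which leaves precisely the three terms on the right-hand side.

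\textbf{The double null forms.} Here $Q^{\alpha\beta}(\phi_{i_1},Q_{\alpha\beta}(\theta_{i_2},\phi_{i_3}))$ is the double null form of Lemma \ref{double null} with $\psi=\phi_{i_1}$, $\xi=\theta_{i_2}$, $\chi=\phi_{i_3}$, so start from \eqref{double Q}; note $\psi$ enters only through $L\psi,\Lb\psi,\po\psi$, while $\xi,\chi$ may carry two derivatives. For \eqref{i1i2i3}, all three indices are low, so every factor in \eqref{double Q} — including the second-order ones — goes into $L^\infty$ via Lemmas \ref{Sobolev}, \ref{decay}; the slowest contribution, $|L\phi_{i_1}|\,|\Lb^2\theta_{i_2}|\,|L\phi_{i_3}|\lesssim(\delta|\ub|^{-3/2}M)(\delta^{-1}|\ub|^{-1}M)(\delta|\ub|^{-3/2}M)=\delta|\ub|^{-4}M^3$, already matches, and the $r^{-1}$ lines are better. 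For \eqref{i2i3}, only $\phi_{i_1}$ is high and it carries a single derivative, so factor $|L\phi_{i_1}|$, $|\Lb\phi_{i_1}|$, $|\po\phi_{i_1}|$ out of \eqref{double Q} and bound the remaining $\theta_{i_2},\phi_{i_3}$ products in $L^\infty$; their coefficients compute to $|\ub|^{-5/2}M^2$, $\delta|\ub|^{-7/2}M^2$, $\delta^{3/4}|\ub|^{-7/2}M^2$. For \eqref{i1i3}, the high factor $\theta_{i_2}$ may be hit by two derivatives, so convert each second-order derivative of $\theta_{i_2}$ into a first-order derivative of $\theta_{i_2+1}$ exactly as in the proof of Lemma \ref{decay}, using $\po_i=\frac{\omega^j}{r}\Omega_{ij}$ and $L=\frac{S+\omega^j\Omega_{0j}}{t+r}$ to trade one derivative for a $\Gamma$ with no loss: $|L^2\theta_{i_2}|\lesssim|\ub|^{-1}|L\theta_{i_2+1}|$, $|L\Lb\theta_{i_2}|\lesssim|\ub|^{-1}|\Lb\theta_{i_2+1}|$, $|\po^2\theta_{i_2}|,|\p\po\theta_{i_2}|\lesssim|\ub|^{-1}(|L\theta_{i_2+1}|+|\Lb\theta_{i_2+1}|+|\po\theta_{i_2+1}|)$, the only $\delta^{-1}$ loss being $|\Lb^2\theta_{i_2}|\lesssim\delta^{-1}|\Lb\theta_{i_2+1}|$. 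Inserting these together with the $L^\infty$ bounds on $\phi_{i_1},\phi_{i_3}$ into \eqref{double Q}, the term $|L\phi_{i_1}|\,|\Lb^2\theta_{i_2}|\,|L\phi_{i_3}|\lesssim\delta^2|\ub|^{-3}M^2\cdot\delta^{-1}|\Lb\theta_{i_2+1}|$ gives the first term, $|L\phi_{i_1}|\,|L\theta_{i_2}|\,|\Lb^2\phi_{i_3}|\lesssim|\ub|^{-5/2}M^2|L\theta_{i_2+1}|$ the second, and the $|\po\phi_{i_1}|$-lines the $\delta^{3/4}|\ub|^{-7/2}|\po\theta_{i_2+1}|M^2$ contribution, which is \eqref{i1i3}.

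\textbf{Main difficulty.} The obstruction is purely combinatorial: one must run through each of the $\sim15$ monomials of \eqref{double Q} in all three cases, simultaneously tracking the power of $\delta$ (each $L$-derivative contributing $\delta^{+1}$, each $\po$ contributing $\delta^{+3/4}$, each $\Lb$ contributing $\delta^0$, and a second $\Lb$ on the high factor costing $\delta^{-1}$), the power of $|\ub|$ (each Sobolev bound $|\ub|^{-3/2}$ or $|\ub|^{-1}$, each commutator or $r^{-1}$ an extra $|\ub|^{-1}$), and the power of $M$; one then verifies that the binding pairings — a bad derivative $\Lb\phi_{i_1}$, or $\Lb^2$ acting on the high factor — are still absorbed, which is where $\delta^{1/2}M<1$ and $|\ub|\gtrsim1$ from region II are consumed.
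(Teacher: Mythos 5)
Your proposal is correct and follows the same route as the paper: decompose $Q$ and the double null form via Lemmas \ref{null form} and \ref{double null}, insert the $L^\infty$ decay of Lemmas \ref{Sobolev} and \ref{decay} on the low-order factors, and for \eqref{i1i3} trade the second derivatives of the high-order factor $\theta_{i_2}$ for first derivatives of $\theta_{i_2+1}$ exactly as in \eqref{LL}--\eqref{popo}, with $\Lb^2$ the only place a $\delta^{-1}$ is lost. Your worst-term computations and the explicit check that $i_2,i_3<k/2\le N-3$ legitimizes the pointwise estimates are details the paper leaves implicit, but the argument is the same.
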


\begin{proof}
Estimates of \eqref{i1i2} and \eqref{i2} can be directly obtained from Lemma \ref{null form} and Lemma \ref{Sobolev}. Estimates of \eqref{i1i2i3} and \eqref{i2i3} can be obtained from Lemma \ref{double null}, Lemma \ref{Sobolev} and Lemma \ref{Sob-L}.

For \eqref{i1i3}, according to \eqref{LL}, it holds that
\bes
| L^2 \theta_{i_2} | \lesssim |\ub|^{-1} | L \theta_{i_2} | + |\ub|^{-1} | L \theta_{i_2+1} | \lesssim |\ub|^{-1} | L \theta_{i_2+1} |.
\ees

Similarly we can obtain
\bes
| L \Lb \theta_{i_2} | = | \Lb L \theta_{i_2} | \lesssim |\ub|^{-1} | \Lb \theta_{i_2+1} |, \quad
| \Lb^2 \theta_{i_2} |  \lesssim \delta^{-1} | \Lb \theta_{i_2+1} |, \quad
| \p \po \theta_{i_2} |  \lesssim |\ub|^{-1} | \Lb \theta_{i_2+1} |,
\ees
and
\bes
| \po^2 \theta_{i_2} |  \lesssim |\ub|^{-1} | \po \theta_{i_2+1} |.
\ees

Therefore, combining Lemma \ref{double null}, Lemma \ref{Sobolev} and Lemma \ref{Sob-L}, we have
\begin{align*}
   & \, \sum_{\substack{|i_1| + |i_2| + |i_3| \leq |k| \\ |i_1|,  |i_3| \leq |k/2| , \, |i_2| < |k|  } } \Big | \, Q^{\alpha \beta} (\phi_{i_1}, Q_{\alpha \beta} (\theta_{i_2},   \phi_{i_3})) \Big| \\
  \lesssim & \, \sum_{|i_2| < |k|} \Big( \delta^2 |\ub|^{-3} |\Lb^2 \theta_{i_2}| + \delta |\ub|^{-\frac{5}{2}} |L \Lb \theta_{i_2}| + |\ub|^{-2} |L^2 \theta_{i_2}| + \delta^{\frac{3}{2}} |\ub|^{-3} | \p \po \theta_{i_2}|  + \delta^{\frac{3}{4}} |\ub|^{-\frac{5}{2}} |\po^2 \theta_{i_2}| \\
  & \, + \delta |\ub|^{-\frac{7}{2}} | \Lb \theta_{i_2}| + |\ub|^{-\frac{5}{2}} | L \theta_{i_2}| + \delta^{\frac{3}{4}} |\ub|^{-\frac{7}{2}} | \po \theta_{i_2}| \Big) M^2\\
  \lesssim & \, \sum_{|i_2| < |k|} \Big(\delta |\ub|^{-3} | \Lb \theta_{i_2+1}| + |\ub|^{-\frac{5}{2}} | L \theta_{i_2+1}| + \delta^{\frac{3}{4}} |\ub|^{-\frac{7}{2}} | \po \theta_{i_2+1}| \Big) M^2.
\end{align*}

\end{proof}

Now let us commence with the analysis of the first term on the right-hand side of \eqref{Eq2-1}, noting that we have multiplied both sides of \eqref{Eq2-1} by $\delta^l \tilde{\Lb} \theta_{k,l}$.
We get that
\begin{align*}
 & \, \iint_{D_{u, \ub}} \Big | \,\delta^l \sum_{\substack{|i_1| + \cdots + |i_m| \leq |k| \\ 2 \leq m \leq |k|+2}} a_{i_1, \cdots, i_m} \, (\sin (2\theta))^{(m-2)} Q(Z^{i_1} \phi, Z^{i_2} \phi) Z^{i_3} \theta \cdots Z^{i_m} \theta \cdot \tilde{\Lb} \theta_{k,l}  \Big| \\
\lesssim & \, R_1 + R_2 + R_3,
\end{align*}
where
\bes
R_1 = \iint_{D_{u, \ub}}\sum_{\substack{|i_1| + |i_2| \leq |k| \\ |i_1| \leq |k|,|i_2| \leq |k/2| \\ m=2}}  \Big| \,\delta^l \sin (2\theta) Q(Z^{i_1} \phi, Z^{i_2} \phi) \cdot \tilde{\Lb} \theta_{k,l} \Big|,
\ees
\bes
R_2 = \iint_{D_{u, \ub}} \sum_{\substack{ |i_1| + \cdots + |i_m| \leq |k| \\ |i_1| \leq |k|,|i_2| \leq |k/2| \\ \max\{|i_3|, \cdots, |i_m|\} \leq |k/2| \\ 3 \leq m \leq |k|+2}} \Big| \,\delta^l Q(Z^{i_1} \phi, Z^{i_2} \phi) Z^{i_3} \theta \cdots Z^{i_m} \theta \cdot \tilde{\Lb} \theta_{k,l} \Big|,
\ees
and
\bes
R_3 = \, \iint_{D_{u, \ub}} \sum_{\substack{|i_1| + \cdots + |i_m| \leq |k| \\ |i_1|, |i_2| \leq |k/2| \\ \max\{|i_3|, \cdots, |i_m|\} \leq k \\ 3 \leq m \leq |k|+2}} \Big| \,\delta^l Q(Z^{i_1} \phi, Z^{i_2} \phi) Z^{i_3} \theta \cdots Z^{i_m} \theta \cdot \tilde{\Lb} \theta_{k,l} \Big|.
\ees

For $R_1$, combining Lemma \ref{Sob-L} and \eqref{i2}, we have
\begin{align*}
R_1
\lesssim & \iint_{D_{u, \ub}} \delta |\ub|^{-1}  M \cdot \sum_{ |i_1| \leq |k|} \Big(|\ub|^{-1} | L \phi_{i_1} | + \delta |\ub|^{-\frac{3}{2}}  | \Lb \phi_{i_1} | +  \delta^{\frac{3}{4}} |\ub|^{-\frac{3}{2}}   |\po \phi_{i_1} |\Big) M  \cdot | \tilde{\Lb} \theta_{k,l}  |\\
\lesssim & \, \delta M^2 \sum_{ |i_1| \leq |k|}  \Big( \int_0^{u} \Vert L \phi_{i_1} \Vert^2_{L^2(C_{u'})} \di u'\Big)^{\frac{1}{2}} \Big( \int_{\ub_0}^{\ub} |\ub'|^{-4}\Vert \Lb \theta_{k,l} \Vert^2_{L^2(\Cb_{\ub'})} \di \ub'\Big)^{\frac{1}{2}} \\
& + \delta^{2} M^2 \sum_{ |i_1| \leq |k|} \Big( \int_{\ub_0}^{\ub} |\ub'|^{-3}\Vert \Lb \phi_{i_1} \Vert^2_{L^2(\Cb_{\ub'})} \di \ub'\Big)^{\frac{1}{2}}  \Big( \int_{\ub_0}^{\ub} |\ub'|^{-2}\Vert \Lb \theta_{k,l} \Vert^2_{L^2(\Cb_{\ub'})} \di \ub'\Big)^{\frac{1}{2}} \\
& + \delta^{\frac{7}{4}} M^2 \sum_{ |i_1| \leq |k|}  \Big( \int_0^{u} \Vert \slashed{\nabla}  \phi_{i_1} \Vert^2_{L^2(C_{u'})} \di u'\Big)^{\frac{1}{2}} \Big( \int_{\ub_0}^{\ub} |\ub'|^{-5}\Vert \Lb \theta_{k,l} \Vert^2_{L^2(\Cb_{\ub'})} \di \ub'\Big)^{\frac{1}{2}} \\
\lesssim & \,  \delta M^2 \cdot \delta^{\frac{3}{2}} M  \cdot \delta^{\frac{1}{2}} M + \delta^2 M^2 \cdot \delta^{\frac{1}{2}} M \cdot \delta^{\frac{1}{2}} M + \delta^{\frac{7}{4}} M^2 \cdot \delta M \cdot  \delta^{\frac{1}{2}} M \lesssim \delta^3 M^4.
\end{align*}
Reminding that the term $\fb L \theta_{k,l}$ in  $\tilde{\Lb} \theta_{k,l}$  can be controlled by $\Lb \theta_{k,l}$, since $\Vert \fb L \theta_{k,l} \Vert_{L^2(D_{u, \ub})} \lesssim \delta^{\frac{3}{2}} M^3 $ and  $\Vert \Lb \theta_{k,l} \Vert_{L^2(D_{u, \ub})} \lesssim \delta^{\frac{1}{2}} M$.

Similar calculation for $R_2$ gives that
\begin{align*}
R_2
\lesssim & \iint_{D_{u, \ub}} \sum_{\substack{ |i_1| + \cdots + |i_m| \leq |k| \\ |i_1| \leq |k| \\ \max\{|i_3|, \cdots, |i_m|\} \leq |k/2| \\ 3 \leq m \leq |k|+2}} \Big|  \Big(  |\ub|^{-1} | L \phi_{i_1} | + \delta |\ub|^{-\frac{3}{2}}  | \Lb \phi_{i_1} | +  \delta^{\frac{3}{4}} |\ub|^{-\frac{3}{2}}   |\po \phi_{i_1} |\Big) M  \cdot \theta_{i_3} \cdots  \theta_{i_m} \cdot \tilde{\Lb} \theta_{k,l} \Big|\\
\lesssim & \iint_{D_{u, \ub}}\sum_{\substack{ |i_1| \leq |k|\\ 3 \leq m \leq |k|+2}} \big( \delta |\ub|^{-1}  M \big)^{m-2} \Big(  |\ub|^{-1} | L \phi_{i_1} | + \delta |\ub|^{-\frac{3}{2}}  | \Lb \phi_{i_1} | +  \delta^{\frac{3}{4}} |\ub|^{-\frac{3}{2}}   |\po \phi_{i_1} |\Big) M \cdot | \tilde{\Lb} \theta_{k,l} | \\
\lesssim & \,\delta^{3} M^4.
\end{align*}

For $R_3$, without loss of generality, we assume that $\max\{|i_3|, \cdots, |i_m|\} = |i_3|$.  In the short pulse
region II, notice that for any Lorentz vector field,
\be\label{Z}
|Z \theta| \leq (1+|t+r|) \, |\p \theta | \lesssim |\ub| \, |\p \theta |.
\ee
Combining Lemma \ref{Sob-L}, \eqref{i1i2} and \eqref{Z}, we have
\begin{align*}
R_3
\lesssim & \, \iint_{D_{u, \ub}} \sum_{\substack{|i_1| + \cdots + |i_m| \leq |k| \\ |i_1|, |i_2| \leq |k/2| \\ \max\{|i_3|, \cdots, |i_m|\} \leq k \\ 3 \leq m \leq |k|+2}}  \Big| \,\delta^l  Q(Z^{i_1} \phi, Z^{i_2} \phi) \big(|\ub| \, \p Z^{i_3-1} \theta \big) \cdots Z^{i_m} \theta \cdot \tilde{\Lb} \theta_{k,l} \Big| \\
\lesssim & \, \iint_{D_{u, \ub}} \sum_{\substack{ |i_3| \leq k \\ 3 \leq m \leq |k|+2}} \big( \delta |\ub|^{-\frac{5}{2}}  M^2 \big) \big( \delta |\ub|^{-1}  M \big)^{m-3} \Big|  \big(|\ub| \, \p \theta_{i_3-1} \big)  \cdot \tilde{\Lb} \theta_{k,l} \Big|\\
\lesssim & \, \delta M^2 \big( \int_{\ub_0}^{\ub} |\ub'|^{-\frac{3}{2}}\Vert \Lb \phi_{k-1} \Vert^2_{L^2(\Cb_{\ub'})} \di \ub'\big)^{\frac{1}{2}}  \big( \int_{\ub_0}^{\ub} |\ub'|^{-\frac{3}{2}}\Vert \Lb \theta_{k,l} \Vert^2_{L^2(\Cb_{\ub'})} \di \ub'\big)^{\frac{1}{2}}
 \lesssim  \delta^2 M^4.
\end{align*}

Similarly, the second term of the right-hand side of \eqref{Eq2-1} can also be bounded by $\delta^2 M^4$, because the product of two null forms $Q^{\alpha \beta } Q_{\alpha \beta}$ enjoys better decay than $Q$.

For the third term of \eqref{Eq2-1}, we first handle the double null term.
We decompose the term as
\begin{align*}
& \iint_{D_{u, \ub}} \Big | \,\delta^l \sum_{\substack{|i_1| + \cdots + |i_m| \leq |k| \\ |i_2|, |i_3| < |k| \\ 3 \leq m \leq |k|+3} } c_{i_1, \cdots, i_m} \, (\cos^2 \theta)^{(m-3)}  Q^{\alpha \beta} (Z^{i_1}\phi, Q_{\alpha \beta} (Z^{i_2}\theta,  Z^{i_3} \phi)) \\
 & \qquad \qquad \cdot Z^{i_4} \theta \cdots Z^{i_m} \theta \cdot \tilde{\Lb} \theta_{k,l} \, \Big|  \\
\lesssim & \, H_1 + H_2 + H_3,
\end{align*}
where
\bes
H_1 = \iint_{D_{u, \ub}} \sum_{\substack{|i_1| + \cdots + |i_m| \leq |k| \\ |i_1| \leq |k|, |i_2|,  |i_3| < |k/2| \\ 3 \leq m \leq |k|+3} } \Big | \,\delta^l \, Q^{\alpha \beta} (Z^{i_1}\phi, Q_{\alpha \beta} (Z^{i_2}\theta,  Z^{i_3} \phi))Z^{i_4} \theta \cdots Z^{i_m} \theta \cdot \tilde{\Lb} \theta_{k,l} \Big|,
\ees
\bes
H_2 = \iint_{D_{u, \ub}} \sum_{\substack{ |i_1| + \cdots + |i_m| \leq |k| \\ |i_2| < |k|, |i_1|, |i_3| \leq |k/2| \\ 3 \leq m \leq |k|+3} } \Big | \,\delta^l \, Q^{\alpha \beta} (Z^{i_1}\phi, Q_{\alpha \beta} (Z^{i_2}\theta,  Z^{i_3} \phi))Z^{i_4} \theta \cdots Z^{i_m} \theta \cdot \tilde{\Lb} \theta_{k,l} \Big|,
\ees
and
\bes
H_3 = \iint_{D_{u, \ub}} \sum_{\substack{ |i_1| + \cdots + |i_m| \leq |k| \\ |i_1| \leq |k/2| , |i_2|, |i_3| < |k/2| \\ \max\{|i_4|, \cdots, |i_m|\} \leq |k|  \\ 4 \leq m \leq |k|+3} } \Big | \,\delta^l \,  (\cos^2 \theta)^{(m-3)} Q^{\alpha \beta} (Z^{i_1}\phi, Q_{\alpha \beta} (Z^{i_2}\theta,  Z^{i_3} \phi))Z^{i_4} \theta \cdots Z^{i_m} \theta \cdot \tilde{\Lb} \theta_{k,l} \Big|.
\ees

For $H_1$, according to Lemma \ref{Sob-L} and \eqref{i2i3}, we obtain
\begin{align*}
H_1
\lesssim & \iint_{D_{u, \ub}}   \sum_{\substack{ |i_1| \leq |k| \\ 3 \leq m \leq |k|+3}} \big( \delta |\ub|^{-1}  M \big)^{m-3} \Big( |\ub|^{-\frac{5}{2}}  | L \phi_{i_1} | + \delta |\ub|^{-\frac{7}{2}} | \Lb \phi_{i_1} | +  \delta^{\frac{3}{4}} |\ub|^{-\frac{7}{2}} |\po \phi_{i_1} | \Big) M^2  \cdot \Big| \tilde{\Lb} \theta_{k,l} \Big| \\
\lesssim & \, M^2 \cdot \delta^{\frac{3}{2}} M \cdot  \delta^{\frac{1}{2}} M + \delta  M^2 \cdot \delta^{\frac{1}{2}} M \cdot  \delta^{\frac{1}{2}} M + \delta^{\frac{3}{4}} M^2 \cdot \delta M \cdot  \delta^{\frac{1}{2}} M \lesssim \delta^2  M^4.
\end{align*}

For $H_2$, according to \eqref{i1i3}, it holds that
\begin{align*}
H_2
\lesssim & \, \iint_{D_{u, \ub}}   \sum_{\substack{|i_2| < |k|\\ 3 \leq m \leq |k|+3} }
\big( \delta |\ub|^{-1}  M \big)^{m-3} \Big(\delta |\ub|^{-3} | \Lb \theta_{i_2+1}| + |\ub|^{-\frac{5}{2}} | L \theta_{i_2+1}| \\
& \qquad \qquad\qquad\qquad \qquad \qquad \qquad \quad + \delta^{\frac{3}{4}} |\ub|^{-\frac{7}{2}} | \po \theta_{i_2+1}| \Big) M^2 \cdot \big| \tilde{\Lb} \theta_{k,l} \big| \\
\lesssim & \, M^2 \Big( \delta \cdot \delta^{\frac{1}{2}} M \cdot \delta^{\frac{1}{2}} M + \delta^{\frac{3}{2}} M \cdot \delta^{\frac{1}{2}} M + \delta^{\frac{3}{4}} \cdot  \delta M \cdot \delta^{\frac{1}{2}} M \Big) \lesssim \delta^2  M^4.
\end{align*}

 For $H_3$, without loss of generality, we assume that $\max\{|i_4|, \cdots, |i_m|\} = |i_4|$, then according to \eqref{i1i2i3}, we have
\begin{align*}
H_3
\lesssim & \iint_{D_{u, \ub}} \Big( \sum_{\substack{|i_4| \leq |k| \\  m = 4 }}  | \sin 2 \theta | +  \sum_{\substack{|i_4| \leq |k| \\ 4 < m \leq |k|+3} } \big( \delta |\ub|^{-1}  M \big)^{m-4} \Big) \cdot \delta |\ub|^{-4}  M^3 \cdot |\ub| \, |\p \theta_{i_4-1}| \cdot | \tilde{\Lb} \theta_{k,l} | \\
\lesssim & \iint_{D_{u, \ub}}  \delta |\ub|^{-1}  M  \cdot \delta |\ub|^{-3}  M^3 \cdot  |\p \theta_{k-1}| \cdot | \tilde{\Lb} \theta_{k,l} | \lesssim \delta^2 M^4 \cdot \delta^{\frac{1}{2}} M \cdot \delta^{\frac{1}{2}} M \lesssim \delta^3 M^6.
\end{align*}

Up to this point, we have established that the third term in equation \eqref{Eq2-1} is bounded by $ \delta^2 M^4$.
Now, we proceed to examine the last term of \eqref{Eq2-1}, which is
\begin{align}
& \iint_{D_{u, \ub}} \Big| \cos^2 \theta \Big( Q^{\alpha \beta} (\phi , \p_\beta \phi) \p_\alpha \theta_{k,l} - Q^{\alpha \beta} (\phi , \p_\beta \theta) \p_\alpha \phi_{k,l} \Big) \cdot \tilde{\Lb} \theta_{k,l}  \Big|. \label{fourth}
\end{align}
According to \eqref{p1} and \eqref{laplace}, the first term of equation \eqref{fourth} can be estimated to be
\begin{align*}
 & \, \iint_{D_{u, \ub}} \Big| \cos^2 \theta \, Q^{\alpha \beta} (\phi , \p_\beta \phi) \p_\alpha \theta_{k,l}  \cdot \tilde{\Lb} \theta_{k,l} \Big| \\
  \lesssim  & \, \iint_{D_{u, \ub}} \Big| \big( \p^\alpha \phi \, \p^\beta \p_\beta \phi - \p^\beta \phi \, \p^\alpha \p_\beta \phi \big)  \, \p_\alpha \theta_{k,l} \cdot \tilde{\Lb} \theta_{k,l} \Big| \\
  \lesssim & \iint_{D_{u, \ub}} \Big( |\ub|^{-2} M \cdot |\Lb \phi | \, |L \theta_{k,l}| + \big( |\ub|^{-\frac{5}{2}} M^2 |L \theta_{k,l}| + \delta |\ub|^{-\frac{7}{2}} M^2   \cdot |\Lb \theta_{k,l} | \big) \Big)  \, |\tilde{\Lb} \theta_{k,l} |
   \lesssim  \delta^2 M^4.
\end{align*}
The second term in equation \eqref{fourth} mirrors the first term and is similarly bounded by $\delta^2 M^4$.

At last, the estimate of $\big(\cos^2 \theta \, \Box \phi_{k,l} - g_2^{\alpha \beta} \p_\alpha  \p_\beta \phi_{k,l}  -h_2^{\alpha \beta} \p_\alpha \p_\beta \theta_{k,l} \big) \tilde{\Lb} \phi_{k,l} $ in \eqref{First-E} is completely similar to $\big(\Box \theta_{k,l} - g_1^{\alpha \beta} \p_\alpha  \p_\beta \theta_{k,l}  -h_1^{\alpha \beta} \p_\alpha \p_\beta \phi_{k,l} \big) \tilde{\Lb} \theta_{k,l} $.

In summary, we have derived the following conclusions
\be
\delta \Eb^2_{\leq N}  ( u, \ub)  \lesssim \delta I^2_N(\theta_0, \theta_1, \phi_0, \phi_1) + \delta^2 M^4,
\ee
which is equivalent to
\be\label{con-1}
\Eb^2_{\leq N}  ( u, \ub) \lesssim I^2_N(\theta_0, \theta_1, \phi_0, \phi_1) + \delta M^4.
\ee

\subsubsection{Estimates for $E_{\leq N}$}

Recall the energy identity \eqref{energy1}. According to Lemma \ref{lem-en1}, along with equations \eqref{E} -- \eqref{sum}, the sum of the energy terms from the aforementioned identity for
$0 \leq l \leq k \leq N$ successfully bounds
$\delta^2 E_{\leq N} (u, \ub)$. Consequently, we must now address the energy inequality.
\begin{align}
\delta^2 E^2_{\leq N}  ( u, \ub)\lesssim & \,\delta^2 I^2_N(\theta_0, \theta_1, \phi_0, \phi_1) + \sum_{0\leq l \leq k \leq N} \iint_{D_{u, \ub}} \Big| Q^{(1)} + Q^{(2)} + Q^{(3)} + Q^{(4)} \Big| \nnb \\
&   + \Big|  K^L [\theta_{k,l}]+ K^{f \Lb} [\theta_{k,l}]+ K^{\cos^2 \theta L} [\phi_{k,l}] + K^{f \cos^2 \theta \Lb}[\phi_{k,l}] \Big| \nnb \\
&  + \Big| \big(\Box \theta_{k,l} - g_1^{\alpha \beta} \p_\alpha  \p_\beta \theta_{k,l}  -h_1^{\alpha \beta} \p_\alpha \p_\beta \phi_{k,l} \big) \tilde{L} \theta_{k,l} \, \Big|  \nnb \\
&  +\Big| \big(\cos^2 \theta \, \Box \phi_{k,l} - g_2^{\alpha \beta} \p_\alpha \p_\beta \phi_{k,l}  -h_2^{\alpha \beta} \p_\alpha \p_\beta \theta_{k,l} \big) \tilde{L} \phi_{k,l}  \, \Big|, \label{First-Eb}
\end{align}
where $Q^{(1)}$, $Q^{(2)}$, $Q^{(3)}$, $Q^{(4)}$ are defined in \eqref{Q1}, \eqref{Q2}, \eqref{Q3}, \eqref{Q4} and $K^{L} [\theta_{k,l}]$, $K^{f \Lb} [\theta_{k,l}]$, $K^{\cos^2 \theta L} [\phi_{k,l}]$, $K^{f \cos^2 \theta \Lb} [\phi_{k,l}]$ are defined in \eqref{KL}, \eqref{fLb}, \eqref{cosL}, \eqref{fcosLb}.

Similar to $\Eb_{\leq N} (u, \ub)$, we can control the first three integral terms of \eqref{First-Eb} by $ \delta^3 M^4$.

For the terms of $K$, all of them can be bounded by $ \delta^3 M^4$ except $ K^L [\theta_{k,l}]$ and $\frac{1}{r} \cos^2 \theta \, L \phi_{k,l}\, \Lb \phi_{k,l} $ in $K^{\cos^2 \theta L} [\phi_{k,l}]$. We will use the previous result for $\Eb_{\leq N} (u, \ub)$ to deduce
\begin{align*}
&\iint_{D_{u, \ub}} \Big| \frac{1}{r}  L \phi_{k,l}\, \Lb \phi_{k,l} \Big| + \Big| \frac{1}{r} \cos^2 \theta \, L \phi_{k,l}\, \Lb \phi_{k,l} \Big| \\
\lesssim & \iint_{D_{u, \ub}} \delta^{-1}  | L \phi_{k,l}|^2 + \delta |\Lb \phi_{k,l}|^2 \\
\lesssim &  \int_0^{u} \delta^{-1} \Vert L \phi_{k,l} \Vert^2_{L^2(C_{u'})} \di u' + \int_{\ub_0}^{\ub} \delta |\ub'|^{-2}\Vert \Lb \phi_{k,l} \Vert^2_{L^2(\Cb_{\ub'})} \di \ub' \\
\lesssim & \int_0^{u} \delta^{-1} E^2_{\leq k}(u')\di u' + \delta \big( \delta I^2_N + \delta^2 M^4 \big),
\end{align*}
where $E^2_{\leq k}(u') = \sup\limits_{\ub} E^2_{\leq k}(u', \ub)$.
\begin{remark}
	If we estimate the above term directly as before, we can only get
	\bes
	\iint_{D_{u, \ub}} \Big| \frac{1}{r}  L \phi_{k,l}\, \Lb \phi_{k,l} \Big| + \Big| \frac{1}{r} \cos^2 \theta \, L \phi_{k,l}\, \Lb \phi_{k,l} \Big|  \lesssim \delta^2 M^2,
	\ees
	which is not good enough to close the bootstrap argument. That is the main reason for us to estimate $\underline{E}$ first.

\end{remark}

In summary, we conclude
\bes
\delta^2 E^2_{\leq N}  ( u, \ub)  \lesssim \delta^2 I^2_N(\theta_0, \theta_1, \phi_0, \phi_1) + \delta^3 M^4 +  \int_0^{u} \delta E^2_{\leq N}(u')\di u' .
\ees
Taking the supremum on $\ub$, we get
\bes
E^2_{\leq N}  ( u)  \lesssim  I^2_N(\theta_0, \theta_1, \phi_0, \phi_1) + \delta M^4 +  \int_0^{u} \delta^{-1} E^2_{\leq N}(u')\di u' .
\ees
Applying the Gr{\"o}nwall inequality yields
\be\label{con-2}
E^2_{\leq N}  ( u, \ub)  \leq E^2_{\leq N}  ( u)  \lesssim  I^2_N(\theta_0, \theta_1, \phi_0, \phi_1) + \delta M^4 .
\ee

\subsection{The continuity argument}\label{3.5}

Now we can prove the global existence of smooth solution in region II by the
continuity argument. Combining \eqref{con-1} and \eqref{con-2},  it holds that
\be\label{con-3}
E_{\leq N}  ( u, \ub) + \Eb_{\leq N}  ( u, \ub) \leq  C \big(I_N(\theta_0, \theta_1, \phi_0, \phi_1) + \delta^{\frac{1}{2}} M^2 \big),
\ee
where $C$ is a positive constant. Let $[0, T]$ be the largest time interval on which the bootstrap assumption \eqref{boostrap} holds. By the local well-posedness for wave equations, we know that $T > 0$. If we choose $M > 4CI_N(\theta_0, \theta_1, \phi_0, \phi_1)$ and $\delta < (4CM)^{-2}$, the inequality \eqref{con-3} becomes
\bes
E_{\leq N}  ( u, \ub) + \Eb_{\leq N}  ( u, \ub) \leq  \frac{1}{2} M.
\ees
Now the solution and the energy estimates can be extended to a larger time
interval $[0, T']$, thus contradicting the maximality of $T$. This implies $T = \infty$ and the solution is global in region II.

\section{Small solution on the slice $C_{\delta}$}\label{sec4}
Recalling the short pulse region II, we possess the a priori
$L^{\infty}$
estimates as detailed in Lemma \ref{Sobolev}. In this section, we aim to refine these estimations. More specifically, we will demonstrate that\begin{proposition}
  Suppose we have bounded $E_N(\delta, \ub)$ and $\Eb_N(\delta, \ub)$ for $N \geq 6$. Then, for all $0 \leq l \leq k \leq N-3$, it holds that
  \begin{align*}
  \big|\po (\p^l \Gamma^{k-l} \theta)  \big|_{L^{\infty}(C_\delta)} \lesssim \delta^{\frac{3}{4}} |\ub|^{-2} , \qquad
  & \big|\po (\p^l \Gamma^{k-l} \phi)  \big|_{L^{\infty}(C_\delta)} \lesssim \delta^{\frac{3}{4}} |\ub|^{-2} , \\
  \big|\Lb (\p^l \Gamma^{k-l} \theta)  \big|_{L^{\infty}(C_\delta)} \lesssim \delta^{\frac{3}{4}} |\ub|^{-1} , \qquad
  & \big|\Lb (\p^l \Gamma^{k-l} \phi)  \big|_{L^{\infty}(C_\delta)} \lesssim \delta^{\frac{3}{4}} |\ub|^{-1} ,
  \end{align*}
  or equivalently,
  \begin{align}\label{improve}
  \big|\po Z^k \theta  \big|_{L^{\infty}(C_\delta)} \lesssim \delta^{\frac{3}{4}} |\ub|^{-2} , \qquad
  & \big|\po Z^k \phi  \big|_{L^{\infty}(C_\delta)} \lesssim \delta^{\frac{3}{4}} |\ub|^{-2} , \nnb\\
  \big|\Lb Z^k \theta  \big|_{L^{\infty}(C_\delta)} \lesssim \delta^{\frac{3}{4}} |\ub|^{-1} , \qquad
  & \big|\Lb Z^k \phi  \big|_{L^{\infty}(C_\delta)} \lesssim \delta^{\frac{3}{4}} |\ub|^{-1}.
  \end{align}
\end{proposition}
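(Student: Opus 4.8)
The plan is to upgrade the a priori pointwise bounds of Lemmas \ref{Sobolev}, \ref{Sob-L} and \ref{decay} along $C_\delta$ by a single induction on the order $k$ (and, for each $k$, on the number of $\p$'s), crucially exploiting that $C_\delta$ is issued from the sphere $S_{\delta,1-\delta}=\Sigma_1\cap C_\delta$, on which—by the support condition on the data—$Z^k\theta$, $Z^k\phi$ and all of their derivatives vanish. On $C_\delta$ the null generator is $L=\p_{\ub}$ and $r=\ub-\delta\sim|\ub|\ge 1-2\delta$, so no factor $r^{-1}$ is ever singular. For $\psi$ equal to one of $Z^k\theta$, $Z^k\phi$ the commuted system \eqref{Eq1} gives $\Box\psi=F_\psi$, and I would rewrite this with the standard identity $\Box\psi=-\tfrac1r L\Lb(r\psi)+\slashed{\Delta}\psi$, where $\slashed{\Delta}$ denotes the induced Laplacian on $S_{u,\ub}$, as the transport equation
\begin{equation*}
L\big(\Lb(r\psi)\big)=-rF_\psi+r\,\slashed{\Delta}\psi
\end{equation*}
along the generators of $C_\delta$.

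First I would treat the bad derivative $\Lb$. Integrating the displayed equation in $\ub$ from $1-\delta$, where $\Lb(r\psi)$ vanishes, and using $\Lb(r\psi)=-\psi+r\Lb\psi$, yields
\begin{equation*}
r\,\Lb\psi=\psi+\int_{1-\delta}^{\ub}\big(-r'F_\psi+r'\slashed{\Delta}\psi\big)\,\di\ub'.
\end{equation*}
On the right, $|\psi|\lesssim\delta|\ub|^{-1}M$ by Lemma \ref{Sob-L}, while $r'\slashed{\Delta}\psi\sim r'\,\slashed{\nabla}^2\psi$ is a two-angular-derivative term, small and $\ub$-integrable by Lemma \ref{decay}; for the nonlinearity $F_\psi$ I would invoke the null and double null structure through \eqref{i1i2}, \eqref{i1i3}, Lemma \ref{double null} and the a priori bounds, which show that $F_\psi$ decays at least like $\delta|\ub|^{-5/2}$ times lower-order factors, so that $r'F_\psi$ is $\ub$-integrable. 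Since $r\sim|\ub|$, the three contributions combine to $|\Lb\psi|_{L^\infty(C_\delta)}\lesssim\delta M^2|\ub|^{-1}\lesssim\delta^{3/4}|\ub|^{-1}$ for $\delta$ small. The same computation applied to each $\p^l\Gamma^{k-l}(\theta,\phi)$—peeling the $\p$'s off one at a time via $\p_t=\tfrac12(L+\Lb)$, $\p_i=\po_i+\omega_i\p_r$ and $[L,\p_i]=\tfrac1r\po_i$, so that the estimation on $C_\delta$ never generates a power of $\delta^{-1}$—upgrades $|\Lb Z^k(\theta,\phi)|_{L^\infty(C_\delta)}$ to $\delta^{3/4}|\ub|^{-1}$, and in particular removes the $\delta^{-l}$ weight present in the interior bound of Lemma \ref{Sob-L}. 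Running the same integration directly on $Z^m(\theta,\phi)$ (whose $L$-derivative is a good derivative, already small) then gives the intermediate bound $|Z^m(\theta,\phi)|_{L^\infty(C_\delta)}\lesssim\delta^{3/4}|\ub|^{-1}$ for all $m\le N-2$.

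For the good derivatives I would not integrate anything new but use the algebraic identities already available: by \eqref{omega3}, $|\slashed{\nabla}Z^k\psi|\sim r^{-1}|\Omega Z^k\psi|\lesssim|\ub|^{-1}|Z^{k+1}\psi|$, and by \eqref{L}, $LZ^k\psi=(t+r)^{-1}(S+\omega^j\Omega_{0j})Z^k\psi$ with $t+r=2\ub$, so $|LZ^k\psi|\lesssim|\ub|^{-1}|Z^{k+1}\psi|$; each identity trades one good derivative for a Lorentz field of order $k+1\le N-2$ divided by $|\ub|$, and feeding in the intermediate bound $|Z^{k+1}(\theta,\phi)|_{L^\infty(C_\delta)}\lesssim\delta^{3/4}|\ub|^{-1}$ yields exactly $|\po Z^k(\theta,\phi)|_{L^\infty(C_\delta)}\lesssim\delta^{3/4}|\ub|^{-2}$, which is \eqref{improve}. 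Organizing everything as one induction on $k$ makes the improved lower-order bounds available when estimating $F_\psi$ at step $k$, and the hypothesis $N\ge 6$ supplies the derivative room the transport step needs (two angular derivatives of $\psi$ and one extra Lorentz field). The main obstacle I expect is the bookkeeping for $F_\psi$: one must check that, after distributing $\p^l\Gamma^{k-l}$ over the quasilinear and double null terms, every resulting product still carries the null/double null structure and decays fast enough for $r'F_\psi$ to be $\ub$-integrable, and that the $\delta$-powers come out so that the products—being at least quadratic, with the improved $C_\delta$ smallness of each factor—reproduce the estimate with a strictly smaller constant, thereby closing the induction.
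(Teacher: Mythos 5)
Your proposal is correct and follows essentially the same route as the paper: an induction on $k$ and on the number of $\p$'s, with the good derivatives handled algebraically by trading $\po$ for $|\ub|^{-1}Z^{k+1}$, and the bad derivative handled by rewriting the commuted wave equation as a transport equation for $\Lb$ along the generators of $C_\delta$, integrated from $S_{\delta,1-\delta}$ where the data vanish, with the quasilinear and (double) null terms absorbed by their decay. The paper packages the transport slightly differently (a Gr\"onwall inequality for $|\ub\,\Lb\Gamma^k\theta|$ rather than your $\Lb(r\psi)$ identity) and, for $l\geq 1$, explicitly passes through intermediate bounds with negative powers of $\delta$ (e.g.\ $|\Lb^2\p\Gamma^{k-1}\theta|\lesssim\delta^{-5/4}|\ub|^{-1}$) that are compensated by the $\delta$-smallness of the coefficients such as $(L\phi)^2$ — this is exactly the bookkeeping you flag at the end as the remaining obstacle.
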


\begin{proof}
 We will establish this proposition through induction, focusing solely on
 $\theta$. For the base case where
$l=0$, the conclusion regarding the good derivatives is immediately derived from the a priori estimates provided in Lemma \ref{Sob-L} and \eqref{po-Omega}, \eqref{L},
  \bes
  |\po \Gamma^k \theta| \lesssim \frac{|\Gamma^{k+1} \theta|}{|\ub|} \lesssim \delta |\ub|^{-2}   \lesssim \delta^{\frac{3}{4}} |\ub|^{-2} .
  \ees

  To demonstrate the improvement for
 $\Lb \Gamma^k \theta$, it is sufficient to focus on this term. To achieve this, we will utilize an ordinary differential equation (ODE) form of equation \eqref{Eq2-1}. The left-hand side of this ODE can be expanded as follows
   \bes
   - L (\Lb \Gamma^k \theta) - \frac{1}{r} \Lb \Gamma^k \theta + \Big(\po^i \po_i  + \frac{1}{r} L \Big) \Gamma^k \theta -  g_1^{\alpha \beta} \p_\alpha \p_\beta \Gamma^k \theta  -h_1^{\alpha \beta} \p_\alpha \p_\beta \Gamma^k \phi.
  \ees

 First, we handle the term $g_1^{\alpha \beta} \p_\alpha \p_\beta \Gamma^k \theta$. According to Lemma \ref{symmetric}, \eqref{metric} and noting that $g_1^{\alpha \beta}$ is symmetric, we have
 \begin{align*}
    & \, g_1^{\alpha \beta} \p_\alpha \p_\beta \Gamma^k \theta \\
   = & \,  \frac{1}{4} (\Lb \phi)^2 \, L^2 \Gamma^k \theta + \frac{1}{2} L \phi \, \Lb \phi \, L \Lb \Gamma^k \theta + \frac{1}{4} (L \phi)^2 \, \Lb^2 \Gamma^k \theta + \po^i \phi \, \po^j \phi \, \po_i \po_j  \Gamma^k \theta - \Lb \phi \, \po^i \phi \, L \po_i \Gamma^k \theta \\
   & \, \, - L \phi \, \po^i \phi \, \Lb \po_i \Gamma^k \theta + \frac{1}{2r} \po^i \phi \, \po_i \phi \, (L-\Lb)\Gamma^k \theta \\
   & \, \, - Q(\phi ,\phi) \big( -L \Lb  \Gamma^k \theta +  \po^i  \po_i \Gamma^k \theta + \frac{1}{r} (L- \Lb) \Gamma^k \theta \big).
 \end{align*}
By combining Lemma \ref{Sobolev} and Lemma \ref{decay}, we identify the most significant term in $g_1^{\alpha \beta} \p_\alpha \p_\beta \Gamma^k \theta$ is $\frac{1}{4} (L \phi)^2 \, \Lb^2 \Gamma^k \theta $, which can be controlled by
\begin{align*}
    \big| \frac{1}{4} (L \phi)^2 \, \Lb^2 \Gamma^k \theta \big|
  \lesssim  \delta |\ub|^{-4} .
\end{align*}
 For the term $h_1^{\alpha \beta} \p_\alpha \p_\beta \Gamma^k \phi$, we define the symmetrization
 \bes
 h^{\alpha \beta} = \frac{1}{2} \big( h_1^{\alpha \beta} + h_1^{\beta \alpha } \big),
 \ees
 then
 \bes
 h_1^{\alpha \beta} \p_\alpha \p_\beta \Gamma^k \phi = h^{\alpha \beta} \p_\alpha \p_\beta \Gamma^k \phi.
 \ees

 Now, the term
  $h_1^{\alpha \beta} \p_\alpha \p_\beta \Gamma^k \phi$ can be bounded similarly as $g_1^{\alpha \beta} \p_\alpha \p_\beta \Gamma^k \theta$. We can also get that
  $\big(\po^i \po_i  + \frac{1}{r} L \big) \Gamma^k \theta$ is bounded by
  $\delta^{\frac{3}{4}} |\ub|^{-\frac{5}{2}} $. By exploiting the favorable structures of the null forms and double null forms, the right-hand side of equation \eqref{Eq2-1} can be bounded by
  $\delta |\ub|^{-\frac{7}{2}} $. Given that
  $\delta$ is chosen to be sufficiently small, equation \eqref{Eq2-1} simplifies to
  \be\label{ODE}
  \Big|  L (\Lb \Gamma^k \theta) + \frac{1}{r} \Lb \Gamma^k \theta \Big| \lesssim \delta^{\frac{3}{4}} |\ub|^{-\frac{5}{2}} .
  \ee
  Multiplying $2\ub^2 \Lb \Gamma^k \theta$ on both sides of \eqref{ODE}, we obtain
  \begin{align*}
  L \big(\ub^2 (\Lb \Gamma^k \theta)^2 \big) \lesssim & \, 2 \Big|\frac{1}{\ub} - \frac{1}{r} \Big| \, \big(\ub^2 (\Lb \Gamma^k \theta)^2 \big) + 2 \delta^{\frac{3}{4}} |\ub|^{-\frac{5}{2}} M \, \ub^2 |\Lb \Gamma^k \theta | \\
  \lesssim &\, \delta |\ub|^{-2} \, \big(\ub^2 (\Lb \Gamma^k \theta)^2 \big) + \delta^{\frac{3}{4}} |\ub|^{-\frac{3}{2}} M \, |\ub \, \Lb \Gamma^k \theta|.
  \end{align*}
  Dividing both sides of the above equation by $|\ub \, \Lb \Gamma^k \theta|$, we have
  \bes
  L |\ub \, \Lb \Gamma^k \theta|
  \lesssim \delta |\ub|^{-2} \, |\ub \, \Lb \Gamma^k \theta|+ \delta^{\frac{3}{4}} |\ub|^{-\frac{3}{2}} .
  \ees
  Integrating in the $\ub= \frac{1}{2} (t+r)$ direction on  $C_{\delta}$ and noting that $\Lb \Gamma^k \theta$ vanishes on $S_{\delta, 1-\delta}$, we have
  \bes
   |\ub \, \Lb \Gamma^k \theta|  \lesssim \int_{1-\delta}^{\ub}  \delta |\ub'|^{-2} \, |\ub' \, \Lb \Gamma^k \theta| \, \di \ub' + \delta^{\frac{3}{4}} .
  \ees
  An application of the Gr{\"o}nwall inequality yields
  \bes
  |\Lb \Gamma^k \theta| \lesssim \delta^{\frac{3}{4}} |\ub|^{-1} .
  \ees

  When $l = 1$, taking $Z^k = \p \Gamma^{k-1}$ in \eqref{Eq2-1}. Combining the situation $l = 0$ and Lemma \ref{decay}, we have
  \bes
  |\po \p \Gamma^{k-1} \theta| \lesssim \delta^{\frac{3}{4}} |\ub|^{-2},
  \ees
  and
  \be\label{Lb2}
  |\Lb \p \Gamma^{k-1} \theta| \lesssim \delta^{-\frac{1}{4}} |\ub|^{-1}.
  \ee
  From \eqref{Lb2} and Lemma \ref{decay},  we can further deduce $| \Lb^2 \p \Gamma^{k-1} \theta | \lesssim \delta^{-\frac{5}{4}} |\ub|^{-1} $.

  To get the smallness of $\Lb \p \Gamma^{k-1} \theta$, we  will still use the ODE form of \eqref{Eq2-1}. The worst term
  in the right-hand side of \eqref{Eq2-1} comes from the double null form, which can be controlled as
  \begin{align*}
      & \, \sum_{\substack{|i_1| + |i_2| + |i_3| \leq |k| \\ |i_1| \geq 1 } } \Big| Q^{\alpha \beta} \big(\p \Gamma^{i_1-1}\phi, Q_{\alpha \beta} ( \Gamma^{i_2}\theta,  \Gamma^{i_3} \phi) \big) \Big| \\
      & \, + \sum_{\substack{|i_1| + |i_2| + |i_3| \leq |k| \\ 1 \leq |i_2| < |k| } } \Big| Q^{\alpha \beta} \big(\Gamma^{i_1}\phi, Q_{\alpha \beta} ( \p \Gamma^{i_2-1}\theta,  \Gamma^{i_3} \phi) \big) \Big| \\
     \lesssim & \, \sum_{\substack{|i_1| + |i_2| + |i_3| \leq |k| \\ |i_1| \geq 1 } } \big| L \p \Gamma^{i_1-1}\phi \big| \, \big| \Lb^2  \Gamma^{i_2} \theta  \big| \, \big| L \Gamma^{i_3}\phi \big|
      + \sum_{\substack{|i_1| + |i_2| + |i_3| \leq |k| \\ 1 \leq |i_2| < |k| } }  \big| L \Gamma^{i_1}\phi \big| \, \big| \Lb^2 \p \Gamma^{i_2-1} \theta  \big| \, \big| L \Gamma^{i_3}\phi \big| \\
      \lesssim & \, \delta^{\frac{3}{4}} |\ub|^{-2}  \cdot \delta^{-\frac{1}{4}} |\ub|^{-1} \cdot \delta^{\frac{3}{4}} |\ub|^{-2} +  \delta |\ub|^{-\frac{3}{2}}  \cdot \delta^{-\frac{5}{4}} |\ub|^{-1}   \cdot \delta |\ub|^{-\frac{3}{2}}
      \lesssim  \delta^{\frac{3}{4}} |\ub|^{-4}.
  \end{align*}

  The term $\big(\po^i \po_i  + \frac{1}{r} L \big) \p \Gamma^{k-1} \theta$ contains the good derivatives and can be controlled by $\delta^{\frac{3}{4}} |\ub|^{-3} $. As above, one of the worst term in  $g_1^{\alpha \beta} \p_\alpha \p_\beta \Gamma^k \theta$ is still $\frac{1}{4} (L \phi)^2 \, \Lb^2 \p \Gamma^{k-1} \theta $, we can control it by
  \bes
  \Big|\frac{1}{4} (L \phi)^2 \, \Lb^2 \p \Gamma^{k-1} \theta \Big| \lesssim  \delta^2 |\ub|^{-3}  \cdot  \delta^{-\frac{5}{4}} |\ub|^{-1}  \lesssim \delta^{\frac{3}{4}} |\ub|^{-4}.
  \ees
  Then the ODE becomes
  \bes
  \Big|  L (\Lb \p \Gamma^{k-1} \theta) + \frac{1}{r} (\Lb \p \Gamma^{k-1} \theta) \Big| \lesssim \delta^{\frac{3}{4}} |\ub|^{-3} .
  \ees
  The same derivation as above gives that
  \bes
  |\Lb \p \Gamma^{k-1} \theta| \lesssim \delta^{\frac{3}{4}} |\ub|^{-1}.
  \ees

  The conclusions for higher-order derivatives on the null cone
 $C_{\delta}$ are derived through induction.
\end{proof}

\section{Globally smooth solution in the small data region I}\label{sec5}

In this section, we will explore the behavior of the solution in region I, which is bounded by
 $C_\delta$ and $\Sigma_1$.

Denote
\bes
D_I := \big\{ (t,x) \big| u \geq \delta, t \geq 1 \big\}, \qquad D_{I,t} := \big\{ (t',x) \big| u \geq \delta, 1 \leq t' \leq t \big\},
\ees
and
\bes
\Sigma_t = \big\{ (t,x) \big| u \geq \delta, t \geq 1 \big\} \cap D_I.
\ees
Given a point $(t,x) \in \Sigma_t$, we use $(t, B(t, x)) $ to denote the points on $C_\delta$.

The following lemma is the Klainerman-Sobolev inequality \cite{W-W}.

\begin{lemma}
  (Klainerman-Sobolev inequality) For any $\psi \in C^\infty( \mathbb R^{1+n} )$ and $t>1$ with $(t, x) \in D_I$, we have
  \bes
  |\psi (t, x) | \lesssim \frac{1}{(1+ |u|)^{\frac{1}{2}} }| \psi (t, B(t,x))| + \frac{1}{(1+ |\ub|) (1+ |u|)^{\frac{1}{2}}}\, \sum_{m \leq 3} \Vert Z^m \psi \Vert_{L^2(\Sigma_t)}.
  \ees
\end{lemma}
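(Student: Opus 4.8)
The plan is to prove this as the Klainerman--Sobolev inequality adapted to the region $D_I$, whose only non-standard feature is the null boundary $C_\delta$: since $\psi$ lives only on $D_I$ and not on all of $\Sigma_t$, the classical estimate must acquire a boundary contribution, and this contribution is exactly $\tfrac{1}{(1+|u|)^{1/2}}|\psi(t,B(t,x))|$. The mechanism producing it is a weighted one-dimensional identity in the radial direction on $\Sigma_t$, integrated from the interior point $(t,x)$ out to the sphere $C_\delta\cap\Sigma_t$; the weights $1+|u|$ and $1+|\ub|$ are then manufactured, as usual, by trading translations for the Lorentz fields $S,\Omega_{0j},\Omega_{ij}$ of $\mathcal{Z}$.

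First I would fix $(t,x)\in D_I$, write $x=r\omega$ with $r=|x|$, and set $r_*:=t-2\delta$, so that $B(t,x)=r_*\omega$ and, throughout $D_I$, $1+|u|\sim 1+t-r$, $1+|\ub|\sim t+r\sim t$, and $0\le r<r_*$. I split into two regimes. In the main regime $r\ge c\,t$ (for a fixed small $c$), the core step is the weighted fundamental theorem of calculus: with $\varpi(\rho):=1+t-\rho$ one has $\tfrac{\di}{\di\rho}\bigl(\varpi\,|f|^2\bigr)=-|f|^2+\varpi\,\tfrac{\di}{\di\rho}|f|^2$, so integrating along the ray $\rho\mapsto(t,\rho\omega)$ from $\rho=r$ to $\rho=r_*$, using $\varpi(r)\sim 1+|u|$ and $\varpi(r_*)=1+2\delta\sim 1$, and applying Cauchy--Schwarz gives
\[
(1+|u|)\,|f(t,r\omega)|^2\lesssim |f(t,r_*\omega)|^2+\int_r^{r_*}\Bigl(|f|^2+\varpi^2\,|\p_r f|^2\Bigr)(t,\rho\omega)\,\di\rho .
\]
Applying this with $f=\psi$ already produces the clean boundary term $|\psi(t,B(t,x))|^2$; to upgrade the ray-integral on the right to a volume integral over $\Sigma_t$ I would take the supremum over the angular variable by the Sobolev embedding $H^2(\mathbb{S}^2)\hookrightarrow L^\infty(\mathbb{S}^2)$, rewriting each sphere derivative through $r\,\slashed{\nabla}\sim\Omega$ via \eqref{omega3} (the rotations $\Omega_{ij}$ commute with $\p_r$). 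This reduces everything to controlling $\sum_{|a|\le 2}\int_{\{r<|y|<r_*\}}\bigl(|\Omega^a\psi|^2+\varpi^2|\Omega^a\p_r\psi|^2\bigr)|y|^{-2}\,\di y$ on $\Sigma_t$.

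On $\{r\le |y|<r_*\}$ with $r\ge c\,t$ one has $|y|\sim t\sim 1+|\ub|$, so the factor $|y|^{-2}$ supplies precisely the missing $(1+|\ub|)^{-2}$. The remaining point is the pointwise bound $\varpi\,|\Omega^a\p_r\psi|\lesssim\sum_{m\le 3}|Z^m\psi|$: writing $\p_r=\tfrac12(L-\Lb)$, using $(t+r)L=S+\omega^j\Omega_{0j}$ from \eqref{L} and the companion identity $(t-r)\Lb=S-\omega^j\Omega_{0j}$, together with $\Omega_{ij}\varpi=0$ and $[\Omega_{ij},\p_r]=[\Omega_{ij},L]=[\Omega_{ij},\Lb]=0$, one gets $\varpi\,|\Omega^a\p_r\psi|\lesssim\sum_{|b|\le|a|+1}|Z^b\psi|$. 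Collecting, the bulk integral is $\lesssim (1+|\ub|)^{-2}\sum_{m\le3}\|Z^m\psi\|_{L^2(\Sigma_t)}^2$, and dividing by $1+|u|$ yields the claim in this regime. In the complementary regime $r<c\,t$ the $2$-spheres degenerate, but there $1+|u|\sim 1+|\ub|\sim t$, a Euclidean ball of radius $\sim t$ about $x$ lies inside $D_I\cap\Sigma_t$, and ordinary $\mathbb{R}^3$ Sobolev together with $t^{|a|}|\p^a\psi|\lesssim\sum_{|b|\le|a|}|Z^b\psi|$ on $\{|y|\lesssim t\}$ gives $|\psi(t,x)|\lesssim t^{-3/2}\sum_{m\le3}\|Z^m\psi\|_{L^2(\Sigma_t)}$, which is dominated by the second term of the asserted inequality (the boundary term being simply discarded here).

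I expect the main obstacle to be the pointwise estimate $\varpi\,|\p_r\psi|\lesssim \sum_{m\le 3}|Z^m\psi|$ in the first regime: the ``bad'' derivative $\Lb\psi$ carries no pointwise decay, so the weight $\varpi\sim 1+|u|$ in front of it cannot be absorbed naively, and one must genuinely use the algebraic identity $(t-r)\Lb=S-\omega^j\Omega_{0j}$ (and \eqref{L} for $L$) — the single place where the full rotation/boost/scaling structure of $\mathcal{Z}$ is needed. Everything else is bookkeeping: the Sobolev embedding on the $2$-spheres, a partition of unity across the transitional region $r\sim c\,t$, and, for $t$ close to $1$, a small Euclidean ball near $\Sigma_1$; none of these raises a conceptual difficulty.
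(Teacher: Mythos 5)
Your proposal is correct. The paper does not actually prove this lemma---it only cites \cite{W-W}---and the argument you give (a weighted radial integration with weight $1+t-\rho$ from the point $(t,x)$ out to the sphere $C_\delta\cap\Sigma_t$, which produces the pointwise boundary term; Sobolev embedding on $\mathbb{S}^2$ via the rotations to convert the ray integral into a volume integral carrying the factor $|y|^{-2}\sim(1+|\ub|)^{-2}$; the identities $(t+r)L=S+\omega^j\Omega_{0j}$ and $(t-r)\Lb=S-\omega^j\Omega_{0j}$ to absorb the weight on $\p_r\psi$; and the standard interior Klainerman--Sobolev estimate in the region $r<ct$) is precisely the standard proof used in that reference, so there is nothing to add.
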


Combining \eqref{improve}, we can get the pointwise decay estimates in region I
  \begin{align}\label{pointdecay}
  \big|\p Z^k \theta \big| \lesssim & \frac{1}{(1+ |\ub|)(1+|u|)^{\frac{1}{2}} } \, \delta^{\frac{3}{4}} +   \frac{1}{(1+ |\ub|) (1+ |u|)^{\frac{1}{2}}}\,  \sum_{m \leq 3} \Vert \p Z^{k+m}  \theta \Vert_{L^2(\Sigma_t)} \nnb \\
   \lesssim & (1+ |\ub|)^{-1} \, (1+|u|)^{-\frac{1}{2}} \, \delta^{\frac{3}{4}}  M , \qquad k \leq N-3,
  \end{align}
  here $M$ is a large constant, which is to be determined later and only depends on the initial data.

Next lemma is on the Hardy inequality in region I, whose proof is similar to Lemma \ref{Sob-L}.
\begin{lemma}\label{Sob-L-I}
	For $0  \leq k \leq N - 3$, it holds that
	\be
\big|Z^k \theta \big| \lesssim (1+ |\ub|)^{-1} \, (1+|u|)^{\frac{1}{2}} \, \delta^{\frac{3}{4}}  M,
 \quad \big|Z^k\phi\big|\lesssim (1+ |\ub|)^{-1} \, (1+|u|)^{\frac{1}{2}} \, \delta^{\frac{3}{4}}  M,
	\ee
	where $M$ is a positive constant.
\end{lemma}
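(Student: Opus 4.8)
The plan is to follow the strategy of Lemma~\ref{Sob-L}: to recover $Z^{k}\theta$ (and likewise $Z^{k}\phi$) we integrate the bad derivative $\Lb Z^{k}\theta$ along the incoming null geodesics of $\Lb=\p_{t}-\p_{r}$, starting from the boundary $\p D_{I}=C_{\delta}\cup(\Sigma_{1}\cap D_{I})$. The two inputs are the pointwise bound for $\Lb Z^{k}\theta$ supplied by \eqref{pointdecay}, together with control of $Z^{k}\theta$ on $\p D_{I}$.

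First I would fix the geometry. For $P\in D_{I}$ with null coordinates $(u_{*},\ub_{*})$ and angle $\omega_{*}$, the integral curve of $\Lb$ through $P$ keeps $\ub=\ub_{*}$ and $\omega=\omega_{*}$ fixed while $u=t-\ub_{*}$ decreases with $t$; traced backwards it first meets $\p D_{I}$ either on $C_{\delta}$ at $u=\delta$ (when $\ub_{*}\ge1-\delta$) or on $\Sigma_{1}$ at $u=1-\ub_{*}$, where $r=2\ub_{*}-1\le1-2\delta$ (when $\ub_{*}<1-\delta$). Since $\Sigma_{1}\cap D_{I}$ is contained in $B_{1-2\delta}$, where the data vanishes, repeatedly differentiating \eqref{Eq} at such a point (to convert $\p_{t}^{2}$ and higher time derivatives into spatial ones, which vanish) shows $Z^{k}\theta=Z^{k}\phi=0$ there; so if the ray ends on $\Sigma_{1}$ the boundary term is zero.

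Next I would bound $Z^{k}\theta$ on $C_{\delta}$. Commuting, $Z^{k}\theta$ is a constant-coefficient sum of terms $\p^{a}\Gamma^{b}\theta$ with $a+b\le k$. The $a=0$ contribution $\Gamma^{k}\theta$ is estimated by Lemma~\ref{Sob-L}, giving $|\Gamma^{k}\theta|_{L^{\infty}(C_{\delta})}\lesssim\delta|\ub|^{-1}M$. Each term with $a\ge1$ is of the form $\p(Z^{a+b-1}\theta)$ with $a+b-1\le N-3$, so, since every $\p_{\mu}$ is a bounded combination of $L,\Lb,\slashed{\nabla}$, \eqref{improve} yields $|\p^{a}\Gamma^{b}\theta|_{L^{\infty}(C_{\delta})}\lesssim\delta^{\frac34}|\ub|^{-2}+\delta^{\frac34}|\ub|^{-1}\lesssim\delta^{\frac34}|\ub|^{-1}$. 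Hence $|Z^{k}\theta|_{L^{\infty}(C_{\delta})}\lesssim\delta^{\frac34}|\ub|^{-1}M$, and because $\ub$ is constant along the $\Lb$-ray this contributes at most $\delta^{\frac34}(1+|\ub_{*}|)^{-1}M$ at $P$.

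Finally I would run the integration. Parametrising the ray by $t$ we have $\tfrac{d}{dt}(Z^{k}\theta\circ\gamma)=\Lb Z^{k}\theta$ and $du=dt$, so
\[
Z^{k}\theta(P)=Z^{k}\theta(P_{0})+\int(\Lb Z^{k}\theta)\,du ,
\]
with $P_{0}$ the boundary point. By \eqref{pointdecay}, $|\Lb Z^{k}\theta|\lesssim|\p Z^{k}\theta|\lesssim(1+|\ub|)^{-1}(1+|u|)^{-1/2}\delta^{\frac34}M$ for $k\le N-3$; since $\ub=\ub_{*}$ is fixed and $u$ runs over a subinterval of $[\delta,u_{*}]$, the integral is $\lesssim(1+|\ub_{*}|)^{-1}\delta^{\frac34}M\int_{\delta}^{u_{*}}(1+u)^{-1/2}\,du\lesssim(1+|\ub_{*}|)^{-1}(1+|u_{*}|)^{1/2}\delta^{\frac34}M$. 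Adding the boundary contribution (zero, or $\lesssim\delta^{\frac34}(1+|\ub_{*}|)^{-1}(1+|u_{*}|)^{1/2}M$) gives the stated bound, and $\phi$ is handled identically. The only delicate step is the $C_{\delta}$-boundary estimate on $Z^{k}\theta$ itself: it forces us to combine the raw region-II bound of Lemma~\ref{Sob-L} (sharp for the pure rotation/boost/scaling part, and with an extra power of $\delta$) with the refined null-cone estimates \eqref{improve} (which supply the $\delta^{3/4}$-smallness for every piece carrying a translation derivative), and to observe that the $|\ub|^{-1}$ decay is transported unchanged along the incoming characteristics. The remainder is the same one-dimensional integration as in Lemma~\ref{Sob-L}.
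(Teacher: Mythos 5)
Your proposal is correct and follows exactly the route the paper intends (the paper omits the argument, stating only that it is ``similar to Lemma \ref{Sob-L}''): you integrate $\Lb Z^k\theta$ along the incoming null rays $\ub=\mathrm{const}$ from the boundary $C_\delta\cup(\Sigma_1\cap D_I)$, using the pointwise bound \eqref{pointdecay} for the integrand and \eqref{improve} together with Lemma \ref{Sob-L} for the boundary values, the unbounded $u$-range producing the $(1+|u|)^{1/2}$ factor noted in the remark following the lemma. No gaps.
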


\begin{remark}
	The Hardy inequality presented in Lemma \ref{Sob-L-I} within region I differs from the one stated in Lemma \ref{Sob-L}, primarily because the variable $u$
	is bounded in region II, which affect the form of the inequality.
	\end{remark}

We define the homogeneous and non-homogeneous energies on the constant $t$ slice $\Sigma_t$ as
\begin{gather*}
E_k(t) = \Big( \int_{\Sigma_t} (\p_t Z^k \theta )^2 + |\nabla  Z^k \theta |^2 \di x \Big)^{\frac{1}{2}} + \Big( \int_{\Sigma_t} (\p_t Z^k \phi )^2 + |\nabla  Z^k \phi |^2 \di x \Big)^{\frac{1}{2}}, \\   E_{\leq k} (t) = \sum_{0 \leq j \leq k} E_j(t).
\end{gather*}
Noting that
\bes
(\p_t \psi )^2 + |\nabla  \psi |^2  = \frac{1}{2} \big( ( L \psi )^2 + ( \Lb \psi )^2 \big) + |\slashed{\nabla} \psi |^2 = \frac{1}{2} \big( ( L \psi )^2 + ( \Lb \psi )^2 \big) + \sum_{1\leq i \leq 3} |\po_i \psi|^2.
\ees
The above defined energies are equivalent to
\begin{gather*}
	E_k(t) = \Big( \int_{\Sigma_t} \frac{1}{2}\left((L Z^k \theta )^2+(\Lb Z^k \theta)^2\right) + |\slashed{\nabla}  Z^k \theta |^2 \di x \Big)^{\frac{1}{2}} \\\quad\quad\qquad+ \Big(\int_{\Sigma_t} \frac{1}{2}\left((L Z^k \phi )^2+(\Lb   Z^k\phi)^2\right) + |\slashed{\nabla}  Z^k \phi |^2  \di x \Big)^{\frac{1}{2}}, \\   E_{\leq k} (t) = \sum_{0 \leq j \leq k} E_j(t).
\end{gather*}

We now formulate the global existence of small solution in region I.

\begin{theorem}
  Assume the solution to the equation \eqref{Eq} satisfies \eqref{improve} on $C_\delta$, then there exists a uniquely global solution to the equation \eqref{Eq}  in region I. Moreover, the solution satisfies the following energy estimate
  \be
  E_{\leq N-3} (t) \lesssim \delta^{\frac{3}{4}}  I_N(\theta_0, \theta_1, \phi_0, \phi_1),
  \ee
  for $t \geq 1$, $N \geq 6$ and $I_N(\theta_0, \theta_1, \phi_0, \phi_1)  $ depends on the initial data up to $N+1$ order of derivatives.
\end{theorem}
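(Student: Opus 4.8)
The plan is to run a continuity (bootstrap) argument in region I, as in the strategy section: fix a large constant $M$ depending only on $I_N$, let $[1,T]$ be the maximal interval on which $E_{\leq N-3}(t)\leq \delta^{3/4}M$, and close the estimate by improving this bound. Two structural facts make this feasible. First, by finite speed of propagation the past of $D_{I,t}$ is contained in $C_\delta\cup(\Sigma_1\cap D_I)$, and $\Sigma_1\cap D_I\subset B_{1-2\delta}$ where the data vanishes; hence $E_{\leq N-3}(1)=0$ and the whole solution in region I is generated through the characteristic data on $C_\delta$, on which the improved bounds \eqref{improve} hold, with incoming flux
\[
\int_{C_\delta}|\po Z^k\theta|^2+|\po Z^k\phi|^2 \ \lesssim\ \delta^{3/2}\int_{1-\delta}^{\infty}|\ub|^{-4}\,|\ub|^2\,\di\ub\ \lesssim\ \delta^{3/2}I_N^2 ,
\]
the $I_N$-dependence being inherited from the region~II bounds behind \eqref{improve}. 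Second, under the bootstrap assumption the Klainerman--Sobolev inequality gives the pointwise decay \eqref{pointdecay}, and Lemma \ref{Sob-L-I} gives $|Z^k\theta|,|Z^k\phi|\lesssim(1+|\ub|)^{-1}(1+|u|)^{1/2}\delta^{3/4}M$ in the relevant low ranges of $k$.

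For the energy, I would commute \eqref{Eq} with $Z^k$, $k\leq N-3$ (equation \eqref{Eq1}), symmetrize by multiplying the $\phi$-equation by $\cos^2\theta$ as in \eqref{Eq2-1}--\eqref{Eq2-2}, and use $\p_t$ as multiplier, integrating over the slab $D_{I,t}$. In region~I the coefficients $g_1,g_2,h_1,h_2$ in \eqref{metric} are quadratic in first derivatives, so \eqref{pointdecay} gives $|g_i|+|h_i|\lesssim\delta^{3/2}(1+|\ub|)^{-2}(1+|u|)^{-1}$ and, after exposing a good derivative via Lemmas \ref{symmetric} and \ref{pp}, $\|\p g_i\|_{L^\infty(\Sigma_t)}+\|\p h_i\|_{L^\infty(\Sigma_t)}\lesssim\delta^{3/2}(1+t)^{-2}M^2$. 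I would therefore work with the modified energy obtained by adding the small correction $-\int_{\Sigma_t}(g_1^{\alpha\beta}\p_\alpha Z^k\theta\,\p_\beta Z^k\theta+g_2^{\alpha\beta}\p_\alpha Z^k\phi\,\p_\beta Z^k\phi+2h^{\alpha\beta}\p_\alpha Z^k\theta\,\p_\beta Z^k\phi)$, with $h^{\alpha\beta}=\tfrac12(h_1^{\alpha\beta}+h_1^{\beta\alpha})$, to $E_k(t)^2$: by smallness it is equivalent to $E_k(t)^2$, its time derivative absorbs every top-order quasilinear term, the $\theta$--$\phi$ cross terms of order $k+2$ cancel because $h_1^{\alpha\beta}=h_2^{\beta\alpha}$, and the leftover is $\lesssim(\|\p g_i\|_{L^\infty}+\|\p h_i\|_{L^\infty})E_{\leq k}(t)^2$, which is integrable in $t$.

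For the semilinear right-hand side of \eqref{Eq1}, every term carries a null form $Q$, a product $Q^{\alpha\beta}Q_{\alpha\beta}$, or a double null form $Q^{\alpha\beta}(\cdot,Q_{\alpha\beta}(\cdot,\cdot))$, times factors of $\theta,\phi$ or their derivatives. I would distribute the $Z$'s so that at most one factor exceeds order $\lfloor(N-3)/2\rfloor$, put that one in $L^2(\Sigma_t)$ (controlled by $E_{\leq N-3}(t)$) and the rest in $L^\infty$ via \eqref{pointdecay} and Lemma \ref{Sob-L-I}; Lemmas \ref{null form} and \ref{double null}, combined with $|L\psi|+|\slashed{\nabla}\psi|\lesssim(1+t)^{-1}|Z\psi|$, turn each null form into an extra $(1+t)^{-1}$ (two for double null forms), which overcomes the slow $(1+|u|)^{1/2}$ growth of the undifferentiated $\theta,\phi$ and yields an $L^2(\Sigma_t)$ bound of size $\lesssim\delta^{3/2}(1+t)^{-3/2}M\,E_{\leq N-3}(t)$ plus faster-decaying lower-order contributions. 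Inserting this together with the $C_\delta$-flux bound into the modified energy identity gives $E_{\leq N-3}(t)^2\lesssim\delta^{3/2}I_N^2+\int_1^t\Phi(\tau)E_{\leq N-3}(\tau)^2\,\di\tau$ with $\int_1^\infty\Phi<\infty$; Grönwall then produces $E_{\leq N-3}(t)\leq C(M)\delta^{3/4}I_N$, which for $M$ large (in terms of $I_N$) and $\delta$ small improves the bootstrap to $E_{\leq N-3}(t)\leq\tfrac12\delta^{3/4}M$. Hence $T=\infty$, the solution is global in region~I, and the stated bound holds; uniqueness follows from the standard energy estimate for the difference of two solutions.

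The main obstacle I anticipate is the simultaneous control of the top-order quasilinear terms $g_i^{\alpha\beta}\p_\alpha\p_\beta Z^k(\cdot)$ and $h_i^{\alpha\beta}\p_\alpha\p_\beta Z^k(\cdot)$ without derivative loss — which forces the use of the modified (perturbed-metric) energy and the symmetry $h_1^{\alpha\beta}=h_2^{\beta\alpha}$ — together with the bookkeeping needed to confirm that, once the (double) null structure of every nonlinear term has been used, all error terms decay at least like $(1+t)^{-1-\varepsilon}$ in $L^2(\Sigma_t)$, notwithstanding the $(1+|u|)^{1/2}$ growth of the undifferentiated fields and the derivative losses incurred in Klainerman--Sobolev.
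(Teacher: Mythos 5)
Your proposal is correct and follows essentially the same route as the paper: a bootstrap on $E_{\leq N-3}(t)\leq\delta^{3/4}M$, pointwise decay from Klainerman--Sobolev and the Hardy-type Lemma \ref{Sob-L-I}, the $\cos^2\theta$-symmetrization with $\p_t$ as multiplier integrated over $D_{I,t}$, the null/double-null structure to gain the needed $t$-decay, and the $\delta^{3/2}$ flux through $C_\delta$ from \eqref{improve} as the effective data. Your ``modified energy'' correction for the quasilinear terms is just a repackaging of the paper's integration by parts (whose boundary contributions are absorbed into the equivalent energy $\Eo^2_k$ and whose bulk remainders are the $Q_k^{(i)}$), relying on the same symmetry $h_1^{\alpha\beta}=h_2^{\beta\alpha}$ to cancel the order-$(k+2)$ cross terms.
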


\begin{remark}
	Since the smallness of the solution in the sense of $L^{\infty}$ defined on the slice $C_\delta$ depends on the energies obtained in region II. As the new data, we consider the small solution in region I, we will lose three derivatives in the energy $E_{N-3}$.
\end{remark}

\begin{proof}
  The proof is also based on the bootstrap argument. Assume the solution exists up to time $t$ and for all $1 \leq t' \leq t$, and there is a large constant $M$, which is to be determined later and only depends on the initial data , such that
  \bes
  E_{\leq N-3} (t') \leq  \delta^{\frac{3}{4}} M.
  \ees
  Recall the pointwise decay estimates in region I is \eqref{pointdecay},
  \begin{align*}
  \big|\p Z^k \theta \big| \lesssim  (1+ |\ub|)^{-1} \, (1+|u|)^{-\frac{1}{2}} \, \delta^{\frac{3}{4}} M, \qquad k \leq N-3.
  \end{align*}
  By Lemma \ref{Sob-L-I}, we have  \bes
  \big|Z^k \theta \big| \lesssim (1+ |\ub|)^{-1} \, (1+|u|)^{\frac{1}{2}} \, \delta^{\frac{3}{4}}  M, \qquad k \leq N-3.
  \ees

   For good derivatives, we have
  \bes
  |\po Z^k \theta| \lesssim \frac{1}{1+t+r} |Z^{k+1} \theta|  \lesssim (1+ |\ub|)^{-2} \, (1+|u|)^{\frac{1}{2}} \, \delta^{\frac{3}{4}}  M, \qquad k \leq N-4.
  \ees
  The same estimates hold for $\phi$ as well.

  We take $\p_t = \frac{1}{2} (L+\Lb) $ as the multiplier,
  and multiplying $\p_t Z^k \theta$ and $\p_t Z^k \phi$ on both sides of \eqref{Eq2-1} and \eqref{Eq2-2} respectively, and integrating by parts over the domain $D_{I,t}$. By analogous calculations to those in Section \ref{3.3}, we get
  \begin{align}
  &\iint_{D_{I, t}} \big(\Box Z^k \theta - g_1^{\alpha \beta} \p_\alpha  \p_\beta Z^k \theta  -h_1^{\alpha \beta} \p_\alpha \p_\beta Z^k \phi \big) \cdot \p_t Z^k \theta  \nnb \\
  & \qquad \qquad + \big(\cos^2 \theta \Box Z^k \phi - g_2^{\alpha \beta} \p_\alpha \p_\beta Z^k \phi -h_2^{\alpha \beta} \p_\alpha \p_\beta Z^k \theta \big) \cdot \p_t Z^k \phi \nnb \\
  = & \, \Eo^2_{k}(\theta, \phi, \p_t) -  \Eo^{2}_{k, u= \delta}(\theta, \phi, \p_t) \nnb \\
   &  - \iint_{D_{I, t}} \frac{1}{2} \big( K^{ L} [Z^k \theta]+ K^{ \Lb}[Z^k \theta] + K^{\cos^2 \theta L} [Z^k \phi]+ K^{ \cos^2 \theta \Lb}[Z^k \phi] \big) \nnb \\
   & \qquad \qquad - \big( Q_k^{(1)} + Q_k^{(2)} + Q_k^{(3)} +Q_k^{(4)}  \big), \label{int}
  \end{align}
  where
  \begin{align*}
  Q_{k}^{(1)} = & \, \frac{1}{2} \p_t g_1^{\alpha \beta} \, \p_\alpha Z^k \theta \, \p_\beta Z^k \theta - \p_\alpha g_1^{\alpha \beta} \, \p_\beta Z^k \theta \, \p_t Z^k \theta ,\\
  Q_{k}^{(2)} = & \, \p_t h_1^{\alpha \beta} \, \p_\alpha Z^k \theta \, \p_\beta Z^k \phi - \p_\alpha h_1^{\alpha \beta} \, \p_\beta Z^k \phi \, \p_t Z^k \theta ,
   \\
  Q_{k}^{(3)} = & \, \frac{1}{2} \p_t g_2^{\alpha \beta} \, \p_\alpha Z^k \phi \, \p_\beta Z^k \phi - \p_\alpha g_2^{\alpha \beta} \, \p_\beta Z^k \phi \, \p_t Z^k \phi , \\
  Q_{k}^{(4)} = & \,- \p_\alpha h_2^{\alpha \beta} \, \p_\beta Z^k \theta \, \p_t Z^k \phi.
  \end{align*}
  Furthermore,
  \begin{align*}
  \Eo^2_{k}(\theta, \phi, \p_t) \sim & \, \frac{1}{4}\int_{\Sigma_{t}} \big( ( L Z^k \theta)^2 + ( \Lb Z^k \theta)^2 + 2 |\slashed{\nabla}  Z^k \theta|^2 \big) + \big( ( L Z^k \phi)^2 + ( \Lb Z^k \phi)^2 + 2 |\slashed{\nabla}  Z^k \phi|^2 \big) \\
  = & \, \frac{1}{2}\int_{\Sigma_{t}} \big( (\p_t  Z^k \theta )^2 + |\nabla  Z^k \theta |^2 \big) + \big( (\p_t  Z^k \phi )^2 + |\nabla  Z^k \phi |^2 \big),
  \end{align*}
  \begin{align*}
  \Eo^{2}_{k, u= \delta}(\theta, \phi, \p_t) \sim  \int_{C_{\delta}} \big( |L  Z^k \theta|^2 +  |\slashed{\nabla}  Z^k \theta|^2 \big) + \big( |L  Z^k \phi|^2 +  |\slashed{\nabla}  Z^k \phi|^2 \big) + ( L\phi \Lb Z^k \theta - L \theta \Lb Z^k \phi )^2.
  \end{align*}

 We now focus on the first term on the left-hand side of equation \eqref{int}. In line with \eqref{Eq2-1}, we start by examining the first term on the left-hand side of \eqref{Eq2-1}, which is
  \be\label{worst}
  \iint_{D_{I, t}} \Big| \sum_{\substack{|i_1| + \cdots + |i_m| \leq |k| \\ 2 \leq m \leq |k|+2}} a_{i_1, \cdots, i_m} \, (\sin (2\theta))^{(m-2)} Q(Z^{i_1} \phi, Z^{i_2} \phi) Z^{i_3} \theta \cdots Z^{i_m} \theta \cdot \p_t Z^k \theta \Big|.
  \ee
  Since for any null form, we have (see Lemma \ref{null form})
  \bes
   | \mathcal{Q} (\xi, \chi) | \lesssim | \po \xi| \, | \p \chi| + | \p \xi| \, | \po \chi|,
  \ees
  then we can control the worst term in \eqref{worst} as
  \begin{align*}
  & \iint_{D_{I, t}}\sum_{\substack{|i_1|, |i_2| < |k/2| \\  |i_3| \leq |k| \\ m=3 }} \Big| Q(Z^{i_1} \phi, Z^{i_2} \phi) Z^{i_3} \theta  \cdot \p_t Z^k \theta \Big| + \sum_{\substack{ |k/2| \leq |i_1| \leq |k|\\  |i_2|, |i_3| \leq |k/2|\\ m=3 }} \Big| Q(Z^{i_1} \phi, Z^{i_2} \phi) Z^{i_3} \theta  \cdot \p_t Z^k \theta \Big| \\
  \lesssim & \iint_{D_{I, t}}\sum_{\substack{|i_1|, |i_2| < |k/2| \\  |i_3| \leq |k| }}  \big( | \po Z^{i_1}\phi| \, | \p  Z^{i_2} \phi| + | \p Z^{i_1}\phi| \, | \po  Z^{i_2} \phi| \big) (1+ |u|) |\p Z^{i_3-1} \theta | \, |  \p_t Z^k \theta | \\
  & \qquad + \sum_{\substack{ |k/2| \leq |i_1| \leq |k|\\  |i_2|, |i_3| \leq |k/2|}}  \big( | \po Z^{i_1}\phi| \, | \p  Z^{i_2} \phi| + | \p Z^{i_1}\phi| \, | \po  Z^{i_2} \phi| \big)  | Z^{i_3} \theta | \, |  \p_t Z^k \theta | \\
   \lesssim &  \iint_{D_{I, t}} (1+ |\ub|)^{-2}   (1+|u|)^{\frac{1}{2}}   (1+ |\ub|)^{-1}   (1+|u|)^{-\frac{1}{2}} \big( \delta^{\frac{3}{4}}  M' \big)^2 (1+ |u|) |\p Z^{k-1} \theta | \, |  \p_t Z^k \theta | \\
   & \quad + (1+ |\ub|)^{-1}   (1+|u|)^{-\frac{1}{2}}   (1+ |\ub|)^{-1}   (1+|u|)^{\frac{1}{2}} \big( \delta^{\frac{3}{4}}  M' \big)^2  | \p Z^{i_1}\phi| \, |  \p_t Z^k \theta | \\
   \lesssim & \, \big( \delta^{\frac{3}{4}}  M' \big)^2  \Big( \int_{1}^{t}  (1+ \tau)^{-2} \Vert \p Z^{k-1} \theta  \Vert_{L^2(\Sigma_\tau)}^2 \di \tau \Big)^{\frac{1}{2}}  \Big( \int_{1}^{t}  (1+ \tau)^{-2} \Vert \p_t Z^{k-1} \theta  \Vert_{L^2(\Sigma_\tau)}^2 \di \tau \Big)^{\frac{1}{2}} \\
   \lesssim & \, \big( \delta^{\frac{3}{4}}  M\big)^2 \cdot \delta^{\frac{3}{4}}  M \cdot \delta^{\frac{3}{4}}  M  \lesssim \delta^3 M^4.
  \end{align*}

  The remaining terms on the left-hand side of equation \eqref{Eq2-1} are of higher order and contribute additional decay. Consequently, the first term on the left-hand side of equation \eqref{int} is dominated and can be bounded by $\delta^3 M^4$.

  An analogous calculation shows the second term in the left-hand side of \eqref{int} is also bounded by $\delta^3 M^4$.

  For the terms $K$, we have
  \begin{align*}
     & \, \iint_{D_{I, t}} \frac{1}{2} \Big| K^{ L} [Z^k \theta]+ K^{ \Lb}[Z^k \theta] + K^{\cos^2 \theta L} [Z^k \phi]+ K^{ \cos^2 \theta \Lb}[Z^k \phi] \Big| \\
    = & \, \iint_{D_{I, t}} \Big| Q( \cos^2 \theta, Z^k \phi) \, \p_t Z^k \phi - \frac{1}{2} \p_t (\cos^2 \theta) \, |\nabla  Z^k \phi |^2  \Big| \\
    \lesssim & \,  \iint_{D_{I, t}} \big|\sin(2 \theta) \big| \, \big|\p \theta \big| \, \big| \p Z^k \phi \big|^2  \\
    \lesssim & \, \iint_{D_{I, t}} (1+ |\ub|)^{-1} \, (1+|u|)^{\frac{1}{2}} \, \delta^{\frac{3}{4}}  M \cdot (1+ |\ub|)^{-1} \, (1+|u|)^{-\frac{1}{2}} \, \delta^{\frac{3}{4}}  M \cdot | \p Z^k \phi |^2
     \lesssim  \delta^3 M^4.
  \end{align*}

  For the terms $Q$, we only need to handle the term $\frac{1}{2} \p_t g_1^{\alpha \beta} \, \p_\alpha Z^k \theta \, \p_\beta Z^k \theta $ in $Q_k^{(1)}$, as the remaining terms can be controlled in the same way.
  As
  \begin{align*}
     & \, \iint_{D_{I, t}} \Big| \frac{1}{2} \p_t g_1^{\alpha \beta} \, \p_\alpha Z^k \theta \, \p_\beta Z^k \theta \Big| \\
    \lesssim & \, \iint_{D_{I, t}} \big| \p^2 \phi \big| \, \big| \p \phi \big| \,  \big|\p Z^k \theta \big|^2 \\
    \lesssim & \, \iint_{D_{I, t}} \big((1+ |\ub|)^{-1} \, (1+|u|)^{-\frac{1}{2}} \, \delta^{\frac{3}{4}}  M\big)^2 \big|\p Z^k \theta \big|^2
     \lesssim \delta^3 M^4.
  \end{align*}

  For the boundary term $\Eo^{2}_{k, u= \delta}(\theta, \phi, \p_t)$, according to \eqref{improve}, we have
  \begin{align*}
  \Eo^{2}_{k, u= \delta}(\theta, \phi, \p_t) \lesssim & \,  \int_{1-\delta}^{\ub} \int_{S_{\delta, \ub'}} (\delta^{\frac{3}{4}} |\ub'|^{-2} M)^2 \, r^2 \, \di \ub' \, \di \sigma_{S^2} \\
   & \, \lesssim  \int_{1-\delta}^{\ub} \big( \int_{S_{\delta, \ub'}} \delta^{\frac{3}{2}} |\ub'|^{-4} M^2 \, |\ub'|^2 \, \di \sigma_{S^2} \big) \, \di \ub'
    \lesssim
   \delta^{\frac{3}{2}} M^2, \qquad k \leq N-3.
  \end{align*}

  Combining the above estimates, we deduce that
   \bes
   \Eo^2_{k}(\theta, \phi, \p_t) \lesssim \delta^{\frac{3}{2}} ( I_N(\theta_0, \theta_1, \phi_0, \phi_1))^2 + \delta^3 M^4
   \leq  C_1^2 \Big( \delta^{\frac{3}{2}} ( I_N(\theta_0, \theta_1, \phi_0, \phi_1))^2 + \delta^3 M^4 \Big).
   \ees

  Choosing $M > 2 C_1 I_N(\theta_0, \theta_1, \phi_0, \phi_1)$ and $\delta < (2 C_1 M)^{-\frac{4}{3}}$, we can obtain
  \bes
   \Eo^2_{k}(\theta, \phi, \p_t) \leq \frac{1}{2} \delta^{\frac{3}{2}} M^2.
  \ees
  At last, utilizing the standard continuity argument, the solution exists globally in the region I.

\end{proof}

\begin{remark}
The set of initial data satisfying conditions \eqref{Con1}-\eqref{Con3} are indeed non-empty and the proof is employing a method analogous to that in \cite{W-W}, we neglect the details here.
\end{remark}

\vskip 4mm

\noindent{\Large {\bf Acknowledgements.}} J.W. is supported NSFC (Grant No. 12271450). C.W. is by the Outstanding Youth Fund of Zhejiang Province (Grant No. LR22A010004), the NSFC (Grant No. 12071435).

	\vskip 4mm
\noindent{\Large {\bf Declaration.}}The authors have no relevant financial interests to disclose.
\vskip 4mm
\noindent{\Large {\bf Data availability statement.}} No new data were created or analysed in this study.
\vskip 4mm
\noindent{\Large {\bf Ethical approval.}} Not applicable.
\vskip 4mm
\noindent{\Large {\bf Consent to participate.}} Not applicable.

\end{document}